\newcommand{\email}[1]{\href{mailto:#1}{\nolinkurl{#1}}}
\renewcommand{\leq}{\ensuremath{\leqslant}}
\renewcommand{\geq}{\ensuremath{\geqslant}}
\newcommand{\minimize}[2]{\ensuremath{\underset{\substack{{#1}}}%
{\text{minimize}}\;\;#2 }}
\newcommand{\moyo}[2]{\ensuremath{\sideset{^{#2}}{}{\operatorname{}}\!\!#1}}
\newcommand{\Scal}[2]{\bigg\langle{#1}\;\bigg|\:{#2}\bigg\rangle} 
\newcommand{\scal}[2]{{\left\langle{{#1}\mid{#2}}\right\rangle}}
\newcommand{\menge}[2]{\big\{{#1}~\big |~{#2}\big\}} 
\newcommand{\Menge}[2]{\left\{{#1}~\bigg|~{#2}\right\}} 
\newcommand{\HHH}{{\ensuremath{\boldsymbol{\mathcal H}}}}
\newcommand{\KKK}{\ensuremath{\boldsymbol{\mathcal K}}}
\newcommand{\GGG}{\ensuremath{{\boldsymbol{\mathcal G}}}}
\newcommand{\HH}{\ensuremath{{\mathcal H}}}
\newcommand{\GG}{\ensuremath{{\mathcal G}}}
\newcommand{\Sum}{\ensuremath{\displaystyle\sum}}
\newcommand{\emp}{\ensuremath{{\varnothing}}}
\newcommand{\Id}{\ensuremath{\text{Id}}\,}
\newcommand{\cart}{\ensuremath{\raisebox{-0.5mm}{\mbox{\LARGE{$\times$}}}}}
\newcommand{\RR}{\ensuremath{\mathbb{R}}}
\newcommand{\RP}{\ensuremath{\left[0,+\infty\right[}}
\newcommand{\BL}{\ensuremath{\EuScript B}\,}
\newcommand{\RPP}{\ensuremath{\left]0,+\infty\right[}}
\newcommand{\RPX}{\ensuremath{\left[0,+\infty\right]}}
\newcommand{\RX}{\ensuremath{\left]-\infty,+\infty\right]}}
\newcommand{\RXX}{\ensuremath{\left[-\infty,+\infty\right]}}
\newcommand{\NN}{\ensuremath{\mathbb N}}
\newcommand{\weakly}{\ensuremath{\:\rightharpoonup\:}}
\newcommand{\exi}{\ensuremath{\exists\,}}
\newcommand{\ran}{\ensuremath{\text{\rm ran}}}
\newcommand{\zer}{\ensuremath{\text{\rm zer}\,}}
\newcommand{\pinf}{\ensuremath{{+\infty}}}
\newcommand{\dom}{\ensuremath{\text{\rm dom}\,}}
\newcommand{\prox}{\ensuremath{\text{\rm prox}}}
\newcommand{\spa}{\ensuremath{\text{\rm span}\,}}
\newcommand{\gra}{\ensuremath{\text{\rm gra}}}
\newcommand{\sri}{\ensuremath{\text{\rm sri}\,}}
\newcommand{\reli}{\ensuremath{\text{\rm ri}\,}}
\newcommand{\infconv}{\ensuremath{\mbox{\small$\,\square\,$}}}
\newcommand{\zeroun}{\ensuremath{\left]0,1\right[}}
\newtheorem{theorem}{Theorem}[section]
\newtheorem{lemma}[theorem]{Lemma}
\newtheorem{proposition}[theorem]{Proposition}
\theoremstyle{plain}{\theorembodyfont{\rmfamily}%
}
\theoremstyle{plain}{\theorembodyfont{\rmfamily}%
\newtheorem{example}[theorem]{Example}}
\theoremstyle{plain}{\theorembodyfont{\rmfamily}%
\newtheorem{remark}[theorem]{Remark}}
\theoremstyle{plain}{\theorembodyfont{\rmfamily}%
}
\theoremstyle{plain}{\theorembodyfont{\rmfamily}%
}
\theoremstyle{plain}{\theorembodyfont{\rmfamily}%
}
\theoremstyle{plain}{\theorembodyfont{\rmfamily}%
\newtheorem{problem}[theorem]{Problem}}
\numberwithin{equation}{section}
\begin{document}

\title{\sffamily\huge Systems of Structured Monotone Inclusions:
Duality, Algorithms, and Applications\footnote{Contact author: 
P. L. Combettes, {\ttfamily plc@math.jussieu.fr},
phone: +33 1 4427 6319, fax: +33 1 4427 7200.}}

\author{Patrick L. Combettes\\[5mm]
\small UPMC Universit\'e Paris 06\\
\small Laboratoire Jacques-Louis Lions -- UMR CNRS 7598\\
\small 75005 Paris, France\\
\small {plc@math.jussieu.fr}\\[4mm]
}

\date{~}

\maketitle

\vskip 8mm

\begin{abstract} \noindent
A general primal-dual splitting algorithm for solving systems of 
structured coupled monotone inclusions in Hilbert spaces is 
introduced and its asymptotic behavior is analyzed. 
Each inclusion in the primal system 
features compositions with linear operators, parallel sums, and 
Lipschitzian operators. All the operators involved in this 
structured model are used separately in the proposed algorithm, 
most steps of which can be executed in parallel. This provides 
a flexible solution method applicable to a variety of problems 
beyond the reach of the state-of-the-art. Several applications are 
discussed to illustrate this point.
\end{abstract} 

{\bfseries Keywords} 
convex minimization,
coupled system,
infimal convolution,
monotone inclusion,
monotone operator,
operator splitting,
parallel algorithm,
structured minimization problem

{\bfseries Mathematics Subject Classifications (2010)} 
Primary 47H05; Secondary 65K05, 90C25.

\maketitle

\section{Introduction}

Traditional monotone operator splitting techniques 
\cite{Livre1,Opti04,Ecks92,Facc03,Glow89,Lion79,Merc79,%
Spin83,Tsen91,Tsen00} have their roots in matrix decomposition 
methods in numerical analysis \cite{Doug56,Varg00} and in nonlinear 
methods for solving optimization and variational inequality 
problems \cite{Baku74,Brez67,Korp76,Levi66,Sibo70}.
These methods are designed to solve inclusions of the 
type $0\in B_1x+B_2x$, where $B_1$ and $B_2$ are maximally monotone
operators acting on a Hilbert space $\HH$. Extensions to 
sums of the type $0\in\sum_{k=1}^KB_kx$ are typically handled 
via reformulations in product spaces \cite{Livre1,Spin83}. 
In recent years, new splitting algorithms have emerged for 
problems involving more complex models featuring compositions 
with linear operators \cite{Siop11} and parallel sums 
\cite{Svva12,Bang12} (see \eqref{e:parasum}). 
These algorithms rely on
reformulations of the inclusions as two-operator problems
in a primal-dual space, in which the splitting is performed via 
an existing method. This construct makes it possible to activate
separately each of the operators present in
the model, and it leads to flexible algorithms implementable on 
parallel architectures. In the present paper, we pursue this
strategy towards more sophisticated models involving systems of 
structured coupled inclusions in duality. The primal-dual
problem under consideration is the following.

\begin{problem}
\label{prob:1}
Let $m$ and $K$ be strictly positive integers, let
$(\HH_i)_{1\leq i\leq m}$ and $(\GG_k)_{1\leq k\leq K}$ be real 
Hilbert spaces, let $(\mu_i)_{1\leq i\leq m}\in\RP^m$, and let 
$(\nu_k)_{1\leq i\leq K}\in\RP^K$. For every $i\in\{1,\ldots,m\}$ 
and $k\in\{1,\ldots,K\}$, let $C_i\colon\HH_i\to\HH_i$ be 
monotone and $\mu_i$-Lipschitzian, let 
$A_i\colon\HH_i\to 2^{\HH_i}$ and $B_k\colon\GG_k\to 2^{\GG_k}$ 
be maximally monotone, let $D_k\colon\GG_k\to 2^{\GG_k}$ be 
maximally monotone and such that $D_k^{-1}\colon\GG_k\to\GG_k$ is 
$\nu_k$-Lipschitzian, let $z_i\in\HH_i$, let $r_k\in\GG_k$, 
and let $L_{ki}\in\BL(\HH_i,\GG_k)$. It is assumed that 
\begin{equation}
\label{e:5h8Njiq-11a}
\beta=\text{max}\Big\{\underset{1\leq i\leq m}{\text{max}}\mu_i,
\underset{1\leq k\leq K}{\text{max}}\nu_k\Big\}+
\sqrt{\lambda}>0,
\quad\text{where}\quad
\lambda\in\left[\underset{\sum_{i=1}^m\|x_i\|^2\leq 1}{\text{sup}}
\sum_{k=1}^K\bigg\|\sum_{i=1}^mL_{ki}x_i\bigg\|^2,\pinf\right[,
\end{equation}
and that the system of coupled inclusions
\begin{multline}
\label{e:IJ834hj8fr-24p}
\text{find}\;\;\overline{x_1}\in\HH_1,\ldots,\overline{x_m}\in\HH_m
\;\;\text{such that}\\
\begin{cases}
z_1&\!\!\!\in A_1\overline{x_1}+\Sum_{k=1}^KL_{k1}^*
\bigg((B_k\infconv
D_k)\bigg(\Sum_{i=1}^mL_{ki}\overline{x_i}-r_k\bigg)\bigg)
+C_1\overline{x_1}\\
&\!\!\!\;\vdots\\[-3mm]
z_m&\!\!\!\in A_m\overline{x_m}+\Sum_{k=1}^KL_{km}^*
\bigg((B_k\infconv D_k)\bigg(\Sum_{i=1}^m
L_{ki}\overline{x_i}-r_k\bigg)\bigg)+C_m\overline{x_m}
\end{cases}
\end{multline}
possesses at least one solution. Solve \eqref{e:IJ834hj8fr-24p} 
together with the dual problem
\begin{multline}
\label{e:IJ834hj8fr-24d}
\text{find}\;\;\overline{v_1}\in\GG_1,\ldots,\overline{v_K}
\in\GG_K\;\;\text{such that}\\
\begin{cases}
-r_1&\!\!\!
\in-\Sum_{i=1}^mL_{1i}\big(A_i+C_i\big)^{-1}
\bigg(z_i-\Sum_{k=1}^KL_{ki}^*\overline{v_k}\bigg)
+B_1^{-1}\overline{v_1}+D_1^{-1}\overline{v_1}\\
&\!\!\!\;\vdots\\[-3mm]
-r_K&\!\!\!\in-\Sum_{i=1}^mL_{Ki}\big(A_i+C_i\big)^{-1}
\bigg(z_i-\Sum_{k=1}^KL_{ki}^*\overline{v_k}\bigg)
+B_K^{-1}\overline{v_K}+D_K^{-1}\overline{v_K}.
\end{cases}
\end{multline}
\end{problem}

The primal system \eqref{e:IJ834hj8fr-24p} captures a broad class of 
problems in nonlinear analysis in which $m$ variables $x_1$, 
\dots, $x_m$ interact. The $i$th inclusion in 
\eqref{e:IJ834hj8fr-24p} features two operators $A_i$ and $C_i$
which model some abstract utility of the variable $x_i$, while 
the operator $(B_k)_{1\leq k\leq K}$, $(D_k)_{1\leq k\leq K}$, and 
$(L_{ki})_{\substack{1\leq i\leq m\\ 1\leq k\leq K}}$ 
model the interaction between $x_i$ and the remaining variables. 
One of the simplest realizations of \eqref{e:IJ834hj8fr-24p} is 
the problem considered in \cite{Reic05}, namely
\begin{equation}
\label{e:prob63}
\text{find}\;\;\overline{x_1}\in\HH,\, \overline{x_2}
\in\HH\;\;\text{such that}\quad
\begin{cases}
0\in A_1\overline{x_1}+\overline{x_1}-\overline{x_2}\\
0\in A_2\overline{x_2}-\overline{x_1}+\overline{x_2},
\end{cases}
\end{equation}
where $(\HH,\|\cdot\|)$ is a real Hilbert space, and where 
$A_1$ and $A_2$ are maximally monotone operators acting on 
$\HH$. In particular, if $A_1=\partial f_1$ and 
$A_2=\partial f_2$ are the subdifferentials of proper lower 
semicontinuous convex functions $f_1$ and $f_2$ from $\HH$ 
to $\RX$, \eqref{e:prob63} becomes
\begin{equation}
\label{e:prob62}
\minimize{x_1\in\HH,\,x_2\in\HH}{f_1(x_1)+f_2(x_2)+
\frac{1}{2}\|x_1-x_2\|^2}.
\end{equation}
This formulation arises in areas such as optimization 
\cite{Acke80}, the cognitive sciences \cite{Atto07}, image 
recovery \cite{Smms05}, signal synthesis \cite{Gold85}, best 
approximation \cite{Baus04}, and mechanics \cite{Merc80}.
In \cite{Sico10}, we considered the extension of \eqref{e:prob62} 
which amounts to setting in Problem~\ref{prob:1}, for every 
$i\in\{1,\ldots,m\}$ and $k\in\{1,\ldots,K\}$, 
$A_i=\partial f_i$, $C_i=0$, and $B_k=\nabla g_k$,
where $f_i\colon\HH\to\RX$ is a proper lower semicontinuous
convex function and $g_k\colon\GG_k\to\RR$ is convex and
differentiable with a Lipschitzian gradient. This leads to the 
minimization problem
\begin{equation}
\label{e:genna07-4}
\minimize{x_1\in\HH_1,\ldots,\,x_m\in\HH_m}{\sum_{i=1}^mf_i(x_i)
+\sum_{k=1}^K g_k\bigg(\sum_{i=1}^mL_{ki}x_i\bigg)},
\end{equation}
which has numerous applications in signal processing, machine
learning, image recovery, partial differential equations, and game 
theory; see \cite{Atto08,Bach12,Nmtm09,Jmiv11,Fran12,Gill07,Star05}
and the references therein.
In the case when $m=1$ in Problem~\ref{prob:1}, and under certain
restrictions on the operators involved, primal-dual algorithms 
have been proposed recently in \cite{Siop11,Svva12,Bang12}.
On the other hand, a primal algorithm was proposed in \cite{Sico10}
for solving systems of inclusions of type \eqref{e:IJ834hj8fr-24p} 
in which the operators $(C_i)_{1\leq i\leq m}$ and 
$(D_k^{-1})_{1\leq k\leq K}$ are zero, and the 
coupling operators $(B_k)_{1\leq k\leq K}$ are restricted to be 
single-valued and to satisfy jointly a cocoercivity property. 

The goal of the present paper is to develop a flexible algorithm
to solve Problem~\ref{prob:1} without the restrictions imposed
by current methods. In particular, no additional hypotheses will
be placed neither on the coupling operators $(B_k)_{1\leq k\leq K}$
and $(D_k)_{1\leq k\leq K}$, nor on the number $m$ of variables. 
In the proposed parallel splitting algorithm, the structure of 
the problem is fully exploited to the 
extent that the operators are all used individually, either 
explicitly if they are single-valued, or by means of their 
resolvent if they are set-valued. The main algorithm 
is introduced and analyzed in Section~\ref{sec:3}. 
The remaining sections are devoted to applications to problems
which are not explicitly solvable via existing techniques. 
Thus, in Section~\ref{sec:4}, we discuss applications to univariate
inclusion problems featuring general parallel sums, 
in the sense that the operators $(D_k)_{1\leq k\leq K}$ need
not have Lipschitzian inverses. In Section~\ref{sec:5}, we
apply this framework to the regularization of inconsistent common
zero problems. Finally, Sections~\ref{sec:6} and \ref{sec:7} address,
respectively, applications to multivariate and univariate
structured convex minimization problems.

\noindent
{\bfseries Notation.}
We denote the scalar product of a Hilbert space by 
$\scal{\cdot}{\cdot}$ and the associated norm by $\|\cdot\|$.
The symbols $\weakly$ and $\to$ denote, respectively, weak and 
strong convergence, and $\Id$ denotes the identity operator. 
Let $\HH$ and $\GG$ be real Hilbert spaces
and let $2^{\HH}$ be the power set of $\HH$.
The space of bounded linear operators from $\HH$ to $\GG$ is 
denoted by $\BL(\HH,\GG)$. Let $A\colon\HH\to 2^{\HH}$.
We denote by $\ran A=\menge{u\in\HH}{(\exi x\in\HH)\;u\in Ax}$ 
the range $A$, by 
$\dom A=\menge{x\in\HH}{Ax\neq\emp}$ the domain of $A$, by 
$\zer A=\menge{x\in\HH}{0\in Ax}$ the set of zeros of $A$, 
by $\gra A=\menge{(x,u)\in\HH\times\HH}{u\in Ax}$ the graph of 
$A$, and by $A^{-1}$ the inverse of $A$, i.e., the operator 
with graph
$\menge{(u,x)\in\HH\times\HH}{u\in Ax}$. The resolvent of $A$ is
$J_A=(\Id+A)^{-1}$. Moreover, $A$ is declared monotone if
\begin{equation}
(\forall (x,u)\in\gra A)(\forall (y,v)\in\gra A)\quad
\scal{x-y}{u-v}\geq 0,
\end{equation}
and maximally monotone if there exists no monotone operator 
$B\colon\HH\to 2^{\HH}$ such that $\gra A\subset\gra B\neq\gra A$.
In this case, $J_A$ is a nonexpansive operator defined everywhere
on $\HH$. Furthermore, $A$ is uniformly monotone at 
$x\in\dom A$ if there exists an increasing function 
$\phi\colon\RP\to\RPX$ that vanishes only at $0$ such that
\begin{equation}
\label{e:genova}
(\forall u\in Ax)(\forall (y,v)\in\gra A)\quad
\scal{x-y}{u-v}\geq\phi(\|x-y\|),
\end{equation}
and $A$ is couniformly monotone at $u\in\ran A$ if $A^{-1}$ 
is uniformly monotone at $u$.
The parallel sum of $A$ and $B\colon\HH\to 2^{\HH}$ is
\begin{equation}
\label{e:parasum}
A\infconv B=(A^{-1}+B^{-1})^{-1}.
\end{equation}
The infimal convolution of two functions $g$ and 
$\ell$ from $\HH$ to $\RX$ is
\begin{equation}
\label{e:infconv1}
g\infconv\ell\colon\HH\to\RXX\colon x\mapsto
\inf_{y\in\HH}\big(g(y)+\ell(x-y)\big).
\end{equation}
We denote by $\Gamma_0(\HH)$ the class of lower semicontinuous 
convex functions $f\colon\HH\to\RX$ such that
$\dom f=\menge{x\in\HH}{f(x)<\pinf}\neq\emp$. Let
$f\in\Gamma_0(\HH)$. The conjugate of $f$ is 
$\Gamma_0(\HH)\ni f^*\colon u\mapsto
\sup_{x\in\HH}(\scal{x}{u}-f(x))$, and $f$ is 
uniformly convex at $x\in\dom f$ if there exists an increasing 
function $\phi\colon\RP\to\RPX$ that vanishes only at $0$ such that
\begin{equation}
\label{e:Unifconvex}
(\forall y\in\dom f)(\forall\alpha\in\zeroun)\quad
f(\alpha x+(1-\alpha)y)+\alpha(1-\alpha)\phi(\|x-y\|)\leq
\alpha f(x)+(1-\alpha)f(y).
\end{equation}
For every $x\in\HH$, $f+\|x-\cdot\|^2/2$ possesses a 
unique minimizer, which is denoted by $\prox_fx$. We have 
\begin{equation}
\label{e:prox2}
\prox_f=J_{\partial f},\quad\text{where}\quad
\partial f\colon\HH\to 2^{\HH}\colon x\mapsto
\menge{u\in\HH}{(\forall y\in\HH)\;\:\scal{y-x}{u}+f(x)\leq f(y)} 
\end{equation}
is the subdifferential of $f$. Let $C$ be a convex subset of $\HH$.
The indicator function of $C$ is denoted by $\iota_C$ and the
distance function to $C$ by $d_C$.
The relative interior [respectively, the strong relative 
interior] of $C$, i.e., the set of points $x\in C$ 
such that the cone generated by $-x+C$ is a vector subspace 
[respectively, closed vector subspace] of $\HH$, by $\reli C$
[respectively, $\sri C$]. See \cite{Livre1,Zali02} for
background on convex analysis and monotone operators. 

\section{General algorithm}
\label{sec:3}

We start with three preliminary results. The first one 
is an error-tolerant version of a forward-backward-forward 
splitting algorithm originally proposed by Tseng 
\cite[Theorem~3.4(b)]{Tsen00}.

\begin{lemma}{\rm \cite[Theorem~2.5(i)--(ii)]{Siop11}}
\label{l:broome-st2010-10-27}
Let $\KKK$ be a real Hilbert space, let 
$\boldsymbol{P}\colon\KKK\to 2^{\KKK}$ be maximally monotone,
and let $\boldsymbol{Q}\colon\KKK\to\KKK$ be monotone and 
$\chi$-Lipschitzian for some $\chi\in\RPP$.
Suppose that $\zer(\boldsymbol{P}+\boldsymbol{Q})\neq\emp$. Let 
$(\boldsymbol{a}_n)_{n\in\NN}$, $(\boldsymbol{b}_n)_{n\in\NN}$,
and $(\boldsymbol{c}_n)_{n\in\NN}$ be absolutely summable 
sequences in $\KKK$, let $\boldsymbol{w}_0\in\KKK$, let 
$\varepsilon\in\left]0,1/(\chi+1)\right[$,
let $(\gamma_n)_{n\in\NN}$ be a sequence in 
$[\varepsilon,(1-\varepsilon)/\chi]$, and set
\begin{equation}
\label{e:rio2010-10-11u}
\begin{array}{l}
\text{For}\;n=0,1,\ldots\\
\left\lfloor
\begin{array}{l}
\boldsymbol{s}_n=\boldsymbol{w}_n-
\gamma_n(\boldsymbol{Q}\boldsymbol{w}_n+\boldsymbol{a}_n)\\
\boldsymbol{p}_n=J_{\gamma_n\boldsymbol{P}}\,\boldsymbol{s}_n
+\boldsymbol{b}_n\\
\boldsymbol{q}_n=\boldsymbol{p}_n-\gamma_n(\boldsymbol{Q}
\boldsymbol{p}_n+\boldsymbol{c}_n)\\
\boldsymbol{w}_{n+1}=\boldsymbol{w}_n-
\boldsymbol{s}_n+\boldsymbol{q}_n.
\end{array}
\right.\\[2mm]
\end{array}
\end{equation}
Then $\sum_{n\in\NN}\|\boldsymbol{w}_n-\boldsymbol{p}_n\|^2<\pinf$ 
and there exists 
$\boldsymbol{\overline{w}}\in\zer(\boldsymbol{P}+\boldsymbol{Q})$
such that $\boldsymbol{w}_n\weakly\boldsymbol{\overline{w}}$ and 
$\boldsymbol{p}_n\weakly\boldsymbol{\overline{w}}$. 
\end{lemma}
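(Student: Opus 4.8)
The result is Tseng's error-tolerant forward-backward-forward theorem, which I would establish by a quasi-Fej\'er analysis. Throughout, fix a reference point $\boldsymbol{\overline{w}}\in\zer(\boldsymbol{P}+\boldsymbol{Q})$, so that $-\boldsymbol{Q}\boldsymbol{\overline{w}}\in\boldsymbol{P}\boldsymbol{\overline{w}}$; equivalently, $\boldsymbol{\overline{w}}\in\Fix(J_{\gamma\boldsymbol{P}}\circ(\Id-\gamma\boldsymbol{Q}))$ for every $\gamma\in\RPP$. Observe also that, since $\boldsymbol{Q}$ is monotone, single-valued, and continuous with full domain, $\boldsymbol{P}+\boldsymbol{Q}$ is maximally monotone.

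First I would unwind the recursion to expose the corrector step. From $\boldsymbol{w}_n-\boldsymbol{s}_n=\gamma_n(\boldsymbol{Q}\boldsymbol{w}_n+\boldsymbol{a}_n)$ and the definition of $\boldsymbol{q}_n$, the last line gives $\boldsymbol{w}_{n+1}-\boldsymbol{p}_n=\gamma_n(\boldsymbol{Q}\boldsymbol{w}_n-\boldsymbol{Q}\boldsymbol{p}_n)+\gamma_n(\boldsymbol{a}_n-\boldsymbol{c}_n)$, so that $\|\boldsymbol{w}_{n+1}-\boldsymbol{p}_n\|\leq\gamma_n\chi\|\boldsymbol{w}_n-\boldsymbol{p}_n\|+\gamma_n(\|\boldsymbol{a}_n\|+\|\boldsymbol{c}_n\|)$ by the $\chi$-Lipschitz continuity of $\boldsymbol{Q}$. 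Likewise, the backward step yields the inclusion $\gamma_n^{-1}(\boldsymbol{s}_n-\boldsymbol{p}_n+\boldsymbol{b}_n)\in\boldsymbol{P}(\boldsymbol{p}_n-\boldsymbol{b}_n)$.

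The heart of the argument is the energy estimate. Testing the monotonicity of $\boldsymbol{P}$ on the pair consisting of this backward inclusion at $\boldsymbol{p}_n-\boldsymbol{b}_n$ and $-\boldsymbol{Q}\boldsymbol{\overline{w}}\in\boldsymbol{P}\boldsymbol{\overline{w}}$, then adding the monotonicity of $\boldsymbol{Q}$ and expanding $\|\boldsymbol{w}_{n+1}-\boldsymbol{\overline{w}}\|^2$ through the decomposition $\boldsymbol{w}_{n+1}-\boldsymbol{\overline{w}}=(\boldsymbol{w}_n-\boldsymbol{\overline{w}})-(\boldsymbol{w}_n-\boldsymbol{p}_n)+(\boldsymbol{w}_{n+1}-\boldsymbol{p}_n)$, I expect the noiseless terms to collapse exactly to Tseng's inequality and the error terms to survive as a controlled remainder, giving
\begin{equation}
\|\boldsymbol{w}_{n+1}-\boldsymbol{\overline{w}}\|^2\leq\|\boldsymbol{w}_n-\boldsymbol{\overline{w}}\|^2-\big(1-\gamma_n^2\chi^2\big)\|\boldsymbol{w}_n-\boldsymbol{p}_n\|^2+\eta_n,
\end{equation}
where $\eta_n$ is a combination of $\|\boldsymbol{a}_n\|$, $\|\boldsymbol{b}_n\|$, $\|\boldsymbol{c}_n\|$ weighted by $\|\boldsymbol{w}_n-\boldsymbol{\overline{w}}\|$ and $\|\boldsymbol{w}_n-\boldsymbol{p}_n\|$. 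Since $\gamma_n\leq(1-\varepsilon)/\chi$, the factor $1-\gamma_n^2\chi^2$ is bounded below by $\varepsilon(2-\varepsilon)>0$.

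Finally I would invoke the theory of quasi-Fej\'er monotone sequences. A preliminary bound shows $(\boldsymbol{w}_n)_{n\in\NN}$ bounded, whence $\sum_{n\in\NN}\eta_n<\pinf$; the displayed inequality then forces both the convergence of $(\|\boldsymbol{w}_n-\boldsymbol{\overline{w}}\|)_{n\in\NN}$ for every $\boldsymbol{\overline{w}}\in\zer(\boldsymbol{P}+\boldsymbol{Q})$ and $\sum_{n\in\NN}\|\boldsymbol{w}_n-\boldsymbol{p}_n\|^2<\pinf$. In particular $\boldsymbol{w}_n-\boldsymbol{p}_n\to 0$, so $(\boldsymbol{w}_n)_{n\in\NN}$ and $(\boldsymbol{p}_n)_{n\in\NN}$ have the same weak sequential cluster points. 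Given such a point $\boldsymbol{w}$, say $\boldsymbol{p}_{k_n}-\boldsymbol{b}_{k_n}\weakly\boldsymbol{w}$, I would rewrite the backward inclusion as $\gamma_{k_n}^{-1}(\boldsymbol{w}_{k_n}-\boldsymbol{p}_{k_n}+\boldsymbol{b}_{k_n})-\boldsymbol{a}_{k_n}+\big(\boldsymbol{Q}(\boldsymbol{p}_{k_n}-\boldsymbol{b}_{k_n})-\boldsymbol{Q}\boldsymbol{w}_{k_n}\big)\in(\boldsymbol{P}+\boldsymbol{Q})(\boldsymbol{p}_{k_n}-\boldsymbol{b}_{k_n})$ and let $n\to\infty$: the left-hand side tends to $0$ strongly, since $\gamma_{k_n}^{-1}\leq 1/\varepsilon$, the error terms vanish, and $\boldsymbol{Q}(\boldsymbol{p}_{k_n}-\boldsymbol{b}_{k_n})-\boldsymbol{Q}\boldsymbol{w}_{k_n}\to 0$ by Lipschitz continuity. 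The sequential weak-strong closedness of $\gra(\boldsymbol{P}+\boldsymbol{Q})$ then yields $0\in(\boldsymbol{P}+\boldsymbol{Q})\boldsymbol{w}$, so every weak cluster point is a zero, and weak convergence of the whole sequences follows. The main obstacle is the energy estimate: organizing the cross terms so that the $\gamma_n^2\chi^2$ coefficient emerges cleanly and the perturbations assemble into a summable $\eta_n$ requires careful bookkeeping, and the boundedness needed to sum the $\eta_n$ must be secured first within the quasi-Fej\'er framework.
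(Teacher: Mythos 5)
The paper does not prove this lemma: it is quoted verbatim from \cite[Theorem~2.5(i)--(ii)]{Siop11}, and your outline reconstructs exactly the standard argument behind that result --- the quasi-Fej\'er analysis of Tseng's forward--backward--forward iteration with summable errors, including the correct unwinding of the update, the correct backward inclusion $\gamma_n^{-1}(\boldsymbol{s}_n-\boldsymbol{p}_n+\boldsymbol{b}_n)\in\boldsymbol{P}(\boldsymbol{p}_n-\boldsymbol{b}_n)$, the energy inequality with coefficient $1-\gamma_n^2\chi^2\geq\varepsilon(2-\varepsilon)$, and the weak--strong closedness of $\gra(\boldsymbol{P}+\boldsymbol{Q})$ to identify cluster points. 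Your plan is sound and matches the cited proof; the only part left unexecuted is the central energy estimate itself (you assert rather than derive it), and the summation of the $\eta_n$ must indeed be run through the variable-metric-free quasi-Fej\'er lemma that tolerates error terms proportional to $\|\boldsymbol{w}_n-\boldsymbol{\overline{w}}\|$, as you note.
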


\begin{lemma}{\rm \cite[Proposition~23.15(ii) and 23.18]{Livre1}}
\label{l:4}
Let $\HH$ be a real Hilbert space, let $A\colon\HH\to 2^{\HH}$ 
be a maximally monotone operator, let $\gamma\in\RPP$, and let 
$x$ and $r$ be in $\HH$. Then $J_{\gamma(r+A^{-1})}x=x-\gamma
(r+J_{\gamma^{-1}A}(\gamma^{-1}x-r))$.
\end{lemma}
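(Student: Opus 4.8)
The plan is to verify the identity directly from the definition of the resolvent, without invoking any intermediate formula. First I would record that $A^{-1}$ is maximally monotone, and that translating its values by the fixed vector $r$ and rescaling by $\gamma>0$ preserve maximal monotonicity; hence $\gamma(r+A^{-1})$ is maximally monotone and, as recalled above, its resolvent $J_{\gamma(r+A^{-1})}$ is single-valued and defined on all of $\HH$. In particular $p:=J_{\gamma(r+A^{-1})}x$ is a well-defined point of $\HH$.

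Next I would unfold the defining inclusion of the resolvent. By definition of $J_{\gamma(r+A^{-1})}$,
\begin{equation}
p=J_{\gamma(r+A^{-1})}x\quad\Longleftrightarrow\quad x\in p+\gamma\big(r+A^{-1}p\big)=p+\gamma r+\gamma A^{-1}p,
\end{equation}
which rearranges to $\gamma^{-1}(x-p)-r\in A^{-1}p$, i.e., using the defining equivalence $u\in A^{-1}w\Leftrightarrow w\in Au$,
\begin{equation}
p\in A\big(\gamma^{-1}(x-p)-r\big).
\end{equation}

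The key step is then to introduce the auxiliary point $q:=\gamma^{-1}(x-p)-r$. Solving this relation for $p$ gives $p=x-\gamma(r+q)$, so the asserted identity is equivalent to the single claim $q=J_{\gamma^{-1}A}(\gamma^{-1}x-r)$. To establish this I would verify the inclusion that characterizes $J_{\gamma^{-1}A}$: since $p\in Aq$ by the previous step, we have $\gamma^{-1}p\in\gamma^{-1}Aq$, while the definition of $q$ yields $q+\gamma^{-1}p=\gamma^{-1}x-r$; hence $\gamma^{-1}x-r\in q+\gamma^{-1}Aq=(\Id+\gamma^{-1}A)q$, which is exactly $q=J_{\gamma^{-1}A}(\gamma^{-1}x-r)$. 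Substituting back into $p=x-\gamma(r+q)$ delivers the claimed formula.

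I do not expect a genuine obstacle here: the argument is a direct computation, and every implication above is reversible, so the chain both pins down the form of $p$ and is consistent with the existence guaranteed in the first step. The only point requiring care is the bookkeeping of the inverse-operator relation $u\in Aq\Leftrightarrow q\in A^{-1}u$ together with the correct placement of the scalars $\gamma$ and $\gamma^{-1}$; a factor slip there is the sole realistic source of error. Alternatively, one could assemble the result from two standard facts---that translating the operator by $r$ turns $J_{\gamma(r+A^{-1})}x$ into $J_{\gamma A^{-1}}(x-\gamma r)$, and that the inverse-resolvent (Moreau-type) identity gives $J_{\gamma A^{-1}}y=y-\gamma J_{\gamma^{-1}A}(\gamma^{-1}y)$---but the direct verification above is shorter and self-contained.
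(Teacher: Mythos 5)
Your proof is correct. The paper itself gives no argument for this lemma: it is stated with a citation to two facts from the Bauschke--Combettes book, namely the translation property $J_{\gamma(r+B)}x=J_{\gamma B}(x-\gamma r)$ and the inverse-resolvent identity $J_{\gamma A^{-1}}y=y-\gamma J_{\gamma^{-1}A}(\gamma^{-1}y)$, whose composition (with $B=A^{-1}$ and $y=x-\gamma r$) yields the formula --- this is exactly the ``alternative'' route you sketch in your last sentence. Your primary argument instead unfolds the definition of the resolvent directly: setting $p=J_{\gamma(r+A^{-1})}x$, passing to $p\in A\big(\gamma^{-1}(x-p)-r\big)$ via the inverse relation, and identifying $q=\gamma^{-1}(x-p)-r$ as $J_{\gamma^{-1}A}(\gamma^{-1}x-r)$ through the computation $q+\gamma^{-1}p=\gamma^{-1}x-r$ with $\gamma^{-1}p\in\gamma^{-1}Aq$. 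I checked the bookkeeping of the scalars and of the inverse relation, and it is right; your preliminary remark that $\gamma(r+A^{-1})$ is maximally monotone (so that $p$ is well defined for every $x\in\HH$) is also the correct justification to record. What the direct verification buys is self-containedness --- one short reversible chain of equivalences with no external lemmas; what the citation route buys is modularity, since each of the two ingredients is a standard, independently useful identity. Either is acceptable here.
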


\begin{lemma}{\rm \cite[Proposition~2.8]{Siop11}}
\label{l:1}
Let $\HHH$ and $\GGG$ be two real Hilbert spaces, let 
$\boldsymbol{E}\colon\HHH\to 2^{\HHH}$ and 
$\boldsymbol{F}\colon\GGG\to 2^{\GGG}$ be maximally monotone, 
let $\boldsymbol{L}\in\BL(\HHH,\GGG)$, let 
$\boldsymbol{z}\in\HHH$, and let $\boldsymbol{r}\in\GGG$.
Set $\KKK=\HHH\oplus\GGG$,
\begin{equation}
\label{e:r8reXy-10-31z}
\begin{cases}
\boldsymbol{M}\colon\KKK\to 2^{\KKK}\colon
(\boldsymbol{x},\boldsymbol{v})\mapsto
(-\boldsymbol{z}+\boldsymbol{E}\boldsymbol{x})
\times(\boldsymbol{r}+\boldsymbol{F}^{-1}\boldsymbol{v})\\
\boldsymbol{S}\colon\KKK\to\KKK\colon(\boldsymbol{x},
\boldsymbol{v})\mapsto(\boldsymbol{L}^*\boldsymbol{v},
-\boldsymbol{L}\boldsymbol{x}),
\end{cases}
\end{equation}
and
\begin{equation}
\label{e:r8reXy-10-31Z}
\begin{cases}
\boldsymbol{\mathfrak P}=\menge{\boldsymbol{x}\in\HHH}
{\boldsymbol{z}\in\boldsymbol{E}\boldsymbol{x}+
\boldsymbol{L}^*(\boldsymbol{F}
(\boldsymbol{L}\boldsymbol{x}-\boldsymbol{r}))}\\
\boldsymbol{\mathfrak D}=\menge{\boldsymbol{v}\in\GGG}
{-\boldsymbol{r}\in -\boldsymbol{L}(\boldsymbol{E}^{-1}
(\boldsymbol{z}-\boldsymbol{L}^*\boldsymbol{v}))+
\boldsymbol{F}^{-1}\boldsymbol{v}}.
\end{cases}
\end{equation}
Then $\zer(\boldsymbol{M}+\boldsymbol{S})$ is a closed convex 
subset of $\boldsymbol{\mathfrak P}\times\boldsymbol{\mathfrak D}$,
and $\boldsymbol{\mathfrak P}\neq\emp$ $\Leftrightarrow$
$\zer(\boldsymbol{M}+\boldsymbol{S})\neq\emp$
$\Leftrightarrow$ $\boldsymbol{\mathfrak D}\neq\emp$.
\end{lemma}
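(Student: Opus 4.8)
The plan is to recognize that $\zer(\boldsymbol{M}+\boldsymbol{S})$ is exactly the solution set of the coupled two-operator system obtained by expanding the inclusion $\boldsymbol{0}\in\boldsymbol{M}(\boldsymbol{x},\boldsymbol{v})+\boldsymbol{S}(\boldsymbol{x},\boldsymbol{v})$ coordinatewise, and that $\boldsymbol{\mathfrak P}$ and $\boldsymbol{\mathfrak D}$ are its canonical projections onto $\HHH$ and $\GGG$. First I would unfold the definitions in \eqref{e:r8reXy-10-31z}: a pair $(\boldsymbol{x},\boldsymbol{v})\in\KKK$ lies in $\zer(\boldsymbol{M}+\boldsymbol{S})$ if and only if $(\boldsymbol{0},\boldsymbol{0})\in(-\boldsymbol{z}+\boldsymbol{E}\boldsymbol{x}+\boldsymbol{L}^*\boldsymbol{v})\times(\boldsymbol{r}+\boldsymbol{F}^{-1}\boldsymbol{v}-\boldsymbol{L}\boldsymbol{x})$, that is,
\begin{equation}
\boldsymbol{z}-\boldsymbol{L}^*\boldsymbol{v}\in\boldsymbol{E}\boldsymbol{x}
\quad\text{and}\quad
\boldsymbol{L}\boldsymbol{x}-\boldsymbol{r}\in\boldsymbol{F}^{-1}\boldsymbol{v}.
\end{equation}
Call this coupled system $(\star)$; it is the single structural identity that drives the entire argument.

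Next I would read $\boldsymbol{\mathfrak P}$ and $\boldsymbol{\mathfrak D}$ off $(\star)$ by eliminating one variable at a time. Using the equivalence $\boldsymbol{v}\in\boldsymbol{F}(\boldsymbol{L}\boldsymbol{x}-\boldsymbol{r})\Leftrightarrow\boldsymbol{L}\boldsymbol{x}-\boldsymbol{r}\in\boldsymbol{F}^{-1}\boldsymbol{v}$, the condition $\boldsymbol{x}\in\boldsymbol{\mathfrak P}$ in \eqref{e:r8reXy-10-31Z} is precisely the existence of some $\boldsymbol{v}\in\GGG$ such that $(\boldsymbol{x},\boldsymbol{v})$ satisfies $(\star)$; symmetrically, using $\boldsymbol{x}\in\boldsymbol{E}^{-1}(\boldsymbol{z}-\boldsymbol{L}^*\boldsymbol{v})\Leftrightarrow\boldsymbol{z}-\boldsymbol{L}^*\boldsymbol{v}\in\boldsymbol{E}\boldsymbol{x}$, the condition $\boldsymbol{v}\in\boldsymbol{\mathfrak D}$ is precisely the existence of some $\boldsymbol{x}\in\HHH$ such that $(\boldsymbol{x},\boldsymbol{v})$ satisfies $(\star)$. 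Hence $\boldsymbol{\mathfrak P}$ and $\boldsymbol{\mathfrak D}$ are the projections of $\zer(\boldsymbol{M}+\boldsymbol{S})$ onto $\HHH$ and $\GGG$. The inclusion $\zer(\boldsymbol{M}+\boldsymbol{S})\subset\boldsymbol{\mathfrak P}\times\boldsymbol{\mathfrak D}$ is then immediate, since any $(\boldsymbol{x},\boldsymbol{v})\in\zer(\boldsymbol{M}+\boldsymbol{S})$ witnesses both its first coordinate belonging to $\boldsymbol{\mathfrak P}$ and its second belonging to $\boldsymbol{\mathfrak D}$; and the three-fold nonemptiness equivalence $\boldsymbol{\mathfrak P}\neq\emp\Leftrightarrow\zer(\boldsymbol{M}+\boldsymbol{S})\neq\emp\Leftrightarrow\boldsymbol{\mathfrak D}\neq\emp$ follows from the elementary fact that a subset of a product is nonempty if and only if each of its coordinate projections is.

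It remains to show that $\zer(\boldsymbol{M}+\boldsymbol{S})$ is closed and convex, and here the plan is to establish maximal monotonicity of $\boldsymbol{M}+\boldsymbol{S}$, because the zero set of a maximally monotone operator $\boldsymbol{T}$ is $\boldsymbol{T}^{-1}\boldsymbol{0}$, which is closed and convex. I would verify maximal monotonicity of $\boldsymbol{M}$ factorwise: each of $-\boldsymbol{z}+\boldsymbol{E}$ and $\boldsymbol{r}+\boldsymbol{F}^{-1}$ is maximally monotone (translating the graph of a maximally monotone operator in the value direction preserves maximal monotonicity, and the inverse $\boldsymbol{F}^{-1}$ is maximally monotone), and a product of maximally monotone operators on a direct sum of Hilbert spaces is maximally monotone. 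For $\boldsymbol{S}$, a direct computation gives $\scal{(\boldsymbol{x},\boldsymbol{v})}{\boldsymbol{S}(\boldsymbol{x},\boldsymbol{v})}=\scal{\boldsymbol{x}}{\boldsymbol{L}^*\boldsymbol{v}}-\scal{\boldsymbol{v}}{\boldsymbol{L}\boldsymbol{x}}=0$, so $\boldsymbol{S}$ is a bounded, everywhere-defined, skew-adjoint, hence monotone and maximally monotone, operator.

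Finally, since $\dom\boldsymbol{S}=\KKK$, the sum $\boldsymbol{M}+\boldsymbol{S}$ is maximally monotone, whence $\zer(\boldsymbol{M}+\boldsymbol{S})$ is closed and convex, completing the proof. I expect the only genuine obstacle to lie in this last step: the identification of $\zer(\boldsymbol{M}+\boldsymbol{S})$ with $(\star)$ and of $\boldsymbol{\mathfrak P},\boldsymbol{\mathfrak D}$ with its projections is purely formal, but the closedness-and-convexity claim requires a sum theorem for maximally monotone operators. The everywhere-definedness and continuity of $\boldsymbol{S}$ place us in the favorable regime where $\dom\boldsymbol{S}=\KKK$ alone forces maximal monotonicity of the sum, so no delicate domain-interior qualification is needed, and I would simply invoke the corresponding result in \cite{Livre1}.
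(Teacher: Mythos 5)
Your proof is correct. The paper itself does not prove this lemma---it imports it verbatim as \cite[Proposition~2.8]{Siop11}---and your argument is precisely the standard one underlying that reference: identify $\zer(\boldsymbol{M}+\boldsymbol{S})$ with the set of Kuhn--Tucker-type pairs $(\boldsymbol{x},\boldsymbol{v})$ satisfying $\boldsymbol{z}-\boldsymbol{L}^*\boldsymbol{v}\in\boldsymbol{E}\boldsymbol{x}$ and $\boldsymbol{L}\boldsymbol{x}-\boldsymbol{r}\in\boldsymbol{F}^{-1}\boldsymbol{v}$, recognize $\boldsymbol{\mathfrak P}$ and $\boldsymbol{\mathfrak D}$ as its coordinate projections, and obtain closedness and convexity from maximal monotonicity of $\boldsymbol{M}+\boldsymbol{S}$, which holds because the skew operator $\boldsymbol{S}$ is monotone, continuous, and everywhere defined.
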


The following theorem contains our algorithm for solving
Problem~\ref{prob:1} and states its main asymptotic properties.
In this primal-dual splitting algorithm, each single-valued 
operators is used explicitly, while each set-valued operators is 
activated via its resolvent. Approximations in the
evaluations of the operators are tolerated and modeled
by absolutely summable error sequences. The algorithm consists of
three main loops, each of which can be implemented on a parallel
architecture.

\begin{theorem}
\label{t:1}
Consider the setting of Problem~\ref{prob:1}.
For every $i\in\{1,\ldots,m\}$, let $(a_{1,i,n})_{n\in\NN}$,
$(b_{1,i,n})_{n\in\NN}$, and $(c_{1,i,n})_{n\in\NN}$ be 
absolutely summable sequences in $\HH_i$ and, for every 
$k\in\{1,\ldots,K\}$, let $(a_{2,k,n})_{n\in\NN}$, 
$(b_{2,k,n})_{n\in\NN}$, and $(c_{2,k,n})_{n\in\NN}$ be 
absolutely summable sequences in $\GG_k$. 
Let $x_{1,0}\in\HH_1$, \ldots, $x_{m,0}\in\HH_m$, 
$v_{1,0}\in\GG_1$, \ldots, $v_{K,0}\in\GG_K$, let
$\varepsilon\in\left]0,1/(\beta+1)\right[$, 
let $(\gamma_n)_{n\in\NN}$ be a sequence in 
$[\varepsilon,(1-\varepsilon)/\beta]$, and set
\begin{equation}
\label{e:5h8Njiq-11b}
\begin{array}{l}
\text{For}\;n=0,1,\ldots\\
\left\lfloor
\begin{array}{l}
\text{For}\;i=1,\ldots,m\\
\left\lfloor
\begin{array}{l}
s_{1,i,n}=x_{i,n}-\gamma_n\Big(C_ix_{i,n}+
\sum_{k=1}^KL_{ki}^*v_{k,n}+a_{1,i,n}\Big)\\[1mm]
p_{1,i,n}=J_{\gamma_n A_i}(s_{1,i,n}+\gamma_nz_i)+b_{1,i,n}\\[1mm]
\end{array}
\right.\\[1mm]
\text{For}\;k=1,\ldots,K\\
\left\lfloor
\begin{array}{l}
s_{2,k,n}=v_{k,n}-\gamma_n\Big(D_k^{-1}v_{k,n}-
\sum_{i=1}^mL_{ki}x_{i,n}+a_{2,k,n}\Big)\\[2mm]
p_{2,k,n}=s_{2,k,n}-\gamma_n\big(r_k+J_{\gamma_n^{-1}B_k}
(\gamma_n^{-1}s_{2,k,n}-r_k)+b_{2,k,n}\big)\\[2mm]
q_{2,k,n}=p_{2,k,n}-\gamma_n\Big(D_k^{-1}p_{2,k,n}-
\sum_{i=1}^mL_{ki}p_{1,i,n}+c_{2,k,n}\Big)\\
v_{k,n+1}=v_{k,n}-s_{2,k,n}+q_{2,k,n}
\end{array}
\right.\\[1mm]
\text{For}\;i=1,\ldots,m\\
\left\lfloor
\begin{array}{l}
q_{1,i,n}=p_{1,i,n}-\gamma_n\Big(C_ip_{1,i,n}+
\sum_{k=1}^KL_{ki}^*p_{2,k,n}+c_{1,i,n}\Big)\\
x_{i,n+1}=x_{i,n}-s_{1,i,n}+q_{1,i,n}.
\end{array}
\right.\\
\end{array}
\right.\\
\end{array}
\end{equation}
Then the following hold.
\begin{enumerate}
\item
\label{t:1i}
$(\forall i\in\{1,\ldots,m\})$
$\sum_{n\in\NN}\|x_{i,n}-p_{1,i,n}\|^2<\pinf$.
\item
\label{t:1i'}
$(\forall k\in\{1,\ldots,K\})$
$\sum_{n\in\NN}\|v_{k,n}-p_{2,k,n}\|^2<\pinf$.
\item
\label{t:1ii}
There exist a solution $(\overline{x_1},\ldots,\overline{x_m})$ 
to \eqref{e:IJ834hj8fr-24p} and a solution 
$(\overline{v_1},\ldots,\overline{v_K})$ to \eqref{e:IJ834hj8fr-24d} 
such that the following hold.
\begin{enumerate}
\item
\label{t:1iia}
$(\forall i\in\{1,\ldots,m\})$ 
$z_i-\sum_{k=1}^KL_{ki}^*\overline{v_k}\in
A_i\overline{x_i}+C_i\overline{x_i}$.
\item
\label{t:1iib}
$(\forall k\in\{1,\ldots,K\})$
$\sum_{i=1}^mL_{ki}\overline{x_i}-r_k\in B_k^{-1}\overline{v_k}+
D_k^{-1}\overline{v_k}$.
\item
\label{t:1iic}
$(\forall i\in\{1,\ldots,m\})$ $x_{i,n}\weakly\overline{x_i}~$ and
$~p_{1,i,n}\weakly\overline{x_i}$.
\item
\label{t:1iid}
$(\forall k\in\{1,\ldots,K\})$ $v_{k,n}\weakly\overline{v_k}~$ and 
$~p_{2,k,n}\weakly\overline{v_k}$.
\item 
\label{t:1iie}
Suppose that, for some $j\in\{1,\ldots,m\}$, $A_j$ or $C_j$ is 
uniformly monotone at $\overline{x_j}$. Then 
$x_{j,n}\to\overline{x_j}$ and $p_{1,j,n}\to\overline{x_j}$.
\item 
\label{t:1iif}
Suppose that, for some $l\in\{1,\ldots,K\}$, $B_l$ or $D_l$ is 
couniformly monotone at $\overline{v_l}$. Then
$v_{l,n}\to\overline{v_l}$ and $p_{2,l,n}\to\overline{v_l}$.
\end{enumerate}
\end{enumerate}
\end{theorem}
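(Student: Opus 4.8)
The plan is to recast Problem~\ref{prob:1} as a single two-operator inclusion in a primal-dual product space and then invoke the forward-backward-forward scheme of Lemma~\ref{l:broome-st2010-10-27}. First I would set $\HHH=\HH_1\oplus\cdots\oplus\HH_m$, $\GGG=\GG_1\oplus\cdots\oplus\GG_K$, $\KKK=\HHH\oplus\GGG$, and introduce the product operators $\boldsymbol{A}\colon\boldsymbol{x}\mapsto A_1x_1\times\cdots\times A_mx_m$, $\boldsymbol{B}\colon\boldsymbol{v}\mapsto B_1v_1\times\cdots\times B_Kv_K$, $\boldsymbol{C}\colon\boldsymbol{x}\mapsto(C_ix_i)_{1\leq i\leq m}$, $\boldsymbol{D}\colon\boldsymbol{v}\mapsto D_1v_1\times\cdots\times D_Kv_K$, together with $\boldsymbol{L}\colon\boldsymbol{x}\mapsto(\sum_iL_{ki}x_i)_{1\leq k\leq K}$, $\boldsymbol{z}=(z_i)_i$, and $\boldsymbol{r}=(r_k)_k$. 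With $\boldsymbol{P}\colon(\boldsymbol{x},\boldsymbol{v})\mapsto(-\boldsymbol{z}+\boldsymbol{A}\boldsymbol{x})\times(\boldsymbol{r}+\boldsymbol{B}^{-1}\boldsymbol{v})$ and $\boldsymbol{Q}\colon(\boldsymbol{x},\boldsymbol{v})\mapsto(\boldsymbol{C}\boldsymbol{x}+\boldsymbol{L}^*\boldsymbol{v},\,\boldsymbol{D}^{-1}\boldsymbol{v}-\boldsymbol{L}\boldsymbol{x})$, the operator $\boldsymbol{P}$ is maximally monotone (a direct sum of translates of maximally monotone operators), while $\boldsymbol{Q}=\boldsymbol{S}+\boldsymbol{R}$, where $\boldsymbol{S}\colon(\boldsymbol{x},\boldsymbol{v})\mapsto(\boldsymbol{L}^*\boldsymbol{v},-\boldsymbol{L}\boldsymbol{x})$ is skew (hence monotone with $\langle\boldsymbol{u}\mid\boldsymbol{S}\boldsymbol{u}\rangle=0$) and $\boldsymbol{R}\colon(\boldsymbol{x},\boldsymbol{v})\mapsto(\boldsymbol{C}\boldsymbol{x},\boldsymbol{D}^{-1}\boldsymbol{v})$ is monotone. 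Since $\|\boldsymbol{L}\|^2\leq\lambda$ by \eqref{e:5h8Njiq-11a} and $\boldsymbol{R}$ is $\max\{\max_i\mu_i,\max_k\nu_k\}$-Lipschitzian, $\boldsymbol{Q}$ is monotone and $\beta$-Lipschitzian.

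Next I would verify that \eqref{e:5h8Njiq-11b} is exactly the iteration \eqref{e:rio2010-10-11u} applied to $\boldsymbol{P}$ and $\boldsymbol{Q}$ on $\KKK$, with $\boldsymbol{w}_n=(x_{1,n},\ldots,x_{m,n},v_{1,n},\ldots,v_{K,n})$. The steps producing $s_{1,i,n},s_{2,k,n}$ and $q_{1,i,n},q_{2,k,n}$ are the evaluations of $\boldsymbol{Q}$ at $\boldsymbol{w}_n$ and at $\boldsymbol{p}_n$, while the backward step is the resolvent $J_{\gamma_n\boldsymbol{P}}$, which decouples across the factors: for the primal factors $J_{\gamma_n(-z_i+A_i)}=J_{\gamma_nA_i}(\cdot+\gamma_nz_i)$ gives $p_{1,i,n}$, and for the dual factors Lemma~\ref{l:4} with $A=B_k$ rewrites $J_{\gamma_n(r_k+B_k^{-1})}$ and gives $p_{2,k,n}$. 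The error terms $b_{1,i,n}$ and $-\gamma_nb_{2,k,n}$ (the factors $\gamma_n$ being bounded) assemble into an absolutely summable sequence in $\KKK$, and likewise for the $\boldsymbol{a}_n$ and $\boldsymbol{c}_n$. To confirm applicability of Lemma~\ref{l:broome-st2010-10-27} I would set $\boldsymbol{E}=\boldsymbol{A}+\boldsymbol{C}$ and $\boldsymbol{F}=\boldsymbol{B}\infconv\boldsymbol{D}$, both maximally monotone because $\boldsymbol{C}$ and $\boldsymbol{D}^{-1}$ are single-valued, monotone, and continuous, and observe that $\boldsymbol{M}+\boldsymbol{S}=\boldsymbol{P}+\boldsymbol{Q}$, where $\boldsymbol{M}$ is the operator of Lemma~\ref{l:1}. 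Hence $\zer(\boldsymbol{P}+\boldsymbol{Q})=\zer(\boldsymbol{M}+\boldsymbol{S})$, and since \eqref{e:IJ834hj8fr-24p} is assumed solvable, Lemma~\ref{l:1} yields $\zer(\boldsymbol{P}+\boldsymbol{Q})\neq\emp$ together with $\zer(\boldsymbol{P}+\boldsymbol{Q})\subset\boldsymbol{\mathfrak P}\times\boldsymbol{\mathfrak D}$.

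With these identifications, Lemma~\ref{l:broome-st2010-10-27} gives $\sum_n\|\boldsymbol{w}_n-\boldsymbol{p}_n\|^2<\pinf$ and $\boldsymbol{w}_n\weakly\overline{\boldsymbol{w}}$, $\boldsymbol{p}_n\weakly\overline{\boldsymbol{w}}$ for some $\overline{\boldsymbol{w}}=(\overline{x_1},\ldots,\overline{x_m},\overline{v_1},\ldots,\overline{v_K})\in\zer(\boldsymbol{P}+\boldsymbol{Q})$. Projecting onto the factors of $\KKK$ yields \ref{t:1i} and \ref{t:1i'} from the summability, and \ref{t:1iic}, \ref{t:1iid} from the weak convergence, while reading off the two blocks of $0\in(\boldsymbol{P}+\boldsymbol{Q})\overline{\boldsymbol{w}}$ gives \ref{t:1iia} and \ref{t:1iib}; the inclusion $\overline{\boldsymbol{w}}\in\boldsymbol{\mathfrak P}\times\boldsymbol{\mathfrak D}$ confirms that $(\overline{x_i})_i$ and $(\overline{v_k})_k$ solve \eqref{e:IJ834hj8fr-24p} and \eqref{e:IJ834hj8fr-24d}.

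The delicate part is the strong convergence \ref{t:1iie}--\ref{t:1iif}. Writing $\widetilde{\boldsymbol{p}}_n=J_{\gamma_n\boldsymbol{P}}\boldsymbol{s}_n$ for the exact resolvent values (so $\widetilde{\boldsymbol{p}}_n-\overline{\boldsymbol{w}}\weakly0$ and $\boldsymbol{w}_n-\widetilde{\boldsymbol{p}}_n\to0$), the definition of $\boldsymbol{s}_n$ gives $\gamma_n^{-1}(\boldsymbol{w}_n-\widetilde{\boldsymbol{p}}_n)-\boldsymbol{Q}\boldsymbol{w}_n-\boldsymbol{a}_n\in\boldsymbol{P}\widetilde{\boldsymbol{p}}_n$, while $-\boldsymbol{Q}\overline{\boldsymbol{w}}\in\boldsymbol{P}\overline{\boldsymbol{w}}$. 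Monotonicity of $\boldsymbol{P}$ then yields $\Phi_n\leq\langle\widetilde{\boldsymbol{p}}_n-\overline{\boldsymbol{w}}\mid\gamma_n^{-1}(\boldsymbol{w}_n-\widetilde{\boldsymbol{p}}_n)-\boldsymbol{a}_n-\boldsymbol{S}(\boldsymbol{w}_n-\overline{\boldsymbol{w}})-(\boldsymbol{R}\boldsymbol{w}_n-\boldsymbol{R}\overline{\boldsymbol{w}})\rangle$, where $\Phi_n=0$ in general and $\Phi_n=\phi(\|\widetilde{p}_{1,j,n}-\overline{x_j}\|)$ (resp.\ $\phi(\|\widetilde{p}_{2,l,n}-\overline{v_l}\|)$) when $A_j$ (resp.\ $B_l$) is uniformly monotone. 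I would control the right-hand side by splitting $\widetilde{\boldsymbol{p}}_n-\overline{\boldsymbol{w}}=(\widetilde{\boldsymbol{p}}_n-\boldsymbol{w}_n)+(\boldsymbol{w}_n-\overline{\boldsymbol{w}})$: the skew contribution collapses since $\langle\boldsymbol{w}_n-\overline{\boldsymbol{w}}\mid\boldsymbol{S}(\boldsymbol{w}_n-\overline{\boldsymbol{w}})\rangle=0$, every term carrying a factor $\boldsymbol{w}_n-\widetilde{\boldsymbol{p}}_n$ or $\boldsymbol{a}_n$ vanishes, and one is left with $0\leq\Phi_n\leq o(1)-\langle\boldsymbol{w}_n-\overline{\boldsymbol{w}}\mid\boldsymbol{R}\boldsymbol{w}_n-\boldsymbol{R}\overline{\boldsymbol{w}}\rangle$. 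Since this last inner product is nonnegative, both it and $\Phi_n$ tend to $0$. If $A_j$ or $B_l$ is (co)uniformly monotone, then $\Phi_n\to0$ gives $\widetilde{p}_{1,j,n}\to\overline{x_j}$ or $\widetilde{p}_{2,l,n}\to\overline{v_l}$; if instead $C_j$ or $D_l$ is (co)uniformly monotone, the corresponding block of $\langle\boldsymbol{w}_n-\overline{\boldsymbol{w}}\mid\boldsymbol{R}\boldsymbol{w}_n-\boldsymbol{R}\overline{\boldsymbol{w}}\rangle$ dominates $\phi(\|x_{j,n}-\overline{x_j}\|)$ or $\phi(\|v_{l,n}-\overline{v_l}\|)$, which therefore tends to $0$. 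In every case the argument of $\phi$ tends to $0$ because $\phi$ is increasing and vanishes only at $0$, and transferring through $\boldsymbol{w}_n-\boldsymbol{p}_n\to0$ and the vanishing errors yields strong convergence of both $x_{j,n},p_{1,j,n}$ and $v_{l,n},p_{2,l,n}$. The main obstacle is precisely this final step: isolating the single (co)uniformly monotone block while showing that the skew coupling and all cross terms are asymptotically negligible.
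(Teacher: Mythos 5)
Your proposal is correct and follows essentially the same route as the paper: the same product-space operators $\boldsymbol{P}=\boldsymbol{M}-\boldsymbol{R}$ and $\boldsymbol{Q}=\boldsymbol{R}+\boldsymbol{S}$, the same appeal to Lemmas~\ref{l:4}, \ref{l:1}, and \ref{l:broome-st2010-10-27}, and the same identification of \eqref{e:5h8Njiq-11b} with \eqref{e:rio2010-10-11u}. Your treatment of \ref{t:1iie}--\ref{t:1iif} is merely a vectorial repackaging of the paper's componentwise estimates: your cancellation $\scal{\boldsymbol{w}_n-\overline{\boldsymbol{w}}}{\boldsymbol{S}(\boldsymbol{w}_n-\overline{\boldsymbol{w}})}=0$ together with $\boldsymbol{w}_n-\widetilde{\boldsymbol{p}}_n\to 0$ is exactly what makes the cross terms in \eqref{e:taranto1} and \eqref{e:taranto3} telescope into the vanishing quantity $\sum_i\scal{x_{i,n}-\overline{x_i}}{\sum_kL_{ki}^*(\widetilde{p}_{2,k,n}-v_{k,n})}$, and the sandwiching of $\Phi_n$ and $\scal{\boldsymbol{w}_n-\overline{\boldsymbol{w}}}{\boldsymbol{R}\boldsymbol{w}_n-\boldsymbol{R}\overline{\boldsymbol{w}}}$ between $0$ and an $o(1)$ term reproduces the paper's case analysis for $A_j$, $C_j$, $B_l$, $D_l$.
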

\begin{proof}
Let us introduce the Hilbert direct sums 
\begin{equation}
\label{e:IJ834hj8fr-30}
\HHH=\HH_1\oplus\cdots\oplus\HH_m,\quad
\GGG=\GG_1\oplus\cdots\oplus\GG_K,
\quad\text{and}\quad
\KKK=\HHH\oplus\GGG,
\end{equation}
and let us denote by $\boldsymbol{x}=(x_i)_{1\leq i\leq m}$ and 
$\boldsymbol{v}=(v_k)_{1\leq k\leq K}$ generic elements in 
$\HHH$ and $\GGG$, respectively. We also define 
\begin{equation}
\label{e:5h8Njiq-12b}
\begin{cases}
\boldsymbol{A}\colon\HHH\to 2^{\HHH}\colon\boldsymbol{x}\mapsto
\overset{m}{\underset{i=1}{\cart}}A_ix_i\\
\boldsymbol{C}\colon\HHH\to\HHH\colon\boldsymbol{x}\mapsto
(C_ix_i)_{1\leq i\leq m}\\
\boldsymbol{E}=\boldsymbol{A}+\boldsymbol{C}\\
\boldsymbol{L}\colon\HHH\to\GGG\colon\boldsymbol{x}\mapsto
\bigg(\Sum_{i=1}^mL_{ki}x_i\bigg)_{1\leq k\leq K}\\
\boldsymbol{z}=(z_i)_{1\leq i\leq m}\\
\end{cases}
\quad\text{and}\qquad
\begin{cases}
\boldsymbol{B}\colon\GGG\to 2^{\GGG}\colon\boldsymbol{v}\mapsto 
\overset{K}{\underset{k=1}{\cart}}B_kv_k\\[3mm]
\boldsymbol{D}\colon\GGG\to 2^{\GGG}\colon\boldsymbol{v}\mapsto 
\overset{K}{\underset{k=1}{\cart}}D_kv_k\\[3mm]
\boldsymbol{F}=\boldsymbol{B}\infconv\boldsymbol{D}\\
\boldsymbol{r}=(r_k)_{1\leq k\leq K}.
\end{cases}
\end{equation}
It follows from \cite[Proposition~20.22 and 20.23, 
Corollaries~20.25 and 24.4(i)]{Livre1} that 
$\boldsymbol{A}$, $\boldsymbol{B}$, 
$\boldsymbol{C}$, $\boldsymbol{D}$, $\boldsymbol{E}$, and
$\boldsymbol{F}$ are maximally monotone. 
Moreover, $\boldsymbol{L}\in\BL(\HHH,\GGG)$,
$\boldsymbol{L}^*\colon\GGG\to\HHH\colon\boldsymbol{v}
\mapsto(\sum_{k=1}^KL_{ki}^*v_k)_{1\leq i\leq m}$, and
\begin{equation}
\label{e:5h8Njiq-12x}
(\forall\boldsymbol{x}\in\HHH)\quad
\|\boldsymbol{L}\boldsymbol{x}\|^2
=\sum_{k=1}^K\bigg\|\sum_{i=1}^mL_{ki}x_i\bigg\|^2
\leq\lambda\|\boldsymbol{x}\|^2.
\end{equation}
Next, we set
\begin{equation}
\label{e:5h8Njiq-12c}
\begin{cases}
\boldsymbol{M}\colon\KKK\to 2^{\KKK}\colon
(\boldsymbol{x},\boldsymbol{v})\mapsto
(-\boldsymbol{z}+\boldsymbol{E}\boldsymbol{x})
\times(\boldsymbol{r}+\boldsymbol{F}^{-1}\boldsymbol{v})\\
\boldsymbol{P}\colon\KKK\to 2^{\KKK}\colon
(\boldsymbol{x},\boldsymbol{v})\mapsto
(-\boldsymbol{z}+\boldsymbol{A}\boldsymbol{x})\times(\boldsymbol{r}
+\boldsymbol{B}^{-1}\boldsymbol{v})\\
\boldsymbol{Q}\colon\KKK\to\KKK\colon
(\boldsymbol{x},\boldsymbol{v})\mapsto
\big(\boldsymbol{C}\boldsymbol{x}+\boldsymbol{L}^*\boldsymbol{v},
\boldsymbol{D}^{-1}\boldsymbol{v}-\boldsymbol{L}
\boldsymbol{x}\big)\\
\boldsymbol{R}\colon\KKK\to\KKK\colon(\boldsymbol{x},
\boldsymbol{v})\mapsto(\boldsymbol{C}\boldsymbol{x},
\boldsymbol{D}^{-1}\boldsymbol{v})\\
\boldsymbol{S}\colon\KKK\to\KKK\colon(\boldsymbol{x},
\boldsymbol{v})\mapsto(\boldsymbol{L}^*\boldsymbol{v},
-\boldsymbol{L}\boldsymbol{x}).
\end{cases}
\end{equation}
Note that
\begin{equation}
\label{e:5h8Njiq-13a}
\zer(\boldsymbol{P}+\boldsymbol{Q})=
\menge{(\boldsymbol{x},\boldsymbol{v})\in\HHH\oplus\GGG}
{\boldsymbol{z}-\boldsymbol{L}^*\boldsymbol{v}\in
\boldsymbol{A}\boldsymbol{x}+\boldsymbol{C}\boldsymbol{x}
\quad\text{and}\quad\boldsymbol{L}\boldsymbol{x}-\boldsymbol{r}
\in\boldsymbol{B}^{-1}\boldsymbol{v}+
\boldsymbol{D}^{-1}\boldsymbol{v}}.
\end{equation}
Furthermore, in view of 
\cite[Propositions~20.22 and 20.23]{Livre1}, $\boldsymbol{P}$ 
is maximally monotone, and Lemma~\ref{l:4} and 
\cite[Proposition~23.16]{Livre1} yield
\begin{multline}
\label{e:5h8Njiq-08a}
(\forall\gamma\in\RPP)(\forall\boldsymbol{x}\in\HHH)
(\forall\boldsymbol{v}\in\GGG)\quad
J_{\gamma\boldsymbol{P}}(\boldsymbol{x},\boldsymbol{v})=
\Big(J_{\gamma A_1}(x_1+\gamma z_1),\ldots,
J_{\gamma A_m}(x_m+\gamma z_m),\\
v_1-\gamma\big(r_1+J_{\gamma^{-1}B_1}(\gamma^{-1}v_1-r_1)\big),
\ldots,v_K-\gamma\big(r_K+J_{\gamma^{-1}B_K}
(\gamma^{-1}v_K-r_K)\big)\Big).
\end{multline}
On the other hand, since $\boldsymbol{C}$ and $\boldsymbol{D}^{-1}$ 
are monotone and Lipschitzian with, respectively, constants 
$\mu=\text{max}_{1\leq i\leq m}\mu_i$ and
$\nu=\text{max}_{1\leq k\leq K}\nu_k$, 
$\boldsymbol{R}$ is monotone and Lipschitzian with
constant $\text{max}\{\mu,\nu\}$. In addition, it follows from 
\cite[Proposition~2.7(ii)]{Siop11} and \eqref{e:5h8Njiq-12x} 
that $\boldsymbol{S}\in\BL(\KKK,\KKK)$ is 
a skew (hence monotone) operator with 
$\|\boldsymbol{S}\|=\|\boldsymbol{L}\|\leq\sqrt{\lambda}$.
Altogether, since $\boldsymbol{Q}=\boldsymbol{R}+\boldsymbol{S}$,
we derive from \eqref{e:5h8Njiq-11a} that
\begin{equation}
\label{e:5h8Njiq-12f}
\boldsymbol{P}\;\text{is maximally monotone and}\;
\boldsymbol{Q}\;\text{is monotone and $\beta$-Lipschitzian.}
\end{equation}
Let us call $\boldsymbol{\mathfrak P}$ and 
$\boldsymbol{\mathfrak D}$ the sets of solutions to 
\eqref{e:IJ834hj8fr-24p} and \eqref{e:IJ834hj8fr-24d}, respectively.
It follows from \eqref{e:5h8Njiq-12b} that
\begin{equation}
\label{e:5h8Njiq-12d}
\begin{cases}
\boldsymbol{\mathfrak P}=\menge{\boldsymbol{x}\in\HHH}
{\boldsymbol{z}\in\boldsymbol{E}\boldsymbol{x}+
\boldsymbol{L}^*(\boldsymbol{F}
(\boldsymbol{L}\boldsymbol{x}-\boldsymbol{r}))}\\
\boldsymbol{\mathfrak D}=\menge{\boldsymbol{v}\in\GGG}
{-\boldsymbol{r}\in -\boldsymbol{L}(\boldsymbol{E}^{-1}
(\boldsymbol{z}-\boldsymbol{L}^*\boldsymbol{v}))+
\boldsymbol{F}^{-1}\boldsymbol{v}}.
\end{cases}
\end{equation}
Hence, since $\boldsymbol{\mathfrak P}\neq\emp$ by assumption, 
we deduce from Lemma~\ref{l:1} that 
\begin{equation}
\label{e:5h8Njiq-11g}
\emp\neq\zer(\boldsymbol{M}+\boldsymbol{S})
=\zer(\boldsymbol{P}+\boldsymbol{Q})
\subset\boldsymbol{\mathfrak P}\times\boldsymbol{\mathfrak D}.
\end{equation}
Thus, to solve Problem~\ref{prob:1}, it is enough to find a zero of
$\boldsymbol{P}+\boldsymbol{Q}$. For every $n\in\NN$, let us set
\begin{equation}
\label{e:5h8Njiq-12j}
\begin{cases}
\boldsymbol{w}_n=(x_{1,n},\ldots,x_{m,n},v_{1,n},\ldots,v_{K,n})\\
\boldsymbol{s}_n=(s_{1,1,n},\ldots,s_{1,m,n},s_{2,1,n},\ldots,
s_{2,K,n})\\
\boldsymbol{p}_n=(p_{1,1,n},\ldots,p_{1,m,n},p_{2,1,n},\ldots,
p_{2,K,n})\\
\boldsymbol{q}_n=(q_{1,1,n},\ldots,q_{1,m,n},q_{2,1,n},\ldots,
q_{2,K,n})
\end{cases}
\end{equation}
and
\begin{equation}
\label{e:5h8Njiq-12k}
\begin{cases}
\boldsymbol{a}_n=(a_{1,1,n},\ldots,a_{1,m,n},a_{2,1,n},\ldots,
a_{2,K,n})\\
\boldsymbol{b}_n=(b_{1,1,n},\ldots,b_{1,m,n},
-\gamma_nb_{2,1,n},\ldots,-\gamma_nb_{2,K,n})\\
\boldsymbol{c}_n=(c_{1,1,n},\ldots,c_{1,m,n},c_{2,1,n},\ldots,
c_{2,K,n}).
\end{cases}
\end{equation}
Then, using \eqref{e:5h8Njiq-12b}, \eqref{e:5h8Njiq-12c}, and 
\eqref{e:5h8Njiq-08a}, we see that \eqref{e:5h8Njiq-11b} reduces 
to \eqref{e:rio2010-10-11u}.
Moreover, our assumptions and \eqref{e:IJ834hj8fr-30} imply that
$(\boldsymbol{a}_n)_{n\in\NN}$, $(\boldsymbol{b}_n)_{n\in\NN}$,
and $(\boldsymbol{c}_n)_{n\in\NN}$ are absolutely summable 
sequences in $\KKK$. Hence, it follows from \eqref{e:5h8Njiq-12f}, 
\eqref{e:5h8Njiq-11g}, and Lemma~\ref{l:broome-st2010-10-27} that 
$\sum_{n\in\NN}\|\boldsymbol{w}_n-\boldsymbol{p}_n\|^2<\pinf$ and
that there exists 
$\boldsymbol{\overline{w}}\in\zer(\boldsymbol{P}+\boldsymbol{Q})$
such that $\boldsymbol{w}_n\weakly\boldsymbol{\overline{w}}$ and 
$\boldsymbol{p}_n\weakly\boldsymbol{\overline{w}}$. 
Upon setting $\boldsymbol{\overline{w}}=(\overline{x_1},\ldots,
\overline{x_m},\overline{v_1},\ldots,\overline{v_K})$ and 
appealing to \eqref{e:IJ834hj8fr-30} and \eqref{e:5h8Njiq-13a}, we 
thus obtain assertions \ref{t:1i}, \ref{t:1i'}, and 
\ref{t:1iia}--\ref{t:1iid}. 

\ref{t:1iie}: Let us introduce the variables
\begin{equation}
\label{e:5h8Njiq-19a}
(\forall i\in\{1,\ldots,m\})(\forall n\in\NN)\quad 
\begin{cases}
\widetilde{s}_{1,i,n}=x_{i,n}-\gamma_n\bigg(C_ix_{i,n}+
\Sum_{k=1}^KL_{ki}^*v_{k,n}\bigg)\\
\widetilde{p}_{1,i,n}=J_{\gamma_n A_i}(\widetilde{s}_{1,i,n}
+\gamma_nz_i)
\end{cases}
\end{equation}
and
\begin{equation}
\label{e:ford}
(\forall k\in\{1,\ldots,K\})(\forall n\in\NN)\quad 
\begin{cases}
\widetilde{s}_{2,k,n}=v_{k,n}-\gamma_n\bigg(D_k^{-1}v_{k,n}-
\Sum_{i=1}^mL_{ki}x_{i,n}\bigg)\\
\widetilde{p}_{2,k,n}=\widetilde{s}_{2,k,n}-\gamma_n\Big(r_k+
J_{\gamma_n^{-1}B_k}(\gamma_n^{-1}\widetilde{s}_{2,k,n}-r_k)\Big).
\end{cases}
\end{equation}
It follows from \eqref{e:5h8Njiq-11b} that
\begin{equation}
(\forall i\in\{1,\ldots,m\})(\forall n\in\NN)\quad
\|s_{1,i,n}-\widetilde{s}_{1,i,n}\|=\gamma_n\|a_{1,i,n}\|\leq
\beta^{-1}\|a_{1,i,n}\|.
\end{equation}
Hence, by virtue of the nonexpansiveness of the resolvents 
\cite[Proposition~23.7]{Livre1}, we have
\begin{align}
(\forall i\in\{1,\ldots,m\})(\forall n\in\NN)\;
\|p_{1,i,n}-\widetilde{p}_{1,i,n}\| 
&=\|J_{\gamma_n A_i}(s_{1,i,n}+\gamma_nz_i)+b_{1,i,n}\!-\!
J_{\gamma_n A_i}(\widetilde{s}_{1,i,n}+\gamma_nz_i)\|\nonumber\\
&\leq\|s_{1,i,n}-\widetilde{s}_{1,i,n}\|+\|b_{1,i,n}\|\nonumber\\
&\leq\beta^{-1}\|a_{1,i,n}\|+\|b_{1,i,n}\|.
\end{align}
In turn, since, for every $i\in\{1,\ldots,m\}$,
$(a_{1,i,n})_{n\in\NN}$ and $(b_{1,i,n})_{n\in\NN}$ are 
absolutely summable, we get
\begin{equation}
\label{e:convyp1t}
(\forall i\in\{1,\ldots,m\})\quad
s_{1,i,n}-\widetilde{s}_{1,i,n}\to 0 
\quad\text{and}\quad
p_{1,i,n}-\widetilde{p}_{1,i,n}\to 0.
\end{equation}
Likewise, we derive from \eqref{e:5h8Njiq-11b} and \eqref{e:ford} 
that
\begin{equation}
\label{e:convyp2t}
(\forall k\in\{1,\ldots,K\})\quad
s_{2,k,n}-\widetilde{s}_{2,k,n}\to 0 \quad\text{and}\quad
p_{2,k,n}-\widetilde{p}_{2,k,n}\to 0.
\end{equation}
On the other hand, we deduce from \ref{t:1iia} that
\begin{equation}
\label{e:barletta}
(\forall i\in\{1,\ldots,m\})(\exi u_i\in\HH_i)
\quad u_i\in A_i\overline{x_i}\quad\text{and}\quad
z_i=u_i+\sum_{k=1}^KL_{ki}^*\overline{v_k}+C_i\overline{x_i},
\end{equation}
and from \ref{t:1iib} that
\begin{equation}
\label{e:lecce}
(\forall k\in\{1,\ldots,K\})\quad\overline{v_k}\in
B_k\bigg(\sum_{i=1}^mL_{ki}\overline{x_i}-r_k-
D_k^{-1}\overline{v_k}\bigg).
\end{equation}
In addition, \eqref{e:5h8Njiq-19a} yields
\begin{equation}
\label{e:bari}
(\forall i\in\{1,\ldots,m\})(\forall n\in\NN)\quad 
\frac{x_{i,n}-\widetilde{p}_{1,i,n}}{\gamma_n}
-\sum_{k=1}^KL_{ki}^*v_{k,n}-C_ix_{i,n}
+z_i\in A_i\widetilde{p}_{1,i,n},
\end{equation}
while \eqref{e:ford} yields
\begin{equation}
\label{e:bari2}
(\forall k\in\{1,\ldots,K\})(\forall n\in\NN)\quad 
\widetilde{p}_{2,k,n}\in B_k\bigg(
\frac{v_{k,n}-\widetilde{p}_{2,k,n}}{\gamma_n}
+\sum_{i=1}^mL_{ki}x_{i,n}
-r_k-D_k^{-1}v_{k,n}\bigg).
\end{equation}
Now, let us set
\begin{multline}
\label{e:defalpha12}
(\forall n\in\NN)\quad 
\delta_n=\sum_{k=1}^K\bigg(\frac{1}{\varepsilon}+\nu_k\bigg)
\|v_{k,n}-\widetilde{p}_{2,k,n}\|\,
\|\widetilde{p}_{2,k,n}-\overline{v_k}\|
\quad\text{and}\quad
(\forall i\in\{1,\ldots,m\}) \\
\alpha_{i,n}=\|\widetilde{p}_{1,i,n}-x_{i,n}\|
\bigg(\frac{1}{\varepsilon}\|\widetilde{p}_{1,i,n}-\overline{x_i}\|
+\mu_i\|x_{i,n}-\overline{x_i}\|+\sum_{k=1}^K\|L_{ki}\|\,
\|v_{k,n}-\overline{v_k}\|\bigg).
\end{multline}
It follows from \ref{t:1i}, \ref{t:1i'}, \ref{t:1iic}, 
\ref{t:1iid}, \eqref{e:convyp1t}, and \eqref{e:convyp2t} that
\begin{equation}
\label{e:2011-07-09a}
\delta_n\to 0\quad\text{and}\quad(\forall i\in\{1,\ldots,m\})
\quad\alpha_{i,n}\to 0.
\end{equation}
Using the Cauchy-Schwarz inequality, the Lipschitz-continuity and 
the monotonicity of the operators $(C_i)_{1\leq i\leq m}$, 
\eqref{e:barletta}, \eqref{e:bari}, and the monotonicity of the 
operators $(A_i)_{1\leq i\leq m}$, we obtain
\[
(\forall i\in\{1,\ldots,m\})(\forall n\in\NN)\quad
\alpha_{i,n}+\Scal{x_{i,n}-\overline{x_i}}{\sum_{k=1}^K
L_{ki}^*(\overline{v_k}-v_{k,n})}
\]\\[-14mm]
\begin{align}
&\geq\|\widetilde{p}_{1,i,n}-x_{i,n}\|
\big(\varepsilon^{-1}\|\widetilde{p}_{1,i,n}-\overline{x_i}\|+
\|C_i x_{i,n}-C_i\overline{x_i}\|\big)+
\Scal{\widetilde{p}_{1,i,n}-x_{i,n}}
{\sum_{k=1}^KL_{ki}^*(\overline{v_k}-v_{k,n})}\nonumber\\[-3mm]
&\quad\;+\Scal{x_{i,n}-\overline{x_i}}{\sum_{k=1}^KL_{ki}^*
(\overline{v_k}-v_{k,n})}\nonumber\\[-3mm]
&=\|\widetilde{p}_{1,i,n}-x_{i,n}\|
\big(\varepsilon^{-1} \|\widetilde{p}_{1,i,n}-\overline{x_i}\|
+\|C_i x_{i,n}-C_i \overline{x_i}\|\big)+
\Scal{\widetilde{p}_{1,i,n}-\overline{x_i}}{\sum_{k=1}^K
L_{ki}^*(\overline{v_k}-v_{k,n})}\nonumber\\
&\geq\Scal{\widetilde{p}_{1,i,n}-\overline{x_i}}
{\frac{x_{i,n}-\widetilde{p}_{1,i,n}}{\gamma_n}+\sum_{k=1}^K
L_{ki}^*(\overline{v_k}-v_{k,n})}+
\scal{\widetilde{p}_{1,i,n}-x_{i,n}}{C_i
\overline{x_i}-C_i x_{i,n}}\nonumber\\
&=\Scal{\widetilde{p}_{1,i,n}-\overline{x_i}}
{\frac{x_{i,n}-\widetilde{p}_{1,i,n}}{\gamma_n}-
\sum_{k=1}^K L_{ki}^* v_{k,n}-C_ix_{i,n}+
\sum_{k=1}^KL_{ki}^*\overline{v_k}+ C_i\overline{x_i}}
\nonumber\\
&\quad\;+\scal{x_{i,n}-\overline{x_i}}
{C_ix_{i,n}-C_i\overline{x_i} }\nonumber\\
&=\Scal{\widetilde{p}_{1,i,n}-\overline{x_i}}{
\frac{x_{i,n}-\widetilde{p}_{1,i,n}}{\gamma_n}
-\sum_{k=1}^K L_{ki}^*v_{k,n}-C_i x_{i,n}+z_i-u_i}\nonumber\\
&\quad\;+\scal{x_{i,n}-\overline{x_i}}
{C_ix_{i,n}-C_i\overline{x_i}}\label{e:HdN1}\\
&\geq\Scal{\widetilde{p}_{1,i,n}-\overline{x_i}}
{\bigg(\frac{x_{i,n}-\widetilde{p}_{1,i,n}}{\gamma_n}-
\sum_{k=1}^KL_{ki}^*v_{k,n}-C_i x_{i,n}+z_i\bigg)-u_i}
\label{e:HdN2}\\
&\geq 0.
\label{e:taranto1}
\end{align}
On the other hand, since the operators $(D_k^{-1})_{1\leq k\leq K}$
are Lipschitzian and monotone, and since the operators 
$(B_k)_{1\leq k\leq K}$ are monotone, we deduce from 
\eqref{e:defalpha12}, \eqref{e:lecce}, and \eqref{e:bari2} that 
\[
(\forall l\in\{1.\ldots,K\})(\forall n\in\NN)\quad
\delta_n+\sum_{i=1}^m\Scal{x_{i,n}-\overline{x_i}}{\sum_{k=1}^K
L_{ki}^*(\widetilde{p}_{2,k,n}-\overline{v_k})}
\]\\[-9mm]
\begin{align}
&\geq\sum_{k=1}^K\Scal{\frac{v_{k,n}-\widetilde{p}_{2,k,n}}
{\gamma_n}+D_k^{-1}\widetilde{p}_{2,k,n}-
D_k^{-1}v_{k,n}+\sum_{i=1}^mL_{ki}
(x_{i,n}-\overline{x_i})}
{\widetilde{p}_{2,k,n}-\overline{v_k}}\nonumber\\
&=\sum_{k=1}^K\Scal{\bigg(\frac{v_{k,n}\!-\!
\widetilde{p}_{2,k,n}}{\gamma_n}+\sum_{i=1}^mL_{ki}x_{i,n}
\!-\!r_k\!-\!D_k^{-1}v_{k,n}\bigg)\!-\!\bigg(\sum_{i=1}^mL_{ki}
\overline{x_i}\!-\!r_k\!-\!D_k^{-1}\overline{v_k}\bigg)}
{\widetilde{p}_{2,k,n}\!-\!\overline{v_k}}\nonumber\\
&\quad\;+\sum_{k=1}^K\scal{D_k^{-1}\widetilde{p}_{2,k,n}
-D_k^{-1}\overline{v_k}}
{\widetilde{p}_{2,k,n}-\overline{v_k}}\label{e:HdN4}\\
&\geq\Scal{\bigg(\frac{v_{l,n}-\widetilde{p}_{2,l,n}}{\gamma_n}+
\sum_{i=1}^mL_{li}x_{i,n}-r_l-D_l^{-1}v_{l,n}\bigg)-
\bigg(\sum_{i=1}^mL_{li}\overline{x_i}-r_l-D_l^{-1}
\overline{v_l}\bigg)}{\widetilde{p}_{2,l,n}-
\overline{v_l}}\nonumber\\
&\quad\;+\scal{D_l^{-1}\widetilde{p}_{2,l,n}
-D_l^{-1}\overline{v_l}}
{\widetilde{p}_{2,l,n}-\overline{v_l}}\label{e:HdN5}\\
&\geq\Scal{\bigg(\frac{v_{l,n}-\widetilde{p}_{2,l,n}}{\gamma_n}+
\sum_{i=1}^mL_{li}x_{i,n}-r_l-D_l^{-1}v_{l,n}\bigg)-
\bigg(\sum_{i=1}^mL_{li}\overline{x_i}-r_l-D_l^{-1}
\overline{v_l}\bigg)}{\widetilde{p}_{2,l,n}-
\overline{v_l}}\label{e:HdN9}\\
&\geq 0.
\label{e:taranto3}
\end{align}
We consider two cases.
\begin{itemize}
\item
If $A_j$ is uniformly monotone at $\overline{x_j}$, then, in view
of \eqref{e:HdN2}, \eqref{e:barletta}, \eqref{e:bari}, and 
\eqref{e:genova}, there exists an increasing function 
$\phi_{A_j}\colon\RP\to\RPX$ that vanishes only at $0$ such that
\begin{equation}
\label{e:taranto2}
(\forall n\in\NN)\quad
\alpha_{j,n}+\Scal{x_{j,n}-\overline{x_j}}{\sum_{k=1}^K
L_{kj}^*(\overline{v_k}-v_{k,n})}\geq\phi_{A_j}
(\|\widetilde{p}_{1,j,n}-\overline{x_j}\|).
\end{equation}
Combining \eqref{e:taranto3}, \eqref{e:taranto1}, and 
\eqref{e:taranto2} yields
\begin{equation}
\label{e:conclustrong}
(\forall n\in\NN)\quad
\delta_n+\sum_{i=1}^m\alpha_{i,n}+
\sum_{i=1}^m\Scal{x_{i,n}-\overline{x_i}}
{\sum_{k=1}^KL_{ki}^*(\widetilde{p}_{2,k,n}-v_{k,n})}
\geq\phi_{A_j}(\|\widetilde{p}_{1,j,n}-\overline{x_j}\|).
\end{equation}
It follows from \eqref{e:2011-07-09a}, \ref{t:1i'}, \ref{t:1iic},
\eqref{e:convyp2t}, and \cite[Lemma~2.41(iii)]{Livre1} 
that $\phi_{A_j}(\|\widetilde{p}_{1,j,n}-\overline{x_j}\|)\to 0$ 
and, in turn, that $\widetilde{p}_{1,j,n}\to\overline{x_j}$.
In view of \ref{t:1i} and \eqref{e:convyp1t}, we get
$p_{1,j,n}\to\overline{x_j}$ and $x_{j,n}\to\overline{x_j}$.
\item
If $C_j$ is uniformly monotone at $\overline{x_j}$, then we derive 
from \eqref{e:taranto3}, \eqref{e:HdN1}, and 
\eqref{e:taranto1} that there exists an increasing function 
$\phi_{C_j}\colon\RP\to\RPX$ that vanishes 
only at $0$ such that 
\begin{multline}
(\forall n\in\NN)\quad
\delta_n+\sum_{i=1}^m\alpha_{i,n}+\sum_{i=1}^m\Scal{x_{i,n}-
\overline{x_i}}{\sum_{k=1}^KL_{ki}^*(\widetilde{p}_{2,k,n}-
v_{k,n})}\\
\geq\phi_{C_j}(\|x_{j,n}-\overline{x_j}\|).
\end{multline}
This implies that $\phi_{C_j}(\|x_{j,n}-\overline{x_j}\|)\to 0$
and hence that $x_{j,n}\to\overline{x_j}$. Finally, \ref{t:1i}
yields $p_{1,j,n}\to\overline{x_j}$.
\end{itemize}

\ref{t:1iif}:
We consider two cases.
\begin{itemize}
\item
If $B_l$ is couniformly monotone at $\overline{v_l}$, then
\eqref{e:HdN9}, \eqref{e:lecce}, and 
\eqref{e:bari2} imply that there exists an 
increasing function $\phi_{B_l^{-1}}\colon\RP\to\RPX$ that 
vanishes only at $0$ such that
\[
(\forall n\in\NN)\quad
\delta_n+\sum_{i=1}^m\Scal{x_{i,n}-\overline{x_i}}{\sum_{k=1}^K
L_{ki}^*(\widetilde{p}_{2,k,n}-\overline{v_k})}
\]\\[-9mm]
\begin{align}
&\geq\Scal{\bigg(\frac{v_{l,n}\!-\!\widetilde{p}_{2,l,n}}{\gamma_n}+
\sum_{i=1}^mL_{li}x_{i,n}\!-\!r_l-D_l^{-1}v_{l,n}\bigg)-
\bigg(\sum_{i=1}^mL_{li}\overline{x_i}\!-\!r_l-D_l^{-1}
\overline{v_l}\bigg)}{\widetilde{p}_{2,l,n}\!-\!
\overline{v_l}}\nonumber\\
&\geq\phi_{B_l^{-1}}(\|\widetilde{p}_{2,l,n}-\overline{v_l}\|).
\end{align}
Combining this with \eqref{e:taranto1} yields
\begin{equation}
(\forall n\in\NN)\quad
\delta_n+\sum_{i=1}^m\alpha_{i,n}+
\sum_{i=1}^m\Scal{x_{i,n}-\overline{x_i}}{\sum_{k=1}^K
L_{ki}^*(\widetilde{p}_{2,k,n}-v_{k,n})}
\geq\phi_{B_l^{-1}}(\|\widetilde{p}_{2,l,n}-\overline{v_l}\|).
\end{equation}
Hence, using \eqref{e:2011-07-09a}, \ref{t:1i'}, 
\ref{t:1iic}, \eqref{e:convyp2t}, and 
\cite[Lemma~2.41(iii)]{Livre1}, we get
$\phi_{B_l^{-1}}(\|\widetilde{p}_{2,l,n}-\overline{v_l}\|)\to 0$ 
and, in turn, $\widetilde{p}_{2,l,n}\to\overline{v_l}$. 
Using to \eqref{e:convyp2t} and \ref{t:1i'}, we conclude
that $p_{2,l,n}\to\overline{v_l}$ and $v_{l,n}\to\overline{v_l}$. 
\item
If $D_l$ is couniformly monotone at $\overline{v_l}$, then 
it follows from \eqref{e:HdN5} and \eqref{e:taranto3} that 
there exists an increasing function 
$\phi_{D_l^{-1}}\colon\RP\to\RPX$ that vanishes only at 
$0$ such that
\begin{align}
(\forall n\in\NN)\quad
\delta_n+\sum_{i=1}^m\Scal{x_{i,n}-\overline{x_i}}{\sum_{k=1}^K
L_{ki}^*(\widetilde{p}_{2,k,n}-\overline{v_k})}
&\geq\scal{D_l^{-1}\widetilde{p}_{2,l,n}
-D_l^{-1}\overline{v_l}}
{\widetilde{p}_{2,l,n}-\overline{v_l}}\nonumber\\
&\geq\phi_{D_l^{-1}}(\|\widetilde{p}_{2,l,n}-\overline{v_l}\|).
\end{align}
Thus, \eqref{e:taranto1} yields
\begin{equation}
(\forall n\in\NN)\quad
\delta_n+\sum_{i=1}^m\alpha_{i,n}+
\sum_{i=1}^m\Scal{x_{i,n}-\overline{x_i}}{\sum_{k=1}^K
L_{ki}^*(\widetilde{p}_{2,k,n}-v_{k,n})}
\geq\phi_{D_l^{-1}}(\|\widetilde{p}_{2,l,n}-\overline{v_l}\|),
\end{equation}
and we conclude as above.
\end{itemize}
\end{proof}

\begin{remark}
When $m=1$, Theorem~\ref{t:1} specializes to
\cite[Theorem~3.1]{Svva12}. Our proof of 
Theorem~\ref{t:1}\ref{t:1i}--\ref{t:1iid} hinges on
a self-contained application of Lemmas~\ref{l:1} and 
\ref{l:broome-st2010-10-27} in the primal-dual product space 
$\KKK$ of \eqref{e:IJ834hj8fr-30}. Alternatively, these results could
be obtained as an application of \cite[Theorem~3.1]{Svva12}
using the product space $\HHH$ of \eqref{e:IJ834hj8fr-30} as a 
primal space. This strategy, however, would not allow us to 
recover the strong convergence results of 
Theorem~\ref{t:1}\ref{t:1iie}.
\end{remark}

\begin{remark}
It follows from the Cauchy-Schwarz inequality that,
for every $(x_i)_{1\leq i\leq m}\in\bigoplus_{i=1}^m\HH_i$, 
\begin{equation}
\label{e:2012-10-12xu}
\sum_{k=1}^K\bigg\|\sum_{i=1}^mL_{ki}x_i\bigg\|^2
\leq\sum_{k=1}^K\bigg(\sum_{i=1}^m\|L_{ki}\|\,\|x_i\|\bigg)^2
\leq\sum_{k=1}^K\bigg(\sum_{i=1}^m\|L_{ki}\|^2\bigg)\,
\bigg(\sum_{i=1}^m\|x_i\|^2\bigg).
\end{equation}
Hence, in general, one can use
$\lambda=\sum_{k=1}^K\sum_{i=1}^m\|L_{ki}\|^2$ in 
\eqref{e:5h8Njiq-11a}. However, as will be seen in 
subsequent sections, this bound can be improved when the 
operator $\boldsymbol{L}$ of \eqref{e:5h8Njiq-12b} has a 
special structure.
\end{remark}

In the remainder the paper, we highlight a few instantiations of 
Theorem~\ref{t:1} that illustrate the variety of problems to 
which it can be applied and which are not explicitly solvable via 
existing techniques. 

\section{Inclusions involving general parallel sums}
\label{sec:4}

The first special case of Problem~\ref{prob:1} we feature is an 
extension of a univariate inclusion problem investigated in 
\cite{Svva12}, which involves parallel sums with monotone 
operators admitting Lipschitzian inverses. In the following 
formulation, we lift this restriction.

\begin{problem}
\label{prob:2}
Let $\HH$ be a real Hilbert space, let $K_1$, $K_2$, and $K$ be 
integers such that $0\leq K_1\leq K_2\leq K\geq 1$, let $z\in\HH$, 
let $A\colon\HH\to 2^{\HH}$ be maximally monotone, and let 
$C\colon\HH\to\HH$ be monotone and $\mu$-Lipschitzian for some 
$\mu\in\RP$. For every integer $k\in\{1,\ldots,K\}$, let $\GG_k$ 
be a real Hilbert space, let $r_k\in\GG_k$, let 
$B_k\colon\GG_k\to 2^{\GG_k}$ and $S_k\colon\GG_k\to 2^{\GG_k}$ 
be maximally monotone, and let $L_k\in\BL(\HH,\GG_k)$; 
moreover, if $K_1+1\leq k\leq K_2$, $S_k\colon\GG_k\to{\GG_k}$ 
is $\beta_k$-Lipschitzian for some $\beta_k\in\RP$, 
and, if $K_2+1\leq k\leq K$, $S_k^{-1}\colon\GG_k\to\GG_k$ is 
$\beta_k$-Lipschitzian for some $\beta_k\in\RP$. It is assumed that
\begin{equation}
\label{e:uH98j3h-07c}
\beta=\text{max}\big\{\mu,\beta_{K_1+1},\ldots,\beta_K\big\}+
\sqrt{1+\sum_{k=1}^K\|L_k\|^2}>0,
\end{equation}
and that the inclusion
\begin{equation}
\label{e:uH98j3h-07p}
\text{find}\;\;\overline{x}\in\HH\;\;\text{such that}\;\;
z\in A\overline{x}+\sum_{k=1}^KL_k^*\big((B_k\infconv S_k)
(L_k\overline{x}-r_k)\big)+C\overline{x}
\end{equation}
possesses at least one solution. Solve \eqref{e:uH98j3h-07p} 
together with the dual problem
\begin{multline}
\label{e:uH98j3h-07d}
\text{find}\;\;\overline{v_1}\in\GG_1,\:\ldots,\:
\overline{v_K}\in\GG_K\;\:\text{such that}\\
(\forall k\in\{1,\ldots,K\})\quad
-r_k\in -L_k\bigg((A+C)^{-1}\bigg(z-\Sum_{l=1}^K
L_l^*\overline{v_l}\bigg)\bigg)+B_k^{-1}\overline{v_k}+
S_k^{-1}\overline{v_k}.
\end{multline}
\end{problem}

\begin{proposition}
\label{p:1}
Consider the setting of Problem~\ref{prob:2}.
Let $(a_{1,1,n})_{n\in\NN}$, $(b_{1,1,n})_{n\in\NN}$, and 
$(c_{1,1,n})_{n\in\NN}$ be absolutely summable sequences in $\HH$.
For every integer $k\in\{1,\ldots,K\}$, let 
$(a_{2,k,n})_{n\in\NN}$, $(b_{2,k,n})_{n\in\NN}$, and 
$(c_{2,k,n})_{n\in\NN}$ be absolutely summable sequences in 
$\GG_k$; moreover, if $1\leq k\leq K_1$, let 
$(b_{1,k+1,n})_{n\in\NN}$ be 
an absolutely summable sequence in $\GG_k$, and, if
$K_1+1\leq k\leq K_2$\,, let $(a_{1,k+1,n})_{n\in\NN}$ 
and $(c_{1,k+1,n})_{n\in\NN}$ be absolutely summable sequences in 
$\GG_k$. Let $x_{0}\in\HH$, $y_{1,0}\in\GG_1$, 
\ldots, $y_{K_2,0}\in\GG_{K_2}$, $v_{1,0}\in\GG_1$, \ldots, and
$v_{K,0}\in\GG_K$, let $\varepsilon\in\left]0,1/(\beta+1)\right[$,
let $(\gamma_n)_{n\in\NN}$ be a sequence in 
$[\varepsilon,(1-\varepsilon)/\beta]$, and set
\begin{equation}
\label{e:guad2012-10-28g}
\begin{array}{l}
\text{For}\;n=0,1,\ldots\\
\left\lfloor
\begin{array}{l}
s_{1,1,n}=x_{n}-\gamma_n\big(Cx_n+
\sum_{k=1}^KL_k^*v_{k,n}+a_{1,1,n}\big)\\
p_{1,1,n}=J_{\gamma_n A}(s_{1,1,n}+\gamma_nz)+b_{1,1,n}\\
\text{If}\;K_1\neq 0,\;\text{for}\;k=1,\ldots,K_1\\
\left\lfloor
\begin{array}{l}
s_{1,k+1,n}=y_{k,n}+\gamma_nv_{k,n}\\
p_{1,k+1,n}=J_{\gamma_n S_k}s_{1,k+1,n}+b_{1,k+1,n}\\
s_{2,k,n}=v_{k,n}-\gamma_n\big(y_{k,n}-L_kx_n+a_{2,k,n}\big)\\
p_{2,k,n}=s_{2,k,n}-\gamma_n\big(r_k+J_{\gamma_n^{-1}B_k}
(\gamma_n^{-1}s_{2,k,n}-r_k)+b_{2,k,n}\big)\\
q_{2,k,n}=p_{2,k,n}-\gamma_n\big(p_{1,k+1,n}-L_kp_{1,1,n}
+c_{2,k,n}\big)\\
v_{k,n+1}=v_{k,n}-s_{2,k,n}+q_{2,k,n}
\end{array}
\right.\\[1mm]
\text{If}\;K_1\neq K_2,\;\text{for}\;k=K_1+1,\ldots,K_2\\
\left\lfloor
\begin{array}{l}
s_{1,k+1,n}=y_{k,n}-\gamma_n\big(S_ky_{k,n}-v_{k,n}
+a_{1,k+1,n}\big)\\
p_{1,k+1,n}=s_{1,k+1,n}\\
s_{2,k,n}=v_{k,n}-\gamma_n\big(y_{k,n}-L_kx_n
+a_{2,k,n}\big)\\
p_{2,k,n}=s_{2,k,n}-\gamma_n\big(r_k+J_{\gamma_n^{-1}B_k}
(\gamma_n^{-1}s_{2,k,n}-r_k)+b_{2,k,n}\big)\\
q_{2,k,n}=p_{2,k,n}-\gamma_n\big(p_{1,k+1,n}-L_kp_{1,1,n}
+c_{2,k,n}\big)\\
v_{k,n+1}=v_{k,n}-s_{2,k,n}+q_{2,k,n}
\end{array}
\right.\\[1mm]
\text{If}\;K_2\neq K,\;\text{for}\;k=K_2+1,\ldots,K\\
\left\lfloor
\begin{array}{l}
s_{2,k,n}=v_{k,n}-\gamma_n\big(S_k^{-1}v_{k,n}-L_kx_n
+a_{2,k,n}\big)\\
p_{2,k,n}=s_{2,k,n}-\gamma_n\big(r_k+J_{\gamma_n^{-1}B_k}
(\gamma_n^{-1}s_{2,k,n}-r_k)+b_{2,k,n}\big)\\
q_{2,k,n}=p_{2,k,n}-\gamma_n\big(S_k^{-1}p_{2,k,n}-L_kp_{1,1,n}
+c_{2,k,n}\big)\\
v_{k,n+1}=v_{k,n}-s_{2,k,n}+q_{2,k,n}
\end{array}
\right.\\[1mm]
q_{1,1,n}=p_{1,1,n}-\gamma_n\big(Cp_{1,1,n}+
\sum_{k=1}^KL_k^*p_{2,k,n}+c_{1,1,n}\big)\\
x_{n+1}=x_{n}-s_{1,1,n}+q_{1,1,n}\\
\text{If}\;K_1\neq 0,\;\text{for}\;k=1,\ldots,K_1\\
\left\lfloor
\begin{array}{l}
q_{1,k+1,n}=p_{1,k+1,n}+\gamma_np_{2,k,n}\\
y_{k,n+1}=y_{k,n}-s_{1,k+1,n}+q_{1,k+1,n}
\end{array}
\right.\\
\text{If}\;K_1\neq K_2,\;\text{for}\;k=K_1+1,\ldots,K_2\\
\left\lfloor
\begin{array}{l}
q_{1,k+1,n}=p_{1,k+1,n}-\gamma_n\big(S_kp_{1,k+1,n}-p_{2,k,n}+
c_{1,k+1,n}\big)\\
y_{k,n+1}=y_{k,n}-s_{1,k+1,n}+q_{1,k+1,n}.
\end{array}
\right.\\
\end{array}
\right.\\
\end{array}
\end{equation}
Then the following hold for some solution $\overline{x}$ to 
\eqref{e:uH98j3h-07p} and some solution 
$(\overline{v_1},\ldots,\overline{v_K})$ to \eqref{e:uH98j3h-07d}.
\begin{enumerate}
\item
\label{p:1i}
$x_n\weakly\overline{x}~$ and 
$~(\forall k\in\{1,\ldots,K\})$ $v_{k,n}\weakly\overline{v_k}$.
\item 
\label{p:1iii}
Suppose that $A$ or $C$ is uniformly monotone at 
$\overline{x}$. Then $x_{n}\to\overline{x}$.
\item 
\label{p:1iv}
Suppose that, for some $l\in\{1,\ldots,K\}$, $B_l$ 
is couniformly monotone at $\overline{v_l}$. Then
$v_{l,n}\to\overline{v_l}$.
\item 
\label{p:1v}
Suppose that $K_2\neq K$ and that, for some 
$l\in\{K_2+1,\ldots,K\}$, $S_l$ is couniformly monotone at 
$\overline{v_l}$. Then $v_{l,n}\to\overline{v_l}$.
\end{enumerate}
\end{proposition}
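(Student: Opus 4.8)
The plan is to realize Problem~\ref{prob:2} as a special case of Problem~\ref{prob:1} and then read off Proposition~\ref{p:1} from Theorem~\ref{t:1}. The obstruction to a direct reduction is that the operators $(S_k)_{K_1+1\leq k\leq K_2}$ and the set-valued operators $(S_k)_{1\leq k\leq K_1}$ are \emph{not} assumed to have Lipschitzian inverses, so they cannot play the role of the $D_k$'s in Problem~\ref{prob:1}. The device I would use is the standard reformulation of a parallel sum: for the indices $k\leq K_2$, I would \textbf{introduce auxiliary primal variables} $y_k\in\GG_k$ that absorb the operator $S_k$ on the primal side, turning each corresponding parallel sum into a plain sum handled by the $A_i$/$C_i$ machinery, while leaving only the genuinely inverse-Lipschitzian operators $(S_k)_{K_2+1\leq k\leq K}$ in the role of the $D_k$'s.

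Concretely, I would instantiate Problem~\ref{prob:1} with $m=1+(K_2-K_1)$ primal components (the variable $x$ together with the $y_k$ for $K_1+1\leq k\leq K_2$) and with dual components indexed so that the $B_k$'s and, where applicable, the $S_k$'s become the coupling operators. For $1\leq k\leq K_1$ I would set the relevant $A$-type operator to $S_k$ and the corresponding $D$-type operator to $0$ (equivalently $D_k^{-1}=0$), so that its resolvent $J_{\gamma_n S_k}$ appears on the primal side and the block reduces to $B_k$ alone in the dual; for $K_1+1\leq k\leq K_2$ I would set the $A$-type operator to $0$ and keep $S_k$ as the single-valued $C$-type (Lipschitzian) operator acting on $y_k$; for $K_2+1\leq k\leq K$ I would take $D_k=S_k$, using that $S_k^{-1}$ is $\beta_k$-Lipschitzian. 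The linear operator $\boldsymbol L$ must be chosen to encode both the original $L_k$ and the identity couplings $y_k\leftrightarrow v_k$; the identity blocks explain the ``$+1$'' inside $\sqrt{1+\sum_k\|L_k\|^2}$ in \eqref{e:uH98j3h-07c}, since the norm of this augmented $\boldsymbol L$ contributes that extra unit. A short computation, invoking the block-diagonal structure, verifies that the Lipschitz constant $\beta$ and the bound $\lambda=1+\sum_{k=1}^K\|L_k\|^2$ match \eqref{e:5h8Njiq-11a}.

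Having fixed the dictionary, I would then \textbf{check that the iteration \eqref{e:guad2012-10-28g} is exactly \eqref{e:5h8Njiq-11b}} under this identification. This is the bookkeeping-intensive but conceptually routine step: one matches the three loops line by line. In the $k\leq K_1$ block, the update $p_{1,k+1,n}=J_{\gamma_n S_k}s_{1,k+1,n}+b_{1,k+1,n}$ is the resolvent step for the $A$-type operator $S_k$, while $s_{1,k+1,n}=y_{k,n}+\gamma_nv_{k,n}$ and $q_{1,k+1,n}=p_{1,k+1,n}+\gamma_np_{2,k,n}$ come from the identity-coupling blocks of $\boldsymbol L^*$; in the $K_1+1\leq k\leq K_2$ block, the forward--backward--forward steps involving $S_k$ are precisely the $C$-type explicit evaluations; and the $K_2+1\leq k\leq K$ block is a verbatim copy of the $D_k^{-1}$ lines in \eqref{e:5h8Njiq-11b}. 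One must verify that the primal inclusion for the augmented system, once the auxiliary variables are eliminated via $y_k$, collapses back to \eqref{e:uH98j3h-07p}: the zero in the $y_k$-component forces $v_k\in S_k(\,\cdot\,)$, which together with the $B_k$-block reconstitutes the parallel sum $(B_k\infconv S_k)(L_k\overline x-r_k)$ through \eqref{e:parasum}.

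With the reduction in place, \textbf{conclusions \ref{p:1i}--\ref{p:1v} follow immediately from Theorem~\ref{t:1}}. Weak convergence of $x_n$ and each $v_{k,n}$ is Theorem~\ref{t:1}\ref{t:1iic}--\ref{t:1iid}. For \ref{p:1iii}, uniform monotonicity of $A$ or $C$ at $\overline x$ is uniform monotonicity of the corresponding augmented $\boldsymbol A$- or $\boldsymbol C$-block at the relevant primal component, so Theorem~\ref{t:1}\ref{t:1iie} gives $x_n\to\overline x$. For \ref{p:1iv} and \ref{p:1v}, couniform monotonicity of $B_l$ (respectively of $S_l$, for $l\geq K_2+1$, where $S_l$ genuinely sits in the $D$-role) at $\overline{v_l}$ triggers Theorem~\ref{t:1}\ref{t:1iif}, yielding $v_{l,n}\to\overline{v_l}$. \textbf{The only real obstacle} is the careful construction of the augmented spaces and of $\boldsymbol L$ so that all three loops and the constant $\beta$ align simultaneously; once that dictionary is pinned down, everything else is a direct transcription.
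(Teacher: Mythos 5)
Your reduction is exactly the one the paper uses: introduce auxiliary primal variables to absorb the operators $S_k$ that lack Lipschitzian inverses, assign $S_k$ to the $A$-slot (resolvent step) for $k\leq K_1$ and to the $C$-slot (explicit Lipschitzian step) for $K_1+1\leq k\leq K_2$, keep $D_k=S_k$ only for $k\geq K_2+1$, couple $y_k$ to $v_k$ through $-\Id$ blocks of $\boldsymbol{L}$ (whence the extra $1$ under the square root in \eqref{e:uH98j3h-07c}), and then invoke Theorem~\ref{t:1}. Two points need repair. First, your count $m=1+(K_2-K_1)$ is inconsistent with the rest of your own construction: the set-valued operators $S_k$ for $1\leq k\leq K_1$ are to be used as $A$-type operators, and an $A$-type operator in Problem~\ref{prob:1} must act on one of the $m$ primal spaces; moreover the iteration \eqref{e:guad2012-10-28g} manifestly carries variables $y_{k,n}$ for \emph{all} $k\in\{1,\ldots,K_2\}$. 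The correct count is $m=K_2+1$, with $\HH_{k+1}=\GG_k$, $A_{k+1}=S_k$, $C_{k+1}=0$ for $k\leq K_1$, with $A_{k+1}=0$, $C_{k+1}=S_k$ for $K_1+1\leq k\leq K_2$, and with $D_k^{-1}=0$ (so that $B_k\infconv D_k=B_k$) for $k\leq K_2$ --- which is what your first paragraph describes before the count goes astray. Second, you only discuss the elimination direction (a solution of the augmented system yields a solution of \eqref{e:uH98j3h-07p}); to apply Theorem~\ref{t:1} at all you must first check that the augmented primal system \emph{possesses} a solution, and this is where the hypothesis that \eqref{e:uH98j3h-07p} is solvable is consumed: unfolding $v_k\in(B_k\infconv S_k)(L_kx-r_k)$ via \eqref{e:parasum} produces the required $y_k\in S_k^{-1}v_k$ with $L_kx-y_k-r_k\in B_k^{-1}v_k$, hence $0\in S_ky_k-B_k(L_kx-y_k-r_k)$. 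With these two repairs your argument coincides with the paper's proof.
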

\begin{proof}
We assume that $K_2\neq 0$ and consider the auxiliary problem 
\begin{multline}
\label{e:uH98j3h-07a}
\text{find}\;\;\overline{x}\in\HH,\:\overline{y_1}\in\GG_1,\:
\ldots,\:\overline{y_{K_2}}\in\GG_{K_2}
\;\:\text{such that}\\
\begin{cases}
z&\!\!\!\in
A\overline{x}+\Sum_{k=1}^{K_2}L_k^*\big(B_k(L_k\overline{x}
-\overline{y_k}-r_k)\big)+\Sum_{k=K_2+1}^KL_k^*\big(
(B_k\infconv S_k)(L_k\overline{x}-r_k)\big)+C\overline{x}\\
0&\!\!\!\in S_1\overline{y_1}-B_1(L_1\overline{x}-
\overline{y_1}-r_1)\\
&\!\!\vdots\\
0&\!\!\!\in S_{K_2}\overline{y_{K_2}}-B_{K_2}(L_{K_2}\overline{x}
-\overline{y_{K_2}}-r_{K_2})
\end{cases}
\end{multline}
together with the dual problem \eqref{e:uH98j3h-07d} (if $K_2=0$,
\eqref{e:uH98j3h-07a} should be replaced by \eqref{e:uH98j3h-07p} 
and the resulting simplifications in the proof are straightforward).
Let us show that solving the primal-dual problem 
\eqref{e:uH98j3h-07a}/\eqref{e:uH98j3h-07d} is a special case 
of Problem~\ref{prob:1} with 
\begin{equation}
\label{e:guadeloupe2012-10-29a}
\begin{cases}
m=K_2+1\\
\HH_1=\HH\\
A_1=A\\
C_1=C\\
\mu_1=\mu\\
\overline{x_1}=\overline{x}\\
z_1=z,
\end{cases}
(\forall k\in\{1,\ldots,K_2\})\quad
\begin{cases}
\HH_{k+1}=\GG_{k}\\
A_{k+1}=
\begin{cases}
S_{k},&\text{if}\;\;1\leq k\leq K_1;\\
0,&\text{if}\;\;K_1+1\leq k\leq K_2
\end{cases}\\[5mm]
C_{k+1}=
\begin{cases}
0,&\text{if}\;\;1\leq k\leq K_1;\\
S_{k},&\text{if}\;\;K_1+1\leq k\leq K_2
\end{cases}\\[5mm]
\mu_{k+1}=
\begin{cases}
0,&\text{if}\;\;1\leq k\leq K_1;\\
\beta_{k},&\text{if}\;\;K_1+1\leq k\leq K_2
\end{cases}\\
\overline{x_{k+1}}=\overline{y_{k}}\\
z_{k+1}=0,
\end{cases}
\end{equation}
and
\begin{equation}
\label{e:guadeloupe2012-10-29e}
(\forall k\in\{1,\ldots,K\})\quad
\begin{cases}
D_k=
\begin{cases}
\{0\}^{-1},&\text{if}\;\;1\leq k\leq K_2;\\
S_k,&\text{if}\;\;K_2+1\leq k\leq K\\
\end{cases}\\[5mm]
\nu_{k+1}=
\begin{cases}
0,&\text{if}\;\;1\leq k\leq K_2;\\
\beta_{k},&\text{if}\;\;K_2+1\leq k\leq K
\end{cases}\\
L_{k1}=L_k\\
(\forall i\in\{2,\ldots,K_2+1\})\:\; L_{ki}=
\begin{cases}
-\Id,&\text{if}\;\;i=k+1;\\
0,&\text{otherwise.}
\end{cases}
\end{cases}
\end{equation}
First, we note that, in this setting,  \eqref{e:IJ834hj8fr-24p} 
reduces to \eqref{e:uH98j3h-07a}, and \eqref{e:IJ834hj8fr-24d} to 
\eqref{e:uH98j3h-07d}.
Now define $\HHH$ and $\GGG$ as in 
\eqref{e:IJ834hj8fr-30}, let $x\in\HH$, 
let $(y_k)_{1\leq k\leq K_2}\in\bigoplus_{k=1}^{K_2}\GG_k$, set
$(x_i)_{1\leq i\leq m}=(x,y_1,\ldots,y_{K_2})\in\HHH$, set
$\boldsymbol{y}=(y_1,\ldots,y_{K_2},0,\ldots,0)\in\GGG$,
and set $\lambda=1+\sum_{k=1}^{K_2}\|L_k\|^2$.
Then, using the Cauchy-Schwarz inequality in $\RR^2$,
\begin{multline}
\label{e:roma1}
\sum_{k=1}^K\bigg\|\sum_{i=1}^mL_{ki}x_i\bigg\|^2
=\|(L_kx)_{1\leq k\leq K_2}-\boldsymbol{y}\|^2
\leq\big(\|\boldsymbol{y}\|+\|(L_kx)_{1\leq k\leq K_2}\|\big)^2\\
\leq\left(\|\boldsymbol{y}\|+\sqrt{\sum_{k=1}^{K_2}
\|L_k\|^2}\,\|x\|\right)^2
\leq\bigg(1+\sum_{k=1}^{K_2}\|L_k\|^2\bigg)
\big(\|\boldsymbol{y}\|^2+\|x\|^2\big)=
\lambda\sum_{i=1}^m\|x_i\|^2.
\end{multline}
Thus \eqref{e:5h8Njiq-11a} is a special case of
specializes to \eqref{e:uH98j3h-07c}. On the other hand,
by assumption, 
\eqref{e:uH98j3h-07p} has a solution, say $x$. Therefore, there 
exist $v_1\in\GG_1$, \ldots, $v_{K_2}\in\GG_{K_2}$ such that
\begin{equation}
\label{e:uH98j3h-12a}
\begin{cases}
z\in Ax+\Sum_{k=1}^{K_2}L_k^*v_k+\Sum_{k=K_2+1}^KL_k^*
\big((B_k\infconv S_k)(L_k{x}-r_k)\big)+Cx\\
(\forall k\in\{1,\ldots,K_2\})\quad
v_k\in(B_k\infconv S_k)(L_k{x}-r_k).
\end{cases}
\end{equation}
Therefore, in view of \eqref{e:parasum}, there exist 
$y_1\in\GG_1$, \ldots, $y_{K_2}\in\GG_{K_2}$ such that
\begin{equation}
\label{e:uH98j3h-12A}
\begin{cases}
z\in Ax+\Sum_{k=1}^{K_2}L_k^*v_k+\Sum_{k=K_2+1}^KL_k^*
\big((B_k\infconv S_k)(L_k{x}-r_k)\big)+Cx\\
(\forall k\in\{1,\ldots,K_2\})\quad y_k\in
S_k^{-1}v_k\quad\text{and}\quad L_k{x}-y_k-r_k\in B_k^{-1}v_k,
\end{cases}
\end{equation}
which implies that
\begin{equation}
\label{e:uH98j3h-14a}
\begin{cases}
z\in Ax+\Sum_{k=1}^{K_2}L_k^*v_k+
\Sum_{k=K_2+1}^KL_k^*\big((B_k\infconv S_k)(L_k{x}-r_k)\big)+Cx\\
(\forall k\in\{1,\ldots,K_2\})\quad v_k\in
S_ky_k\quad\text{and}\quad v_k\in B_k(L_k{x}-y_k-r_k),
\end{cases}
\end{equation}
and therefore that
\begin{equation}
\label{e:uH98j3h-14b}
\begin{cases}
z\in Ax+\Sum_{k=1}^{K_2}L_k^*\big(B_k(L_k{x}-y_k-r_k)\big)+
\Sum_{k=K_2+1}^KL_k^*\big((B_k\infconv S_k)(L_k{x}-r_k)\big)+Cx\\
(\forall k\in\{1,\ldots,K_2\})\quad 0\in
S_ky_k -B_k(L_k{x}-y_k-r_k).
\end{cases}
\end{equation}
This shows that \eqref{e:uH98j3h-07a} possesses a solution.
Next, upon defining 
\begin{equation}
\label{e:5h8Njiq-18x}
(\forall n\in\NN)\quad x_{1,n}=x_n\quad\text{and}\quad
(\forall k\in\{1,\ldots,K_2\})\quad 
\begin{cases}
x_{k+1,n}=y_{k,n};&\\
a_{1,k+1,n}=0,&\text{if}\;\;1\leq k\leq K_1;\\
b_{1,k+1,n}=0,&\text{if}\;\;K_1+1\leq k\leq K_2;\\
c_{1,k+1,n}=0,&\text{if}\;\;1\leq k\leq K_1,
\end{cases}
\end{equation}
we see that \eqref{e:5h8Njiq-11b} specializes to 
\eqref{e:guad2012-10-28g}. Hence, in view of 
\eqref{e:guadeloupe2012-10-29a}--\eqref{e:guadeloupe2012-10-29e} 
and Theorem~\ref{t:1}\ref{t:1iia}--\ref{t:1iid}, there exist a 
solution $(\overline{x},\overline{y_1},\ldots,\overline{y_{K_2}})$ 
to \eqref{e:uH98j3h-07a} and a solution 
$(\overline{v_1},\ldots,\overline{v_K})$ to \eqref{e:uH98j3h-07d} 
such that
\begin{equation}
\label{e:5h8Njiq-18a}
x_n\weakly\overline{x}\quad\text{and}\quad
(\forall k\in\{1,\ldots,K\})\quad v_{k,n}\weakly\overline{v_k},
\end{equation}
with
\begin{multline}
\label{e:uH98j3h-07m}
z-\sum_{k=1}^KL_{k}^*\overline{v_k}\in A\overline{x}+C\overline{x},
\quad (\forall k\in\{1,\ldots,K_2\})\quad
\begin{cases}
L_k\overline{x}-\overline{y_k}-r_k\in B_k^{-1}\overline{v_k}\\
\overline{v_k}\in S_k\overline{y_k},
\end{cases}\\
\quad\text{and}\quad
(\forall k\in\{K_2+1,\ldots,K\})\quad
L_k\overline{x}-r_k\in B_k^{-1}\overline{v_k}
+S_k^{-1}\overline{v_k}.
\end{multline}
Since the strong convergence claims \ref{p:1iii}--\ref{p:1v} 
are immediate consequences of 
Theorem~\ref{t:1}\ref{t:1iie}--\ref{t:1iif}, it remains to show 
that $\overline{x}$ solves \eqref{e:uH98j3h-07p}.
We derive from \eqref{e:uH98j3h-07m} that,
for every $k\in\{1,\ldots,K_2\}$, 
$L_k\overline{x}-\overline{y_k}-r_k\in B_k^{-1}\overline{v_k}$
and $\overline{y_k}\in S_k^{-1}\overline{v_k}$, and,
for every $k\in\{K_2+1,\ldots,K\}$, 
$L_k\overline{x}-r_k\in B_k^{-1}\overline{v_k}+
S_k^{-1}\overline{v_k}$. Altogether,
\begin{equation}
\label{e:5h8Njiq-18z}
(\forall k\in\{1,\ldots,K\})\quad 
L_k\overline{x}-r_k\in\big(B_k^{-1}+S_k^{-1}\big)\overline{v_k}
\end{equation}
and, therefore,
\begin{equation}
\label{e:5h8Njiq-18c}
\sum_{k=1}^KL_k^*\overline{v_k}\in\sum_{k=1}^KL_k^*
\big(\big(B_k^{-1}+S_k^{-1}\big)^{-1}
(L_k\overline{x}-r_k)\big)=\sum_{k=1}^KL_k^*
\big((B_k\infconv S_k)(L_k\overline{x}-r_k)\big).
\end{equation}
Thus, since \eqref{e:uH98j3h-07m} also asserts that 
$z-\sum_{k=1}^KL_{k}^*\overline{v_k}\in A\overline{x}
+C\overline{x}$, we conclude that $\overline{x}$ solves 
\eqref{e:uH98j3h-07p}. 
\end{proof}

\begin{remark}
\label{r:guadeloupe2012-10-30}
Problem~\ref{prob:2} encompasses more general
scenarios than that of \cite{Svva12}, which corresponds to the
case when $K_1=K_2=0$, i.e., when all the operators 
$(D_k^{-1})_{1\leq k\leq K}$ are restricted to be Lipschitzian.
This extension has been made possible by reformulating the 
original primal problem \eqref{e:uH98j3h-07p}, which involves only 
one variable, as the extended primal problem \eqref{e:uH98j3h-07a}, 
in which we added $K_2$ auxiliary variables. We also note that
Algorithm~\eqref{e:guad2012-10-28g} uses all the single-valued 
operators present in Problem~\ref{prob:2}, including 
$(S_k)_{K_1+1\leq k\leq K_2}$ and $(S_k^{-1})_{K_2+1\leq k\leq K}$, 
through explicit steps. 
\end{remark}

\section{Relaxation of inconsistent common zero problems}
\label{sec:5}

A common problem in nonlinear analysis is to find a common 
zero of maximally monotone operators $A$ and 
$(B_k)_{1\leq k\leq K}$ acting on a real Hilbert space $\HH$
\cite{Cras95,Dye92a,Lehd99}, i.e.,
\begin{equation}
\label{e:cras1995}
\text{find}\;\;\overline{x}\in\HH\;\;\text{such that}\;\;
0\in A\overline{x}\cap\bigcap_{k=1}^KB_k\overline{x}.
\end{equation}
In many situations, this problem may be inconsistent
(see \cite{Sign99} and the references therein) and it must be
approximated. We study the following relaxation of 
\eqref{e:cras1995}, together with its dual problem.

\begin{problem}
\label{prob:3}
Let $\HH$ be a real Hilbert space, let $K$ be a strictly positive
integer, let $A\colon\HH\to 2^{\HH}$ be maximally monotone, and, 
for every $k\in\{1,\ldots,K\}$, let $S_k\colon\HH\to 2^{\HH}$ be a
maximally monotone operator such that $S_k^{-1}$ is at most
single-valued and strictly monotone, with $S_k^{-1} 0=\{0\}$.
It is assumed that the inclusion
\begin{equation}
\label{e:uH98j3h-29p}
\text{find}\;\;\overline{x}\in\HH\;\;\text{such that}\;\;
0\in A\overline{x}+\sum_{k=1}^K(B_k\infconv S_k)\overline{x}
\end{equation}
possesses at least one solution. Solve \eqref{e:uH98j3h-29p} 
together with the dual problem
\begin{multline}
\label{e:uH98j3h-29d}
\text{find}\;\;\overline{u_1}\in\HH,\:\ldots,\:
\overline{u_K}\in\HH\;\:\text{such that}\\
(\forall k\in\{1,\ldots,K\})\quad
0\in -A^{-1}\bigg(-\Sum_{l=1}^K
\overline{u_l}\bigg)+B_k^{-1}\overline{u_k}+
S_k^{-1}\overline{u_k}.
\end{multline}
\end{problem}

First, we justify the fact that \eqref{e:uH98j3h-29p} is indeed a 
relaxation of \eqref{e:cras1995}.

\begin{proposition}
\label{p:uH98j3h-30}
In the setting of Problem~\ref{prob:3}, set
$Z=(\zer A)\cap\bigcap_{k=1}^K\zer B_k$ and  
suppose that $Z\neq\emp$.
Then the set of solutions to the primal problem
\eqref{e:uH98j3h-29p} is $Z$.
\end{proposition}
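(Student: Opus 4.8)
The plan is to prove the asserted equality by establishing the two inclusions $Z\subseteq\mathfrak S$ and $\mathfrak S\subseteq Z$, where $\mathfrak S$ denotes the solution set of \eqref{e:uH98j3h-29p}. Throughout, I would use two elementary consequences of the hypothesis $S_k^{-1}0=\{0\}$, namely $\zer S_k=\{0\}$ and $0\in S_k0$, together with the characterization furnished by \eqref{e:parasum}: for $x,y\in\HH$ one has $y\in(B_k\infconv S_k)x$ if and only if there exist $a,b\in\HH$ with $x=a+b$, $y\in B_ka$, and $y\in S_kb$.

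For the inclusion $Z\subseteq\mathfrak S$, fix $x\in Z$, so that $0\in Ax$ and $0\in B_kx$ for every $k$. Choosing $a=x$ and $b=0$ in the characterization above, and invoking $0\in B_kx$ together with $0\in S_k0$, I would obtain $0\in(B_k\infconv S_k)x$ for each $k$. Adding these memberships to $0\in Ax$ gives $0\in Ax+\sum_{k=1}^K(B_k\infconv S_k)x$, so $x\in\mathfrak S$.

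The substantive direction is $\mathfrak S\subseteq Z$. Let $x\in\mathfrak S$ and fix a reference point $z\in Z$, which exists since $Z\neq\emp$. Then there exist $w\in Ax$ and, for each $k$, a splitting $x=a_k+b_k$ with $y_k\in B_ka_k$ and $y_k\in S_kb_k$, such that $w+\sum_{k=1}^Ky_k=0$. Testing the monotonicity of $A$ against $(z,0)\in\gra A$, of each $B_k$ against $(z,0)\in\gra B_k$, and of each $S_k$ against $(0,0)\in\gra S_k$ produces
\[
\scal{x-z}{w}\geq 0,\qquad
\scal{a_k-z}{y_k}\geq 0,\qquad
\scal{b_k}{y_k}\geq 0.
\]
The key observation is that, after substituting $b_k=x-a_k$, the terms recombine as $\scal{(a_k-z)+(x-a_k)}{y_k}=\scal{x-z}{y_k}$, so the sum of all the left-hand sides collapses to $\scal{x-z}{w+\sum_{k=1}^Ky_k}=\scal{x-z}{0}=0$. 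A sum of nonnegative quantities equal to $0$ forces each to vanish; in particular $\scal{b_k}{y_k}=0$ for every $k$.

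It then remains to extract the conclusion from the strict monotonicity of $S_k^{-1}$. Since $y_k\in S_kb_k$ gives $b_k\in S_k^{-1}y_k$, and $S_k^{-1}0=\{0\}$, were $y_k\neq 0$ the strict monotonicity of the at-most-single-valued operator $S_k^{-1}$ would yield $\scal{y_k}{b_k}>0$, contradicting $\scal{b_k}{y_k}=0$; hence $y_k=0$ for every $k$. Consequently $w=-\sum_{k=1}^Ky_k=0$, so $0\in Ax$ and $x\in\zer A$; moreover $0=y_k\in S_kb_k$ gives $b_k\in\zer S_k=\{0\}$, whence $b_k=0$ and $x=a_k$, and then $0=y_k\in B_ka_k=B_kx$ gives $x\in\zer B_k$ for every $k$. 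Thus $x\in Z$, completing the proof. I expect the telescoping step to be the main obstacle: it is precisely the collapse of the summed inequalities to $0$ that upgrades each monotonicity bound to an equality, and only once $\scal{b_k}{y_k}=0$ is secured can the strict monotonicity of $S_k^{-1}$ and the normalization $\zer S_k=\{0\}$ be brought to bear.
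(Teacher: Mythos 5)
Your proof is correct and follows essentially the same route as the paper's: fix a reference point $z\in Z$, sum the monotonicity inequalities for $A$, $B_k$, and $S_k$ (equivalently $S_k^{-1}$) so that they telescope to $\scal{x-z}{0}=0$, force each term to vanish, and then invoke the strict monotonicity of $S_k^{-1}$ together with $S_k^{-1}0=\{0\}$ to conclude $y_k=0$ and hence $x\in Z$. The only cosmetic difference is that the paper first verifies that each $B_k\infconv S_k$ is at most single-valued before writing the decomposition, a preliminary your explicit splitting $x=a_k+b_k$ with $y_k\in B_ka_k$ and $y_k\in S_kb_k$ renders unnecessary.
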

\begin{proof}
It is clear that every point in $Z$ solves \eqref{e:uH98j3h-29p}.
Conversely, let $x$ be a solution to \eqref{e:uH98j3h-29p} and let 
$z\in Z$. We first note that the operators 
$(B_k\infconv S_k)_{1\leq k\leq K}$ are at most single-valued.
Indeed, let $k\in\{1,\ldots,K\}$ and let $(y,p)$ and $(y,q)$ be
in $\gra(B_k\infconv S_k)$. Then we must show that $p=q$.
We have  $p=(B_k\infconv S_k)y$ $\Leftrightarrow$ 
$y\in B_k^{-1}p+S_k^{-1}p$ $\Leftrightarrow$ 
$y-S_k^{-1}p\in B_k^{-1}p$. Likewise, $y-S_k^{-1}q\in B_k^{-1}q$ 
and, by monotonicity of $B_k$,
$-\scal{p-q}{S_k^{-1}p-S_k^{-1}q}=
\scal{p-q}{(y-S_k^{-1}p)-(y-S_k^{-1}q)}\geq 0$. Consequently,
by strict monotonicity of $S_k^{-1}$, 
$\scal{p-q}{S_k^{-1}p-S_k^{-1}q}=0$ and $p=q$.
Hence, since $x$ solves \eqref{e:uH98j3h-29p}, there exists
$(p_k)_{0\leq k\leq K}\in\HH^{K+1}$ such that
\begin{equation}
\label{e:uH98j3h-30a}
\sum_{k=0}^K p_k=0,\quad p_0\in Ax,\quad\text{and}\quad
(\forall k\in\{1,\ldots,K\})\quad
p_k=(B_k\infconv S_k)x.
\end{equation}
Therefore, we have
\begin{equation}
\label{e:uH98j3h-30b}
p_0\in Ax,\;0\in Az,
\quad\text{and}\quad 
(\forall k\in\{1,\ldots,K\})\quad
p_k\in B_k\big(x-S^{-1}_kp_k\big) 
\quad\text{and}\quad 0\in B_k z,
\end{equation}
and, by monotonicity of the operators $A$ and
$(B_k)_{1\leq k\leq K}$,
\begin{equation}
\label{e:uH98j3h-30c}
\scal{x-z}{p_0}\geq 0
\quad\text{and}\quad 
(\forall k\in\{1,\ldots,K\})\quad
\scal{x-S^{-1}_kp_k-z}{p_k}\geq 0.
\end{equation}
Hence, since $\sum_{k=0}^K p_k=0$, it follows from the monotonicity
of the operators $(S_k^{-1})_{1\leq k\leq K}$ that
\begin{align}
\label{e:uH98j3h-30d}
0&\geq -\sum_{k=1}^K\scal{p_k-0}{S^{-1}_kp_k-S^{-1}_k0}
\nonumber\\
&=\sum_{k=0}^K\scal{x-z}{p_k}-\sum_{k=1}^K\scal{S^{-1}_kp_k}{p_k}
\nonumber\\
&=\scal{x-z}{p_0}+\sum_{k=1}^K\scal{x-S^{-1}_kp_k-z}{p_k}
\nonumber\\
&\geq 0.
\end{align}
Thus, $\sum_{k=1}^K\scal{p_k-0}{S^{-1}_kp_k-S^{-1}_k0}=0$ and,
therefore, 
\begin{equation}
(\forall k\in\{1,\ldots,K\})\quad 
\scal{p_k-0}{S^{-1}_kp_k-S^{-1}_k0}=0. 
\end{equation}
The strict monotonicity 
of the operators $(S_k^{-1})_{1\leq k\leq K}$ implies that for 
every $k\in\{1,\ldots,K\}$ $p_k=0$, i.e., 
$x\in B_k^{-1}p_k+S_k^{-1}p_k=B_k^{-1}0+S_k^{-1}0=B_k^{-1}0$. 
In turn, $p_0=-\sum_{k=1}^Kp_k=0$, i.e., $x\in A^{-1}0$.
Altogether, $x\in Z$.
\end{proof}

\begin{remark}
\label{r:nov2012}
Suppose that in Problem~\ref{prob:3} we set, for every 
$k\in\{1,\ldots,K\}$, $S_k=\gamma_k^{-1}\Id$ where 
$\gamma_k\in\RPP$, i.e., $B_k\infconv S_k=\moyo{B_k}{\gamma_k}$ 
is the Yosida approximation of $B_k$ of index $\gamma_k$
\cite[Proposition~23.6(ii)]{Livre1}. 
Then \eqref{e:uH98j3h-29p} reduces to the setting investigated 
in \cite[Section~6.3]{Opti04}, namely
\begin{equation}
\label{e:uH98j3h-30p}
\text{find}\;\;\overline{x}\in\HH\;\;\text{such that}\;\;
0\in A\overline{x}+\sum_{k=1}^K\moyo{B_k}{\gamma_k}\overline{x},
\end{equation}
which itself covers the frameworks of 
\cite{Reic05,Sign99,Mahe93,Merc80} and the references therein.
In this case, Proposition~\ref{p:uH98j3h-30} specializes to
\cite[Proposition~6.10]{Opti04}. Now let us further specialize to
the case when $\HH=\RR^N$, $A=0$, and
\begin{equation}
(\forall k\in\{1,\ldots,K\})\quad 
\begin{cases}
\gamma_k=1\\
B_k\colon x\mapsto
\begin{cases}
\spa\{u_k\},&\text{if}\;\;\scal{x}{u_k}=\rho_k;\\
\emp,&\text{if}\;\;\scal{x}{u_k}\neq\rho_k,
\end{cases}
\end{cases}
\text{where}\quad 
\begin{cases}
u_k\in\RR^N\\
\|u_k\|=1\\
\rho_k\in\RR.
\end{cases}
\end{equation}
Then \eqref{e:cras1995} amounts to solving the system of linear 
equalities
\begin{equation}
\label{e:cauchy}
\text{find}\;\;\overline{x}\in\RR^N\;\;\text{such that}\;\;
(\forall k\in\{1,\ldots,K\})\quad\scal{\overline{x}}{u_k}=\rho_k,
\end{equation}
whereas \eqref{e:uH98j3h-29p} amounts to solving the least-squares
problem
\begin{equation}
\label{e:legendre}
\minimize{x\in\RR^N}{\sum_{k=1}^m|\scal{x}{u_k}-\rho_k|^2}.
\end{equation}
The idea of relaxing \eqref{e:cauchy} to \eqref{e:legendre} is 
due to Legendre \cite{Lege05} and Gauss \cite{Gaus09}.
\end{remark}

To solve Problem~\ref{prob:3}, we use Proposition~\ref{p:1}
to derive the following algorithm.

\begin{proposition}
\label{p:2}
Consider the setting of Problem~\ref{prob:3}. Let 
$(b_{1,1,n})_{n\in\NN}$ and, for every $k\in\{1,\ldots,K\}$, 
$(b_{1,k+1,n})_{n\in\NN}$ and $(b_{2,k,n})_{n\in\NN}$ be absolutely
summable sequences in $\HH$. Let $x_{0}\in\HH$, 
$(y_{k,0})_{1\leq k\leq K}\in\HH^K$, 
$(v_{k,0})_{1\leq k\leq K}\in\HH^K$, 
and $\varepsilon\in\,]0,1/(\sqrt{K+1}+1)[$\,, 
let $(\gamma_n)_{n\in\NN}$ be a sequence in 
$[\varepsilon,(1-\varepsilon)/\sqrt{K+1}]$, and set
\begin{equation}
\label{e:kl74Gh9c6-01g}
\begin{array}{l}
\text{For}\;n=0,1,\ldots\\
\left\lfloor
\begin{array}{l}
p_{1,1,n}=J_{\gamma_n A}\big(x_{n}-\gamma_n
\sum_{k=1}^Kv_{k,n}\big)+b_{1,1,n}\\
\text{For}\;k=1,\ldots,K\\
\left\lfloor
\begin{array}{l}
p_{1,k+1,n}=J_{\gamma_n S_k}(y_{k,n}+\gamma_nv_{k,n})+b_{1,k+1,n}\\
s_{2,k,n}=v_{k,n}-\gamma_n(y_{k,n}-x_n)\\
p_{2,k,n}=s_{2,k,n}-\gamma_n\big(J_{\gamma_n^{-1}B_k}
(\gamma_n^{-1}s_{2,k,n})+b_{2,k,n}\big)\\
v_{k,n+1}=v_{k,n}-s_{2,k,n}+p_{2,k,n}-\gamma_n\big(p_{1,k+1,n}-
p_{1,1,n}\big)
\end{array}
\right.\\[1mm]
x_{n+1}=p_{1,1,n}+\gamma_n\sum_{k=1}^K(v_{k,n}-p_{2,k,n})\\
\text{For}\;k=1,\ldots,K\\
\left\lfloor
\begin{array}{l}
y_{k,n+1}=p_{1,k+1,n}+\gamma_n(p_{2,k,n}-v_{k,n})
\end{array}
\right.\\
\end{array}
\right.\\
\end{array}
\end{equation}
Then the following hold for some solution $\overline{x}$ to 
\eqref{e:uH98j3h-29p} and some solution 
$(\overline{v_1},\ldots,\overline{v_K})$ to \eqref{e:uH98j3h-29d}.
\begin{enumerate}
\item
\label{p:2i}
$x_n\weakly\overline{x}$ and 
$(\forall k\in\{1,\ldots,K\})$ $v_{k,n}\weakly\overline{v_k}$.
\item
\label{p:2i'}
Suppose that $A$ is uniformly monotone at 
$\overline{x}$. Then $x_{n}\to\overline{x}$.
\item
\label{p:2ii}
Suppose that, for some $l\in\{1,\ldots,K\}$, $B_l$ is couniformly 
monotone at $\overline{v_l}$. Then $v_{l,n}\to\overline{v_l}$.
\end{enumerate}
\end{proposition}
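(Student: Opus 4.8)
The plan is to exhibit Problem~\ref{prob:3} as the special case of Problem~\ref{prob:2} in which all the coupling operators stay in the first, purely set-valued, regime, and then to read off the conclusions from Proposition~\ref{p:1}. Explicitly, in Problem~\ref{prob:2} I would take $K_1=K_2=K$, $z=0$, $C=0$ (hence $\mu=0$), and, for every $k\in\{1,\ldots,K\}$, $\GG_k=\HH$, $L_k=\Id$, and $r_k=0$, leaving $A$, $(B_k)_{1\leq k\leq K}$, and $(S_k)_{1\leq k\leq K}$ untouched. The choice $K_1=K_2=K$ is \emph{forced}: since in Problem~\ref{prob:3} the inverses $S_k^{-1}$ are assumed only single-valued and strictly monotone --- not Lipschitzian --- no $S_k$ may be placed in the second or third regime of Problem~\ref{prob:2}, and each must instead be activated through its resolvent. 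The additional properties of $S_k^{-1}$ are irrelevant to the iteration and its convergence; they are needed only in Proposition~\ref{p:uH98j3h-30}.

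I would then verify the correspondence of the data. Because $\|L_k\|=\|\Id\|=1$ for every $k$ and the sole Lipschitzian single-valued operator $C$ vanishes, the constant in \eqref{e:uH98j3h-07c} collapses to $\beta=\sqrt{1+\sum_{k=1}^K\|L_k\|^2}=\sqrt{K+1}$, which is exactly the value governing the admissible ranges of $\varepsilon$ and $(\gamma_n)_{n\in\NN}$ in Proposition~\ref{p:2}. Substituting $L_k=\Id$, $r_k=0$, $z=0$, and $C=0$ into \eqref{e:uH98j3h-07p} turns the primal inclusion into $0\in A\overline{x}+\sum_{k=1}^K(B_k\infconv S_k)\overline{x}$, i.e.\ \eqref{e:uH98j3h-29p}, while the same substitutions turn \eqref{e:uH98j3h-07d} into \eqref{e:uH98j3h-29d} once the dual solution $(\overline{v_k})_{1\leq k\leq K}$ is identified with $(\overline{u_k})_{1\leq k\leq K}$; the assumed solvability of \eqref{e:uH98j3h-29p} is precisely the standing solvability hypothesis of Problem~\ref{prob:2}.

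Next comes the only step needing genuine care: checking that iteration \eqref{e:guad2012-10-28g} specializes to \eqref{e:kl74Gh9c6-01g}. With $K_1=K_2=K$ the second and third inner loops of \eqref{e:guad2012-10-28g} are empty, so only the first loop survives; there every $S_k$ is activated via $J_{\gamma_n S_k}$ and every $B_k$ via $J_{\gamma_n^{-1}B_k}$, matching the resolvent steps of \eqref{e:kl74Gh9c6-01g}. I would set the error sequences $a_{1,1,n}$, $c_{1,1,n}$, $(a_{2,k,n})_k$, and $(c_{2,k,n})_k$ to $0$ (trivially absolutely summable) while retaining $b_{1,1,n}$, $(b_{1,k+1,n})_k$, and $(b_{2,k,n})_k$, and then eliminate the auxiliary quantities $s_{1,1,n}$, $s_{1,k+1,n}$, $s_{2,k,n}$ by back-substitution; for instance, combining $x_{n+1}=x_n-s_{1,1,n}+q_{1,1,n}$ with $s_{1,1,n}=x_n-\gamma_n\sum_{k=1}^Kv_{k,n}$ and $q_{1,1,n}=p_{1,1,n}-\gamma_n\sum_{k=1}^Kp_{2,k,n}$ yields $x_{n+1}=p_{1,1,n}+\gamma_n\sum_{k=1}^K(v_{k,n}-p_{2,k,n})$, and the analogous telescopings reproduce the stated $v_{k,n+1}$ and $y_{k,n+1}$ recursions. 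This is routine but must be done term by term, tracking signs carefully.

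Finally, the three assertions follow by direct transcription. Claim~\ref{p:2i} is Proposition~\ref{p:1}\ref{p:1i}. Claim~\ref{p:2i'} is Proposition~\ref{p:1}\ref{p:1iii}, whose hypothesis ``$A$ or $C$ is uniformly monotone'' reduces here to ``$A$ is uniformly monotone'' because $C=0$. Claim~\ref{p:2ii} is Proposition~\ref{p:1}\ref{p:1iv}. There is no analogue of Proposition~\ref{p:1}\ref{p:1v}, precisely because $K_2=K$ leaves no operator in the third regime.
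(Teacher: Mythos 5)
Your proposal is correct and follows exactly the route of the paper's own (much terser) proof: specialize Problem~\ref{prob:2} with $K_1=K_2=K$, $z=0$, $C=0$, $\GG_k=\HH$, $L_k=\Id$, $r_k=0$, so that $\beta=\sqrt{K+1}$, check that \eqref{e:guad2012-10-28g} collapses to \eqref{e:kl74Gh9c6-01g}, and invoke Proposition~\ref{p:1}\ref{p:1i}, \ref{p:1iii}, and \ref{p:1iv}. The only difference is that you spell out the back-substitutions and the matching of the primal/dual inclusions, which the paper leaves implicit.
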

\begin{proof}
Problem~\ref{prob:3} is a special case of Problem~\ref{prob:2}
with $K_1=K_2=K$, $z=0$, $C=0$, $\mu=0$, $\beta=\sqrt{K+1}$, and 
$(\forall k\in\{1,\ldots,K\})$ $\GG_k=\HH$, $L_k=\Id$, and $r_k=0$.
In this context, \eqref{e:guad2012-10-28g} can be reduced to 
\eqref{e:kl74Gh9c6-01g}, and the claims therefore follow from
Proposition~\ref{p:1}.
\end{proof}

\begin{remark}
\label{r:kl74Gh9c6-02}
For brevity, we have presented an algorithm for solving
Problem~\ref{prob:3} in its general form. However,
if some of the operators $(S_k)_{1\leq k\leq K}$ or their inverses
are Lipschitzian, we can apply Proposition~\ref{p:1} with 
$K_1\neq K$ and/or $K_2\neq K$ to obtain a more efficient 
algorithm in which each Lipschitzian operator is used through an 
explicit step, rather than through its resolvent.
\end{remark}

\section{Multivariate structured convex minimization problems}
\label{sec:6}

We derive from Theorem~\ref{t:1} a primal-dual minimization
algorithm for multivariate convex minimization problems involving 
infimal convolutions and composite functions.

\begin{problem}
\label{prob:5}
Let $m$ and $K$ be strictly positive integers, let
$(\HH_i)_{1\leq i\leq m}$ and $(\GG_k)_{1\leq k\leq K}$ be real 
Hilbert spaces, let $(\mu_i)_{1\leq i\leq m}\in\RP^m$, and let 
$(\nu_k)_{1\leq i\leq K}\in\RPP^K$. For every $i\in\{1,\ldots,m\}$ 
and $k\in\{1,\ldots,K\}$, let $h_i\colon\HH_i\to\RR$ be convex and 
differentiable and such that $\nabla h_i$ is $\mu_i$-Lipschitzian, 
let $f_i\in\Gamma_0(\HH_i)$, let $g_k\in\Gamma_0(\GG_k)$, 
let $\ell_k\in\Gamma_0(\GG_k)$ be $1/\nu_k$-strongly convex, 
let $z_i\in\HH_i$, let $r_k\in\GG_k$, and let 
$L_{ki}\in\BL(\HH_i,\GG_k)$. Set
$\beta=\text{max}\Big\{\underset{1\leq i\leq m}{\text{max}}\mu_i,
\underset{1\leq k\leq K}{\text{max}}\nu_k\Big\}+
\sqrt{\lambda}>0$, where
$\lambda\in\left[\text{sup}_{\sum_{i=1}^m\|x_i\|^2\leq 1}
\sum_{k=1}^K\|\sum_{i=1}^mL_{ki}x_i\|^2,\pinf\right[$,
and assume that 
\begin{equation}
\label{e:5h8Njiq-21a}
(\forall i\in\{1,\ldots,m\})\quad
z_i\in\ran\bigg(\partial f_i+\sum_{k=1}^KL_{ki}^*\circ
(\partial g_k\infconv\partial\ell_k)
\circ\bigg(\sum_{j=1}^mL_{kj}\cdot-r_k\bigg)+\nabla h_i\bigg).
\end{equation}
Solve the primal problem
\begin{equation}
\label{e:5h8Njiq-23p}
\minimize{x_1\in\HH_1,\ldots,\,x_m\in\HH_m}{\sum_{i=1}^mf_i(x_i)
+\sum_{k=1}^K(g_k\infconv\ell_k)\bigg(\sum_{i=1}^m
L_{ki}x_i-r_k\bigg)+
\sum_{i=1}^m\big(h_i(x_i)-\scal{x_i}{z_i}\big)},
\end{equation}
together with the dual problem
\begin{equation}
\label{e:5h8Njiq-23d}
\minimize{v_1\in\GG_1,\ldots,\,v_K\in\GG_K}{\sum_{i=1}^m
\big(f_i^*\infconv h_i^*)\bigg(z_i-\sum_{k=1}^KL_{ki}^*v_k\bigg)
+\sum_{k=1}^K\big(g^*_k(v_k)+\ell^*_k(v_k)+\scal{v_k}{r_k}\big)}.
\end{equation}
\end{problem}

\begin{remark}
\label{r:3jqocv9k-12-09}
Problem~\ref{prob:5} extends significantly the multivariate 
minimization framework of \cite{Sico10,Nmtm09}. There, 
$(h_i)_{1\leq i\leq m}$ were the zero function,
$(\ell_k)_{1\leq k\leq K}$ were the function $\iota_{\{0\}}$,
and $(g_k)_{1\leq k\leq K}$ were differentiable 
everywhere with a Lipschitzian gradient. 
Finally, no dual problem was considered.
\end{remark}

\begin{proposition}
\label{p:3jqocv9k-12-10}
Consider the setting of Problem~\ref{prob:5}.
Suppose that \eqref{e:5h8Njiq-23p} has a solution, and set 
\begin{equation}
\label{e:cq}
E=\Menge{\bigg(\sum_{i=1}^mL_{ki}x_i-y_k\bigg)_{1\leq k\leq K}}
{
\begin{cases}
(\forall i\in\{1,\ldots,m\})\:\:x_i\in\dom f_i\\
(\forall k\in\{1,\ldots,K\})\:\:y_k\in
\dom g_k+\dom\ell_k
\end{cases}
}.
\end{equation}
Then \eqref{e:5h8Njiq-21a} is satisfied in each of the following
cases.
\begin{enumerate}
\item
\label{p:3jqocv9k-12-10i}
$(r_k)_{1\leq k\leq K}\in\sri E$.
\item
\label{p:3jqocv9k-12-10ii--}
$E-(r_k)_{1\leq k\leq K}$ is a closed vector subspace.
\item
\label{p:3jqocv9k-12-10ii-}
For every $i\in\{1,\ldots,m\}$, $f_i$ is real-valued and,
for every $k\in\{1,\ldots,K\}$, the operator
$\bigoplus_{j=1}^m\HH_j\to\GG_k\colon(x_j)_{1\leq j\leq m}\mapsto
\sum_{j=1}^mL_{kj}x_j$ is surjective.
\item
\label{p:3jqocv9k-12-10ii}
For every $k\in\{1,\ldots,K\}$, $g_k$ or $\ell_k$ is real-valued.
\item
\label{p:3jqocv9k-12-10iii}
$(\HH_i)_{1\leq i\leq m}$ and $(\GG_k)_{1\leq k\leq K}$ are 
finite-dimensional, and $(\forall i\in\{1,\dots,m\})
(\exi x_i\in\reli\dom f_i)(\forall k\in\{1,\ldots,K\})$
$\sum_{i=1}^mL_{ki}x_i-r_k\in\reli\dom g_k+\reli\dom\ell_k$.
\end{enumerate}
\end{proposition}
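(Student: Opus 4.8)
The plan is to read \eqref{e:5h8Njiq-21a} as the optimality system of the Fenchel--Rockafellar pair \eqref{e:5h8Njiq-23p}/\eqref{e:5h8Njiq-23d} and to extract its dual multipliers from a primal minimizer via the subdifferential sum and chain rules, whose validity is what each of the five conditions secures. First I would move to the product spaces $\HHH$ and $\GGG$ of \eqref{e:IJ834hj8fr-30}, set $f=\bigoplus_{i}f_i$, $h=\bigoplus_{i}h_i$, $g=\bigoplus_{k}g_k$, $\ell=\bigoplus_{k}\ell_k$, and $\boldsymbol L\colon\boldsymbol x\mapsto(\sum_{i}L_{ki}x_i)_{k}$, so that the primal objective in \eqref{e:5h8Njiq-23p} becomes
\[
\Phi=f+h-\scal{\cdot}{\boldsymbol z}+(g\infconv\ell)(\boldsymbol L\cdot-\boldsymbol r),
\]
with $g\infconv\ell$ acting coordinatewise. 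Writing $\Phi=\Psi-\scal{\cdot}{\boldsymbol z}$, a minimizer $\overline{\boldsymbol x}$ (which exists by hypothesis) satisfies $\boldsymbol z\in\partial\Psi(\overline{\boldsymbol x})$ by Fermat's rule; thus it is enough to show, in each case, that $\partial\Psi=\partial f+\nabla h+\boldsymbol L^*\bigl(\partial(g\infconv\ell)(\boldsymbol L\cdot-\boldsymbol r)\bigr)$, for then $\boldsymbol z\in\partial\Psi(\overline{\boldsymbol x})$ is exactly \eqref{e:5h8Njiq-21a}. Only the inclusion $\partial\Psi\subseteq\partial f+\nabla h+\boldsymbol L^*\partial(g\infconv\ell)(\boldsymbol L\cdot-\boldsymbol r)$ is at stake, the reverse one holding unconditionally.

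Two calculus facts feed this. The first is $\partial(g_k\infconv\ell_k)=\partial g_k\infconv\partial\ell_k$, matching the parallel sum in \eqref{e:5h8Njiq-21a}: since $\ell_k$ is $1/\nu_k$-strongly convex, $\ell_k^*$ is finite-valued with $\nu_k$-Lipschitzian gradient, so the sum rule gives $\partial(g_k^*+\ell_k^*)=\partial g_k^*+\nabla\ell_k^*$; combining this with $(g_k\infconv\ell_k)^*=g_k^*+\ell_k^*$ and the fact that $g_k\infconv\ell_k\in\Gamma_0(\GG_k)$ (strong convexity of $\ell_k$ makes the infimal convolution proper, lower semicontinuous, and exact), and inverting through conjugacy, yields the identity. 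The second is the sum-and-chain rule for $\partial\Psi$, whose governing qualification is
\[
(r_k)_{1\leq k\leq K}\in\sri E,\qquad E=\boldsymbol L(\dom f)-\dom(g\infconv\ell),
\]
where $\dom(g_k\infconv\ell_k)=\dom g_k+\dom\ell_k$, so that $E$ is the set of \eqref{e:cq}.

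It then remains to verify the qualification in each case. Case~\ref{p:3jqocv9k-12-10i} is the qualification itself. In case~\ref{p:3jqocv9k-12-10ii--}, $E-(r_k)_{k}$ being a closed subspace makes $E$ an affine subspace containing $(r_k)_{k}$, whence $\sri E=E\ni(r_k)_{k}$. In case~\ref{p:3jqocv9k-12-10ii}, $g_k$ or $\ell_k$ real-valued forces $\dom(g_k\infconv\ell_k)=\GG_k$, hence $E=\GGG$ and $\sri E=\GGG$. In case~\ref{p:3jqocv9k-12-10iii}, finite-dimensionality lets me replace $\sri$ by $\reli$, and the stated point with $x_i\in\reli\dom f_i$ and $\sum_iL_{ki}x_i-r_k\in\reli\dom g_k+\reli\dom\ell_k$ certifies $(r_k)_k\in\reli E$ through $\reli(\boldsymbol L(\dom f)-\dom(g\infconv\ell))=\boldsymbol L(\reli\dom f)-\reli\dom(g\infconv\ell)$. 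Case~\ref{p:3jqocv9k-12-10ii-} is the delicate one: here the strong-relative-interior condition may fail, so instead I would use that $f+h-\scal{\cdot}{\boldsymbol z}$ is real-valued and continuous, making its sum rule against the composite term unconditional, together with the surjectivity of each block $\boldsymbol x\mapsto\sum_jL_{kj}x_j$, which licenses the chain rule for each $(g_k\infconv\ell_k)(\sum_jL_{kj}\cdot-r_k)$; the resulting decomposition is then assembled via the dual form of the qualification. All invocations rest on the subdifferential-calculus and Fenchel--Rockafellar results of \cite{Livre1}.

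The main obstacle is precisely case~\ref{p:3jqocv9k-12-10ii-}. The clean route ``each hypothesis forces $(r_k)_k\in\sri E$'' succeeds for \ref{p:3jqocv9k-12-10i}, \ref{p:3jqocv9k-12-10ii--}, \ref{p:3jqocv9k-12-10ii}, and \ref{p:3jqocv9k-12-10iii}, but genuinely breaks down under \ref{p:3jqocv9k-12-10ii-}, where surjectivity of the individual coupling blocks need not make the full operator $\boldsymbol L$ surjective nor place $(r_k)_k$ in $\sri E$; that case must be driven by continuity of the ground function and a dual qualification rather than by a strong-relative-interior statement. A secondary point requiring care is the infimal-convolution identity in infinite dimensions, whose properness and lower semicontinuity hinge on the supercoercivity supplied by the strong convexity of $\ell_k$.
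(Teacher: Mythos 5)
Your overall strategy --- pass to the product spaces, identify \eqref{e:5h8Njiq-21a} with $\boldsymbol{z}\in\ran\partial\Psi$ via Fermat's rule, reduce everything to the subdifferential sum-and-chain rule under the qualification $(r_k)_{1\leq k\leq K}\in\sri E$, and use the strong convexity of the $\ell_k$ (hence $\dom\boldsymbol{\ell}^*=\GGG$) to obtain $\partial\boldsymbol{g}\infconv\partial\boldsymbol{\ell}=\partial(\boldsymbol{g}\infconv\boldsymbol{\ell})$ and $\boldsymbol{g}\infconv\boldsymbol{\ell}\in\Gamma_0(\GGG)$ --- is exactly the paper's, and your treatments of cases \ref{p:3jqocv9k-12-10i}, \ref{p:3jqocv9k-12-10ii--}, \ref{p:3jqocv9k-12-10ii}, and \ref{p:3jqocv9k-12-10iii} coincide with the paper's (closed subspace implies $\sri E=E$; real-valuedness of $g_k$ or $\ell_k$ gives $E=\GGG$; finite dimensions allow replacing $\sri$ by $\reli$ and using relative-interior calculus).

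The gap is case \ref{p:3jqocv9k-12-10ii-}. The paper disposes of it by the same reduction as the others: since each $f_i$ is real-valued, $\dom\boldsymbol{f}=\HHH$, and it asserts $\boldsymbol{L}(\dom\boldsymbol{f})=\boldsymbol{L}(\HHH)=\GGG$, whence $E=\GGG$ by \eqref{e:erice} and the qualification of case \ref{p:3jqocv9k-12-10i} holds trivially. You are right to be uneasy here: surjectivity of each block $T_k\colon(x_j)_{1\leq j\leq m}\mapsto\sum_{j}L_{kj}x_j$ does not by itself make the stacked operator $\boldsymbol{L}$ surjective (take $m=1$, $K=2$, $L_{11}=L_{21}=\Id$), so flagging this case for separate care is a sound instinct. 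But your substitute argument is not a proof. Continuity of $f+h-\scal{\cdot}{\boldsymbol{z}}$ handles the outer sum rule, and surjectivity of each $T_k$ licenses each individual chain rule, yet the step you defer to \emph{the dual form of the qualification} is precisely where the difficulty sits: splitting $\partial\bigl(\sum_{k=1}^K(g_k\infconv\ell_k)\circ(T_k\cdot-r_k)\bigr)$ into $\sum_{k=1}^KT_k^*\circ\partial(g_k\infconv\ell_k)\circ(T_k\cdot-r_k)$ is a sum rule for $K$ composite functions whose domains $T_k^{-1}(r_k+\dom g_k+\dom\ell_k)$ are in general proper subsets of $\HHH$, and it demands a joint qualification equivalent to $(r_k)_{1\leq k\leq K}\in\sri E$ --- the very condition you set out to bypass. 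So either you must show that the hypotheses of \ref{p:3jqocv9k-12-10ii-} do force $(r_k)_{1\leq k\leq K}\in\sri E$ (which is what the paper claims, via $\boldsymbol{L}(\HHH)=\GGG$), or you must carry out the announced dual argument explicitly; as written, case \ref{p:3jqocv9k-12-10ii-} is not established.
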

\begin{proof}
Define $\HHH$ and $\GGG$ as in \eqref{e:IJ834hj8fr-30},
and $\boldsymbol{L}$, $\boldsymbol{z}$, and $\boldsymbol{r}$ as 
in \eqref{e:5h8Njiq-12b}. Set
\begin{equation}
\label{e:uH98j3h-20d}
\begin{cases}
\boldsymbol{f}\colon\HHH\to\RX\colon\boldsymbol{x}\mapsto
\sum_{i=1}^mf_i(x_i)\quad\text{and}\quad
\boldsymbol{h}\colon\HHH\to\RR\colon\boldsymbol{x}\mapsto
\sum_{i=1}^mh_i(x_i),\\
\boldsymbol{g}\colon\GGG\to\RX\colon\boldsymbol{y}\mapsto
\sum_{k=1}^Kg_k(y_k)\quad\text{and}\quad
\boldsymbol{\ell}\colon\GGG\to\RX\colon\boldsymbol{y}\mapsto
\sum_{k=1}^K\ell_k(y_k).
\end{cases}
\end{equation}
Then \eqref{e:cq} and \cite[Proposition~12.6(ii)]{Livre1} yield
\begin{align}
E
&=\menge{\boldsymbol{L}\boldsymbol{x}-\boldsymbol{y}}
{\boldsymbol{x}\in\dom\boldsymbol{f}\;\text{and}\;\boldsymbol{y}
\in\dom\boldsymbol{g}+\dom\boldsymbol{\ell}}
\nonumber\\
&=\boldsymbol{L}\big(\dom\boldsymbol{f}\big)-
\big(\dom\boldsymbol{g}+\dom\boldsymbol{\ell}\big)
\label{e:erice}\\
&=\boldsymbol{L}\big(\dom(\boldsymbol{f}+\boldsymbol{h}-
\scal{\cdot}{\boldsymbol{z}})\big)-\dom\big(\boldsymbol{g}\infconv
\boldsymbol{\ell}\big).
\label{e:mam}
\end{align}

\ref{p:3jqocv9k-12-10i}:
Since the functions $(\ell_k)_{1\leq k\leq K}$ are strongly convex,
so is $\boldsymbol{\ell}$. Hence, $\dom\boldsymbol{\ell}^*=\GGG$ 
\cite[Propositions~11.16 and 14.15]{Livre1} and therefore 
\cite[Propositions~15.7(iv) and 24.27]{Livre1} imply that
$\partial \boldsymbol{g}\infconv\partial\boldsymbol{\ell}=
\partial(\boldsymbol{g}\infconv\boldsymbol{\ell})$ and 
$\boldsymbol{g}\infconv\boldsymbol{\ell}\in\Gamma_0(\GGG)$.
On the other hand, \eqref{e:mam} yields
$\boldsymbol{0}\in\sri(\boldsymbol{L}
(\dom(\boldsymbol{f}+\boldsymbol{h}-
\scal{\cdot}{\boldsymbol{z}}))-
\dom(\boldsymbol{g}\infconv\boldsymbol{\ell})
(\cdot-\boldsymbol{r}))$.
Thus, we derive from \cite[Theorem~16.37(i)]{Livre1} that
\begin{align}
\label{e:boardwalk}
\partial\boldsymbol{f}+\boldsymbol{L}^*\circ(\partial\boldsymbol{g}
\infconv\partial\boldsymbol{\ell}_k)\circ(\boldsymbol{L}\cdot-
\boldsymbol{r})+\nabla\boldsymbol{h}-\boldsymbol{z}
&=\partial\big(\boldsymbol{f}+\boldsymbol{h}-
\scal{\cdot}{\boldsymbol{z}}\big)+
\boldsymbol{L}^*\circ\partial(\boldsymbol{g}
\infconv\boldsymbol{\ell})\circ(\boldsymbol{L}\cdot-\boldsymbol{r})
\nonumber\\
&=\partial\big(\boldsymbol{f}+\boldsymbol{h}-
\scal{\cdot}{\boldsymbol{z}}+(\boldsymbol{g}
\infconv\boldsymbol{\ell})\circ(\boldsymbol{L}\cdot-
\boldsymbol{r})\big).
\end{align}
Since \eqref{e:5h8Njiq-23p} has a solution 
and is equivalent to minimizing $\boldsymbol{f}+\boldsymbol{h}-
\scal{\cdot}{\boldsymbol{z}}+(\boldsymbol{g}
\infconv\boldsymbol{\ell})\circ(\boldsymbol{L}\cdot-r)$ over 
$\HHH$, Fermat's rule \cite[Theorem~16.2]{Livre1} implies that 
$\boldsymbol{0}\in\ran\,\partial(\boldsymbol{f}+\boldsymbol{h}-
\scal{\cdot}{\boldsymbol{z}}+(\boldsymbol{g}\infconv
\boldsymbol{\ell})\circ(\boldsymbol{L}\cdot- \boldsymbol{r}))$. 
Hence \eqref{e:boardwalk} yields
$\boldsymbol{z}\in\ran(\partial\boldsymbol{f}+\boldsymbol{L}^*
\circ(\partial\boldsymbol{g}\infconv\partial\boldsymbol{\ell}_k)
\circ(\boldsymbol{L}\cdot-\boldsymbol{r})+\nabla\boldsymbol{h})$
and we conclude that \eqref{e:5h8Njiq-21a} is satisfied.

\ref{p:3jqocv9k-12-10ii--}$\Rightarrow$%
\ref{p:3jqocv9k-12-10i}: \cite[Proposition~6.19(i)]{Livre1}. 

\ref{p:3jqocv9k-12-10ii-}$\Rightarrow$%
\ref{p:3jqocv9k-12-10i}: We have 
$\boldsymbol{L}(\dom\boldsymbol{f})=\boldsymbol{L}(\HHH)=\GGG$. 
Hence, \eqref{e:erice} yields $E=\GGG$.

\ref{p:3jqocv9k-12-10ii}$\Rightarrow$%
\ref{p:3jqocv9k-12-10i}: We have
$\dom\boldsymbol{g}+\dom\boldsymbol{\ell}=\GGG$. Hence,
\eqref{e:erice} yields $E=\GGG$.

\ref{p:3jqocv9k-12-10iii}$\Rightarrow$%
\ref{p:3jqocv9k-12-10i}: Since $\text{dim}\,\GGG<\pinf$, 
$\sri E=\reli E$. On the other hand, by \eqref{e:erice} and 
\cite[Corollary~6.15]{Livre1},
\begin{equation}
\label{e:2011-07-08a}
\reli E=\reli\big(\boldsymbol{L}\big(\dom\boldsymbol{f}\big)-
\dom\boldsymbol{g}-\dom\boldsymbol{\ell}\big)
=\boldsymbol{L}\big(\reli\dom\boldsymbol{f}\big)-
\reli\dom\boldsymbol{g}-\reli\dom\boldsymbol{\ell}.
\end{equation}
Thus, $\boldsymbol{r}\in\sri E$ $\Leftrightarrow$
$(\exi\boldsymbol{x}\in\reli\dom\boldsymbol{f}=
\cart_{\!i=1}^{\!m}\reli\dom f_i)$
$\boldsymbol{L}\boldsymbol{x}-\boldsymbol{r}\in
\reli\dom\boldsymbol{g}+\reli\dom\boldsymbol{\ell}=
\cart_{\!k=1}^{\!K}(\reli\dom g_k+\reli\dom\ell_k)$.
\end{proof}

\begin{proposition}
\label{p:5}
Consider the setting of Problem~\ref{prob:5}.
For every $i\in\{1,\ldots,m\}$, let $(a_{1,i,n})_{n\in\NN}$,
$(b_{1,i,n})_{n\in\NN}$, and $(c_{1,i,n})_{n\in\NN}$ be 
absolutely summable sequences in $\HH_i$ and, for every 
$k\in\{1,\ldots,K\}$, let $(a_{2,k,n})_{n\in\NN}$, 
$(b_{2,k,n})_{n\in\NN}$, and $(c_{2,k,n})_{n\in\NN}$ be 
absolutely summable sequences in $\GG_k$. Furthermore, 
let $x_{1,0}\in\HH_1$, \ldots, $x_{m,0}\in\HH_m$, 
$v_{1,0}\in\GG_1$, \ldots, $v_{K,0}\in\GG_K$, let
$\varepsilon\in\left]0,1/(\beta+1)\right[$, 
let $(\gamma_n)_{n\in\NN}$ be a sequence in 
$[\varepsilon,(1-\varepsilon)/\beta]$, and set
\begin{equation}
\label{e:5h8Njiq-23b}
\begin{array}{l}
\text{For}\;n=0,1,\ldots\\
\left\lfloor
\begin{array}{l}
\text{For}\;i=1,\ldots,m\\
\left\lfloor
\begin{array}{l}
s_{1,i,n}=x_{i,n}-\gamma_n\big(\nabla h_i(x_{i,n})+
\sum_{k=1}^KL_{ki}^*v_{k,n}+a_{1,i,n}\big)\\[1mm]
p_{1,i,n}=\prox_{\gamma_n f_i}(s_{1,i,n}+\gamma_nz_i)
+b_{1,i,n}\\[1mm]
\end{array}
\right.\\[1mm]
\text{For}\;k=1,\ldots,K\\
\left\lfloor
\begin{array}{l}
s_{2,k,n}=v_{k,n}-\gamma_n\big(\nabla\ell_k^*(v_{k,n})-
\sum_{i=1}^mL_{ki}x_{i,n}+a_{2,k,n}\big)\\[2mm]
p_{2,k,n}=s_{2,k,n}-\gamma_n\big(r_k+\prox_{\gamma_n^{-1}g_k}
(\gamma_n^{-1}s_{2,k,n}-r_k)+b_{2,k,n}\big)\\[2mm]
q_{2,k,n}=p_{2,k,n}-\gamma_n\big(\nabla\ell_k^*(p_{2,k,n})-
\sum_{i=1}^mL_{ki}p_{1,i,n}+c_{2,k,n}\big)\\
v_{k,n+1}=v_{k,n}-s_{2,k,n}+q_{2,k,n}
\end{array}
\right.\\[1mm]
\text{For}\;i=1,\ldots,m\\
\left\lfloor
\begin{array}{l}
q_{1,i,n}=p_{1,i,n}-\gamma_n\big(\nabla h_i(p_{1,i,n})+
\sum_{k=1}^KL_{ki}^*p_{2,k,n}+c_{1,i,n}\big)\\
x_{i,n+1}=x_{i,n}-s_{1,i,n}+q_{1,i,n}.
\end{array}
\right.\\
\end{array}
\right.\\
\end{array}
\end{equation}
Then the following hold.
\begin{enumerate}
\item
\label{p:5i}
$(\forall i\!\in\!\{1,\ldots,m\})$
$\sum_{n\in\NN}\|x_{i,n}\!-p_{1,i,n}\|^2\!<\!\pinf$, and
$(\forall k\!\in\{1,\ldots,K\})$
$\sum_{n\in\NN}\|v_{k,n}\!-p_{2,k,n}\|^2\!<\!\pinf$.
\item
\label{p:5ii}
There exist a solution $(\overline{x_1},\ldots,\overline{x_m})$ 
to \eqref{e:5h8Njiq-23p} and a solution 
$(\overline{v_1},\ldots,\overline{v_K})$ to \eqref{e:5h8Njiq-23d} 
such that the following hold.
\begin{enumerate}
\item
\label{p:5iia}
$(\forall i\in\{1,\ldots,m\})$ $x_{i,n}\weakly\overline{x_i}$
~and~ $z_i-\sum_{k=1}^KL_{ki}^*\overline{v_k}\in
\partial f_i(\overline{x_i})+\nabla h_i(\overline{x_i})$.
\item
\label{p:5iib}
$(\forall k\in\{1,\ldots,K\})$ $v_{k,n}\weakly\overline{v_k}$
~and~ $\sum_{i=1}^mL_{ki}\overline{x_i}-r_k\in\partial 
g_k^*(\overline{v_k})+\nabla\ell_k^*(\overline{v_k})$.
\item 
\label{p:5iie}
Suppose that, for some $j\in\{1,\ldots,m\}$, $f_j$ or $h_j$ is 
uniformly convex at $\overline{x_j}$. Then 
$x_{j,n}\to\overline{x_j}$.
\item 
\label{p:5iif}
Suppose that, for some $l\in\{1,\ldots,K\}$, $g^*_l$ or $\ell^*_l$ 
is uniformly convex at $\overline{v_l}$. Then
$v_{l,n}\to\overline{v_l}$.
\end{enumerate}
\end{enumerate}
\end{proposition}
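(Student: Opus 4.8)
The plan is to recognize Problem~\ref{prob:5} as the instance of Problem~\ref{prob:1} obtained by setting, for every $i$ and $k$, $A_i=\partial f_i$, $C_i=\nabla h_i$, $B_k=\partial g_k$, and $D_k=\partial\ell_k$, and then to transcribe the conclusions of Theorem~\ref{t:1}. First I would check the structural hypotheses of Problem~\ref{prob:1}: $\partial f_i$ and $\partial g_k$ are maximally monotone, and $\nabla h_i$ is monotone and $\mu_i$-Lipschitzian. The one point requiring care is $D_k^{-1}$: since $\ell_k$ is $1/\nu_k$-strongly convex, $\ell_k^*$ is Fr\'echet differentiable with $\nu_k$-Lipschitzian gradient \cite{Livre1}, so $D_k^{-1}=(\partial\ell_k)^{-1}=\partial\ell_k^*=\nabla\ell_k^*$ is single-valued and $\nu_k$-Lipschitzian, as required. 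Using \eqref{e:prox2} we have $J_{\gamma_n A_i}=\prox_{\gamma_n f_i}$ and $J_{\gamma_n^{-1}B_k}=\prox_{\gamma_n^{-1}g_k}$; substituting these together with $C_i=\nabla h_i$ and $D_k^{-1}=\nabla\ell_k^*$ into \eqref{e:5h8Njiq-11b} turns it verbatim into \eqref{e:5h8Njiq-23b}.

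Next I would dispatch the existence hypothesis of Problem~\ref{prob:1}. With the above data the primal inclusion \eqref{e:IJ834hj8fr-24p} reads $z_i\in\partial f_i(\overline{x_i})+\sum_kL_{ki}^*((\partial g_k\infconv\partial\ell_k)(\sum_jL_{kj}\overline{x_j}-r_k))+\nabla h_i(\overline{x_i})$, so that the assumed range condition \eqref{e:5h8Njiq-21a} is precisely the statement that \eqref{e:IJ834hj8fr-24p} possesses a solution. Hence Theorem~\ref{t:1} applies and yields at once assertion \ref{p:5i} from \ref{t:1i}--\ref{t:1i'}, the weak convergence in \ref{p:5iia}--\ref{p:5iib} from \ref{t:1iic}--\ref{t:1iid}, and, after rewriting $B_k^{-1}=\partial g_k^*$ and $D_k^{-1}=\nabla\ell_k^*$, the inclusions displayed in \ref{p:5iia}--\ref{p:5iib} from \ref{t:1iia}--\ref{t:1iib}.

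The substantive step is to upgrade these monotone-inclusion limits to genuine minimizers of \eqref{e:5h8Njiq-23p} and \eqref{e:5h8Njiq-23d}, and the point is that no constraint qualification beyond \eqref{e:5h8Njiq-21a} is needed: only the trivial \emph{inclusion} direction of the subdifferential sum and chain rules is invoked. Working in the product space of \eqref{e:IJ834hj8fr-30} with $\boldsymbol{f}$, $\boldsymbol{g}$, $\boldsymbol{\ell}$, $\boldsymbol{h}$, $\boldsymbol{L}$, $\boldsymbol{z}$, $\boldsymbol{r}$ as in the proof of Proposition~\ref{p:3jqocv9k-12-10}, I would first record that, because $\ell_k^*$ is everywhere differentiable, $\partial g_k\infconv\partial\ell_k=\partial(g_k\infconv\ell_k)$ (equivalently $\partial(g_k^*+\ell_k^*)=\partial g_k^*+\nabla\ell_k^*$), so the coupling operator in \eqref{e:IJ834hj8fr-24p} is $\partial(\boldsymbol{g}\infconv\boldsymbol{\ell})$. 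Since $\partial\boldsymbol{f}+\boldsymbol{L}^*\circ\partial(\boldsymbol{g}\infconv\boldsymbol{\ell})\circ(\boldsymbol{L}\cdot-\boldsymbol{r})+\nabla\boldsymbol{h}-\boldsymbol{z}\subseteq\partial\Phi$, where $\Phi=\boldsymbol{f}+(\boldsymbol{g}\infconv\boldsymbol{\ell})\circ(\boldsymbol{L}\cdot-\boldsymbol{r})+\boldsymbol{h}-\scal{\cdot}{\boldsymbol{z}}$ is the primal objective of \eqref{e:5h8Njiq-23p}, the inclusion satisfied by $\overline{\boldsymbol{x}}=(\overline{x_i})_i$ forces $\boldsymbol{0}\in\partial\Phi(\overline{\boldsymbol{x}})$, and Fermat's rule makes $\overline{\boldsymbol{x}}$ a minimizer. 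For the dual I would use $(A_i+C_i)^{-1}=(\partial(f_i+h_i))^{-1}=\partial(f_i+h_i)^*=\partial(f_i^*\infconv h_i^*)$, the sum rule being valid since $h_i$ is finite and continuous, and then invoke the same easy inclusion to identify \eqref{e:IJ834hj8fr-24d} with $\boldsymbol{0}\in\partial\Psi(\overline{\boldsymbol{v}})$, $\Psi$ being the dual objective of \eqref{e:5h8Njiq-23d}.

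Finally, for the strong convergence claims I would translate the convex-analytic hypotheses into the operator hypotheses of Theorem~\ref{t:1}: uniform convexity of $f_j$ (resp.\ $h_j$) at $\overline{x_j}$ yields uniform monotonicity of $\partial f_j$ (resp.\ $\nabla h_j$) there, giving \ref{p:5iie} from \ref{t:1iie}; and uniform convexity of $g_l^*$ (resp.\ $\ell_l^*$) at $\overline{v_l}$ yields uniform monotonicity of $B_l^{-1}=\partial g_l^*$ (resp.\ $D_l^{-1}=\nabla\ell_l^*$), i.e.\ couniform monotonicity of $B_l$ (resp.\ $D_l$), giving \ref{p:5iif} from \ref{t:1iif}. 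The main obstacle is conceptual rather than computational: one must be certain that passing from the solved inclusion to the minimization problem loses nothing under the sole hypothesis \eqref{e:5h8Njiq-21a}, which is exactly why the argument is arranged to use only the qualification-free inclusion direction of the subdifferential calculus.
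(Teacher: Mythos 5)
Your proposal is correct and follows essentially the same route as the paper: the same identification $A_i=\partial f_i$, $C_i=\nabla h_i$, $B_k=\partial g_k$, $D_k=\partial\ell_k$, the same verification that $D_k^{-1}=\nabla\ell_k^*$ is $\nu_k$-Lipschitzian, and the same qualification-free one-way inclusion into the subdifferential of the primal and dual objectives followed by Fermat's rule (this is precisely the single $\Rightarrow$ in the paper's equivalence chains). The translation of uniform convexity into (co)uniform monotonicity for the strong convergence claims also matches the paper.
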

\begin{proof}
Set 
\begin{equation}
\label{e:uH98j3h-17a}
\begin{cases}
(\forall i\in\{1,\ldots,m\})\quad A_i=\partial f_i
\quad\text{and}\quad C_i=\nabla h_i\\
(\forall k\in\{1,\ldots,K\})\quad
B_k=\partial g_k\quad\text{and}\quad D_k=\partial\ell_k. 
\end{cases}
\end{equation}
It follows from \cite[Proposition~17.10]{Livre1} that the 
operators $(C_i)_{1\leq i\leq m}$ are monotone, and from 
\cite[Theorem~20.40]{Livre1} that the operators 
$(A_i)_{1\leq i\leq m}$, $(B_k)_{1\leq k\leq m}$, and 
$(D_k)_{1\leq k\leq K}$ are maximally monotone. Moreover, 
for every $k\in\{1,\ldots,K\}$, we derive from 
\cite[Corollary~13.33 and Theorem~18.15]{Livre1} that $\ell_k^*$ 
is Fr\'echet differentiable on $\GG_k$ and $\nabla\ell^*_k$ is
$\nu_k$-Lipschitzian, and from 
\cite[Corollary~16.24 and Proposition~17.26(i)]{Livre1} that 
$D_k^{-1}=(\partial\ell_k)^{-1}=\partial\ell_k^*=
\{\nabla\ell_k^*\}$. On the other hand, \eqref{e:5h8Njiq-21a}
implies that \eqref{e:IJ834hj8fr-24p} possesses a solution, and 
\eqref{e:prox2} implies that \eqref{e:5h8Njiq-23b} is a 
special case of \eqref{e:5h8Njiq-11b}. 
We also recall that the uniform convexity of a function 
$\varphi\in\Gamma_0(\HH)$ at $x\in\dom\partial\varphi$ implies the 
uniform monotonicity of $\partial\varphi$ at $x$ 
\cite[Section~3.4]{Zali02}. Altogether, the claims will follow at
once from Theorem~\ref{t:1} provided we show that, in the 
setting of \eqref{e:5h8Njiq-21a} and \eqref{e:uH98j3h-17a}, 
\eqref{e:IJ834hj8fr-24p} becomes 
\eqref{e:5h8Njiq-23p} and \eqref{e:IJ834hj8fr-24d} becomes 
\eqref{e:5h8Njiq-23d}. To this end, let us first observe that
since, for every $k\in\{1,\ldots,K\}$, $\dom\,\ell_k^*=\GG_k$, 
\cite[Proposition~24.27]{Livre1} yields 
\begin{equation}
\label{e:IJ834hj8fr-01a}
(\forall k\in\{1,\ldots,K\})\quad
B_k\infconv D_k=
\partial g_k\infconv\partial\ell_k=\partial(g_k\infconv\ell_k),
\end{equation}
while \cite[Corollaries~16.24 and 16.38(iii)]{Livre1} yield 
\begin{equation}
\label{e:IJ834hj8fr-02f}
(\forall k\in\{1,\ldots,K\})\quad B_k^{-1}+D_k^{-1}=
\partial g_k^*+\{\nabla\ell_k^*\}=\partial\big(g_k^*+\ell_k^*\big).
\end{equation}
Likewise, using \cite[Theorem~15.3]{Livre1}, we obtain
\begin{equation}
\label{e:bari3}
(\forall i\in\{1,\ldots,m\})\quad 
(A_i+C_i)^{-1}=(\partial f_i+\nabla h_i)^{-1}
=\big(\partial (f_i+h_i)\big)^{-1}=\partial (f_i+h_i)^*=
\partial (f_i^*\infconv h^*_i).
\end{equation}
Now let us define $\HHH$ and $\GGG$ as in \eqref{e:IJ834hj8fr-30},
$\boldsymbol{L}$, $\boldsymbol{z}$, and $\boldsymbol{r}$ as 
in \eqref{e:5h8Njiq-12b}, and $\boldsymbol{f}$, $\boldsymbol{h}$,
$\boldsymbol{g}$, and $\boldsymbol{\ell}$ as in 
\eqref{e:uH98j3h-20d}.
We derive from \eqref{e:uH98j3h-17a}, \eqref{e:IJ834hj8fr-01a}, 
\cite[Corollary~16.38(iii), Propositions~16.5(ii), 16.8, and 
17.26(i)]{Livre1}, and Fermat's rule \cite[Theorem~16.2]{Livre1}
that, for every $\boldsymbol{x}=(x_i)_{1\leq i\leq m}\in\HHH$,
\begin{align}
\label{e:2012-01-02-md3}
\boldsymbol{x}~\text{solves \eqref{e:IJ834hj8fr-24p}}
&\Leftrightarrow
(\forall i\in\{1,\ldots,m\})\quad 0\in\partial f_i(x_i)
+\sum_{k=1}^KL_{ki}^*\bigg(\partial(g_k\infconv\ell_k)
\bigg(\sum_{j=1}^mL_{kj}x_j-r_k\bigg)\bigg)\nonumber\\
&\quad\hskip 40mm +\nabla h_i(x_i)-z_i\nonumber\\
&\Leftrightarrow
\boldsymbol{0}\in\partial\boldsymbol{f}(\boldsymbol{x})
+\boldsymbol{L}^*\big(\partial(
\boldsymbol{g}\infconv\boldsymbol{\ell})
(\boldsymbol{L}\boldsymbol{x}-\boldsymbol{r})\big)
+\nabla(\boldsymbol{h}-\scal{\cdot}{\boldsymbol{z}})
(\boldsymbol{x})\nonumber\\
&\Rightarrow
\boldsymbol{0}\in\partial\Big(\boldsymbol{f}
+(\boldsymbol{g}\infconv\boldsymbol{\ell})\circ
\big(\boldsymbol{L}\cdot-\boldsymbol{r})\big)
+\boldsymbol{h}-\scal{\cdot}{\boldsymbol{z}}\Big)
(\boldsymbol{x})\nonumber\\
&\Leftrightarrow\boldsymbol{x}~\text{solves \eqref{e:5h8Njiq-23p}.}
\end{align}
Next, let $\boldsymbol{v}=(v_k)_{1\leq k\leq K}\in\GGG$.
Then we derive from \eqref{e:IJ834hj8fr-02f}, \eqref{e:bari3}, 
and the same subdifferential calculus rules as above that 
\begin{align}
\label{e:2012-01-02-md4}
{\boldsymbol v}~\text{solves \eqref{e:IJ834hj8fr-24d}}
&\Leftrightarrow (\forall k\in\{1,\ldots,K\})\quad 0
\in-\sum_{i=1}^mL_{ki}\bigg(\partial(f_i^*\infconv h_i^*)
\bigg(z_i-\sum_{l=1}^KL_{li}^*v_l\bigg)\bigg)\nonumber\\
&\hskip 46mm +\partial\big(g_k^*+\ell_k^*+\scal{\cdot}{r_k}\big)
(v_k)\nonumber\\
&\Leftrightarrow\boldsymbol{0}\in-\boldsymbol{L}
\big(\partial(\boldsymbol{f}^*\infconv \boldsymbol{h}^*)
(\boldsymbol{z}-\boldsymbol{L}^*\boldsymbol{v})\big)
+\partial\big(\boldsymbol{g}^*+\boldsymbol{\ell}^*+
\scal{\cdot}{\boldsymbol{r}}\big)(\boldsymbol{v})\nonumber\\
&\Rightarrow\boldsymbol{0}\in\partial\Big(
(\boldsymbol{f}^*\infconv\boldsymbol{h}^*)\circ
(\boldsymbol{z}-\boldsymbol{L}^*\cdot)
+\boldsymbol{g}^*+\boldsymbol{\ell}^*+
\scal{\cdot}{\boldsymbol{r}}\Big)(\boldsymbol{v})\nonumber\\
&\Leftrightarrow\boldsymbol{\boldsymbol{v}}~
\text{solves \eqref{e:5h8Njiq-23d}},
\end{align}
which completes the proof.
\end{proof}

\begin{remark}
Proposition~\ref{p:5} provides a framework that captures
and suggests extensions of multivariate and/or infimal convolution 
variational formulations found in areas such as partial 
differential equations \cite{Atto11}, machine learning 
\cite{Bach12}, and image recovery \cite{Jmiv11,Caij12,Setz11}.
\end{remark}

\section{Univariate structured convex minimization problems}
\label{sec:7}

Minimization problems involving a single primal variable can be
obtained by setting $m=1$ in Problem~\ref{prob:5}. However, this 
approach imposes that infimal convolutions be performed 
exclusively with strongly convex functions. We use a
different strategy relying on Proposition~\ref{p:1}, which leads
to a formulation allowing for infimal convolutions with general 
lower semicontinuous convex functions.

\begin{problem}
\label{prob:8}
Let $\HH$ be a real Hilbert space, let $K_1$, $K_2$, and $K$ be 
integers such that $0\leq K_1\leq K_2\leq K\geq 1$, let $z\in\HH$, 
let $f\in\Gamma_0(\HH)$, and let $h\colon\HH\to\RR$ be convex and 
differentiable and such that $\nabla h$ is $\mu$-Lipschitzian for 
some $\mu\in\RP$. For every integer $k\in\{1,\ldots,K\}$, let 
$\GG_k$ be a real Hilbert space, let $r_k\in\GG_k$, let 
$g_k\in\Gamma_0(\GG_k)$, let $\varphi_k\in\Gamma_0(\GG_k)$,
and let $L_k\in\BL(\HH,\GG_k)$; moreover, if $K_1+1\leq k\leq K_2$, 
$\varphi_k$ is differentiable on $\GG_k$ and such 
that $\nabla\varphi_k$ is $\beta_k$-Lipschitzian for some 
$\beta_k\in\RP$, and, if $K_2+1\leq k\leq K$, $\varphi_k$ is 
$1/\beta_k$-strongly convex for some $\beta_k\in\RPP$. Set
$\beta=\text{max}\big\{\mu,\beta_{K_1+1},\ldots,\beta_K\big\}+
\sqrt{1+\sum_{k=1}^K\|L_k\|^2}$, and assume that
\begin{equation}
\label{e:kl74Gh9c6-12a}
z\in\ran\Big(\partial f+\sum_{k=1}^KL_{k}^*\circ
(\partial g_k\infconv\partial\varphi_k)
\circ\big(L_{k}\cdot-r_k\big)+\nabla h\Big)
\end{equation}
and
\begin{equation}
\label{e:kl74Gh9c6-12x}
(\forall k\in\{1,\ldots,K_2\})\quad 
0\in\sri(\dom g^*_k-\dom\varphi_k^*).
\end{equation}
Solve the primal problem
\begin{equation}
\label{e:3jqocv9k-12-10p}
\minimize{x\in\HH}{f(x)+\sum_{k=1}^K\,(g_k\infconv\varphi_k)
(L_kx-r_k)+h(x)-\scal{x}{z}}, 
\end{equation}
together with the dual problem
\begin{equation}
\label{e:3jqocv9k-12-10d}
\minimize{v_1\in\GG_1,\ldots,v_K\in\GG_K}{
\big(f^*\infconv h^*\big)\bigg(z-\sum_{k=1}^KL_k^*v_k\bigg)
+\sum_{k=1}^m\big(g_k^*(v_k)+\varphi_k^*(v_k)+
\scal{v_k}{r_k}\big)}.
\end{equation}
\end{problem}

\begin{remark}
\label{r:ultima}
It follows from \eqref{e:kl74Gh9c6-12x} and
\cite[Propositions~11.16, 14.15, 15.7(i), and 24.27]{Livre1} that 
\begin{equation}
\label{e:l}
(\forall k\in\{1,\ldots,K\})\quad g_k\infconv\varphi_k
\in\Gamma_0(\GG_k)\quad\text{and}\quad
\partial g_k\infconv\partial\varphi_k=
\partial(g_k\infconv\varphi_k).
\end{equation}
Hence, using the same type of arguments as in the proof of
Proposition~\ref{p:3jqocv9k-12-10}, we can deduce similar
conditions for \eqref{e:kl74Gh9c6-12a} to hold, e.g., 
that \eqref{e:3jqocv9k-12-10p} have a solution and that
$(r_k)_{1\leq k\leq K}$ lie in the strong relative interior 
of $\menge{(L_{k}x-y_k)_{1\leq k\leq K}}
{x\in\dom f\;\text{and}\;
(\forall k\in\{1,\ldots,K\})\:\:y_k\in
\dom g_k+\dom\varphi_k}$.
\end{remark}

\begin{proposition}
\label{p:8}
Consider the setting of Problem~\ref{prob:8}.
Let $(a_{1,1,n})_{n\in\NN}$, $(b_{1,1,n})_{n\in\NN}$, and 
$(c_{1,1,n})_{n\in\NN}$ be absolutely summable sequences in $\HH$.
For every integer $k\in\{1,\ldots,K\}$, let 
$(a_{2,k,n})_{n\in\NN}$, $(b_{2,k,n})_{n\in\NN}$, and 
$(c_{2,k,n})_{n\in\NN}$ be absolutely summable sequences in 
$\GG_k$; moreover, if $1\leq k\leq K_1$, 
let $(b_{1,k+1,n})_{n\in\NN}$ be 
an absolutely summable sequence in $\GG_k$, and, if
$K_1+1\leq k\leq K_2$\,, let $(a_{1,k+1,n})_{n\in\NN}$ 
and $(c_{1,k+1,n})_{n\in\NN}$ be absolutely summable sequences in 
$\GG_k$. Let $x_{0}\in\HH$, $y_{1,0}\in\GG_1$, 
\ldots, $y_{K_2,0}\in\GG_{K_2}$, $v_{1,0}\in\GG_1$, \ldots, and
$v_{K,0}\in\GG_K$, let $\varepsilon\in\left]0,1/(\beta+1)\right[$,
let $(\gamma_n)_{n\in\NN}$ be a sequence in 
$[\varepsilon,(1-\varepsilon)/\beta]$, and set
\begin{equation}
\label{e:3jqocv9k-12-10g}
\begin{array}{l}
\text{For}\;n=0,1,\ldots\\
\left\lfloor
\begin{array}{l}
s_{1,1,n}=x_{n}-\gamma_n\big(\nabla h(x_n)+
\sum_{k=1}^KL_k^*v_{k,n}+a_{1,1,n}\big)\\
p_{1,1,n}=\prox_{\gamma_n f}(s_{1,1,n}+\gamma_nz)+b_{1,1,n}\\
\text{If}\;K_1\neq 0,\;\text{for}\;k=1,\ldots,K_1\\
\left\lfloor
\begin{array}{l}
s_{1,k+1,n}=y_{k,n}+\gamma_nv_{k,n}\\
p_{1,k+1,n}=\prox_{\gamma_n\varphi_k}s_{1,k+1,n}+b_{1,k+1,n}\\
s_{2,k,n}=v_{k,n}-\gamma_n\big(y_{k,n}-L_kx_n+a_{2,k,n}\big)\\
p_{2,k,n}=s_{2,k,n}-\gamma_n\big(r_k+\prox_{\gamma_n^{-1}g_k}
(\gamma_n^{-1}s_{2,k,n}-r_k)+b_{2,k,n}\big)\\
q_{2,k,n}=p_{2,k,n}-\gamma_n\big(p_{1,k+1,n}-L_kp_{1,1,n}
+c_{2,k,n}\big)\\
v_{k,n+1}=v_{k,n}-s_{2,k,n}+q_{2,k,n}
\end{array}
\right.\\[1mm]
\text{If}\;K_1\neq K_2,\;\text{for}\;k=K_1+1,\ldots,K_2\\
\left\lfloor
\begin{array}{l}
s_{1,k+1,n}=y_{k,n}-\gamma_n\big(\nabla\varphi_k(y_{k,n})-v_{k,n}
+a_{1,k+1,n}\big)\\
p_{1,k+1,n}=s_{1,k+1,n}\\
s_{2,k,n}=v_{k,n}-\gamma_n\big(y_{k,n}-L_kx_n
+a_{2,k,n}\big)\\
p_{2,k,n}=s_{2,k,n}-\gamma_n\big(r_k+\prox_{\gamma_n^{-1}g_k}
(\gamma_n^{-1}s_{2,k,n}-r_k)+b_{2,k,n}\big)\\
q_{2,k,n}=p_{2,k,n}-\gamma_n\big(p_{1,k+1,n}-L_kp_{1,1,n}
+c_{2,k,n}\big)\\
v_{k,n+1}=v_{k,n}-s_{2,k,n}+q_{2,k,n}
\end{array}
\right.\\[1mm]
\text{If}\;K_2\neq K,\;\text{for}\;k=K_2+1,\ldots,K\\
\left\lfloor
\begin{array}{l}
s_{2,k,n}=v_{k,n}-\gamma_n\big(\nabla\varphi_k^*(v_{k,n})-L_kx_n
+a_{2,k,n}\big)\\
p_{2,k,n}=s_{2,k,n}-\gamma_n\big(r_k+\prox_{\gamma_n^{-1}g_k}
(\gamma_n^{-1}s_{2,k,n}-r_k)+b_{2,k,n}\big)\\
q_{2,k,n}=p_{2,k,n}-\gamma_n\big(\nabla\varphi_k^*
(p_{2,k,n})-L_kp_{1,1,n}+c_{2,k,n}\big)\\
v_{k,n+1}=v_{k,n}-s_{2,k,n}+q_{2,k,n}
\end{array}
\right.\\[1mm]
q_{1,1,n}=p_{1,1,n}-\gamma_n\big(\nabla h(p_{1,1,n})+
\sum_{k=1}^KL_k^*p_{2,k,n}+c_{1,1,n}\big)\\
x_{n+1}=x_{n}-s_{1,1,n}+q_{1,1,n}\\
\text{If}\;K_1\neq 0,\;\text{for}\;k=1,\ldots,K_1\\
\left\lfloor
\begin{array}{l}
q_{1,k+1,n}=p_{1,k+1,n}+\gamma_np_{2,k,n}\\
y_{k,n+1}=y_{k,n}-s_{1,k+1,n}+q_{1,k+1,n}
\end{array}
\right.\\
\text{If}\;K_1\neq K_2,\;\text{for}\;k=K_1+1,\ldots,K_2\\
\left\lfloor
\begin{array}{l}
q_{1,k+1,n}=p_{1,k+1,n}-\gamma_n\big(\nabla\varphi_k(p_{1,k+1,n})
-p_{2,k,n}+c_{1,k+1,n}\big)\\
y_{k,n+1}=y_{k,n}-s_{1,k+1,n}+q_{1,k+1,n}.
\end{array}
\right.\\
\end{array}
\right.\\
\end{array}
\end{equation}
Then the following hold for some solution $\overline{x}$ to 
\eqref{e:3jqocv9k-12-10p} and some solution 
$(\overline{v_1},\ldots,\overline{v_K})$ to 
\eqref{e:3jqocv9k-12-10d}.
\begin{enumerate}
\item
\label{p:8i}
$x_n\weakly\overline{x}~$ and 
$~(\forall k\in\{1,\ldots,K\})$ $v_{k,n}\weakly\overline{v_k}$.
\item 
\label{p:8iii}
Suppose that $f$ or $h$ is uniformly convex at 
$\overline{x}$. Then $x_{n}\to\overline{x}$.
\item 
\label{p:8iv}
Suppose that, for some $l\in\{1,\ldots,K\}$, $g_l^*$ 
is uniformly convex at $\overline{v_l}$. Then
$v_{l,n}\to\overline{v_l}$.
\item 
\label{p:8v}
Suppose that $K_2\neq K$ and that, for some 
$l\in\{K_2+1,\ldots,K\}$, $\varphi_l^*$ is uniformly convex at 
$\overline{v_l}$. Then $v_{l,n}\to\overline{v_l}$.
\end{enumerate}
\end{proposition}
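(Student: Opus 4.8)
The plan is to realize Problem~\ref{prob:8} as the instance of Problem~\ref{prob:2} obtained by setting $A=\partial f$, $C=\nabla h$, and, for every $k\in\{1,\ldots,K\}$, $B_k=\partial g_k$ and $S_k=\partial\varphi_k$; the conclusions will then follow from Proposition~\ref{p:1}. First I would verify that these operators meet the standing hypotheses of Problem~\ref{prob:2}. By \cite[Theorem~20.40]{Livre1} the operators $\partial f$, $\partial g_k$, and $\partial\varphi_k$ are maximally monotone, and by \cite[Proposition~17.10]{Livre1} $C=\nabla h$ is monotone, hence monotone and $\mu$-Lipschitzian. For $K_1+1\leq k\leq K_2$, $\varphi_k$ is differentiable with $\beta_k$-Lipschitzian gradient, so $S_k=\nabla\varphi_k$ is single-valued and $\beta_k$-Lipschitzian. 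For $K_2+1\leq k\leq K$, the $1/\beta_k$-strong convexity of $\varphi_k$ yields, via \cite[Corollary~13.33 and Theorem~18.15]{Livre1}, that $\varphi_k^*$ is Fr\'echet differentiable with $\beta_k$-Lipschitzian gradient, while \cite[Corollary~16.24 and Proposition~17.26(i)]{Livre1} give $S_k^{-1}=(\partial\varphi_k)^{-1}=\partial\varphi_k^*=\{\nabla\varphi_k^*\}$, so $S_k^{-1}$ is single-valued and $\beta_k$-Lipschitzian. The constant $\beta$ of Problem~\ref{prob:8} then coincides with that of Problem~\ref{prob:2}.

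Next I would translate the algorithm. Using the identity $\prox_\varphi=J_{\partial\varphi}$ of \eqref{e:prox2}, the resolvents $J_{\gamma_n A}$, $J_{\gamma_n S_k}$ (for $1\leq k\leq K_1$), and $J_{\gamma_n^{-1}B_k}$ appearing in \eqref{e:guad2012-10-28g} become $\prox_{\gamma_n f}$, $\prox_{\gamma_n\varphi_k}$, and $\prox_{\gamma_n^{-1}g_k}$, respectively, while the explicit evaluations of $C$, of $S_k$ (for $K_1+1\leq k\leq K_2$), and of $S_k^{-1}$ (for $K_2+1\leq k\leq K$) become those of $\nabla h$, $\nabla\varphi_k$, and $\nabla\varphi_k^*$. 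Thus \eqref{e:guad2012-10-28g} reduces term by term to \eqref{e:3jqocv9k-12-10g}.

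The crux is to identify the two monotone inclusions of Problem~\ref{prob:2} with the two minimization problems of Problem~\ref{prob:8}. By Remark~\ref{r:ultima}, condition \eqref{e:kl74Gh9c6-12x} (supplemented by the strong convexity of $\varphi_k$ for $K_2+1\leq k\leq K$, which forces $\dom\varphi_k^*=\GG_k$) guarantees that $B_k\infconv S_k=\partial g_k\infconv\partial\varphi_k=\partial(g_k\infconv\varphi_k)\in\Gamma_0(\GG_k)$ for every $k$. Consequently the primal inclusion \eqref{e:uH98j3h-07p} reads $z\in\partial f(\overline{x})+\sum_{k=1}^KL_k^*(\partial(g_k\infconv\varphi_k)(L_k\overline{x}-r_k))+\nabla h(\overline{x})$, which is precisely the solvability assumption \eqref{e:kl74Gh9c6-12a}; hence the hypotheses of Proposition~\ref{p:1} are met. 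To pass from the inclusion to the minimization I would follow the subdifferential-calculus argument in the proof of Proposition~\ref{p:5} (see \eqref{e:2012-01-02-md3}--\eqref{e:2012-01-02-md4}): the sum rules \cite[Corollary~16.38(iii) and Proposition~16.8]{Livre1} together with Fermat's rule \cite[Theorem~16.2]{Livre1} show that any solution of \eqref{e:uH98j3h-07p} is a minimizer in \eqref{e:3jqocv9k-12-10p}, and the parallel analysis using $(A+C)^{-1}=(\partial(f+h))^{-1}=\partial(f^*\infconv h^*)$ (via \cite[Theorem~15.3]{Livre1}) turns the dual inclusion \eqref{e:uH98j3h-07d} into the dual minimization \eqref{e:3jqocv9k-12-10d}. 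This is the step I expect to require the most care, since the subdifferential sum rule is only an inclusion in the absence of a further qualification; as in Proposition~\ref{p:5}, one needs only the forward implication, because Proposition~\ref{p:1} already delivers genuine solutions of the inclusions.

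Finally I would record the convergence statements. Assertion~\ref{p:8i} is Proposition~\ref{p:1}\ref{p:1i}. For the strong-convergence claims I would invoke the fact that uniform convexity of a function in $\Gamma_0$ at a point implies uniform monotonicity of its subdifferential there \cite[Section~3.4]{Zali02}: uniform convexity of $f$ or $h$ at $\overline{x}$ renders $A=\partial f$ or $C=\nabla h$ uniformly monotone at $\overline{x}$, so \ref{p:8iii} follows from Proposition~\ref{p:1}\ref{p:1iii}; uniform convexity of $g_l^*$ at $\overline{v_l}$ renders $\partial g_l^*=B_l^{-1}$ uniformly monotone there, i.e. $B_l$ couniformly monotone at $\overline{v_l}$, giving \ref{p:8iv} via Proposition~\ref{p:1}\ref{p:1iv}; and for $l\in\{K_2+1,\ldots,K\}$, uniform convexity of $\varphi_l^*$ at $\overline{v_l}$ renders $\partial\varphi_l^*=S_l^{-1}$ uniformly monotone there, i.e. $S_l$ couniformly monotone, yielding \ref{p:8v} via Proposition~\ref{p:1}\ref{p:1v}.
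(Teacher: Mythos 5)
Your proposal is correct and follows exactly the paper's route: the paper's own (very terse) proof likewise identifies Problem~\ref{prob:8} as the instance of Problem~\ref{prob:2} with $A=\partial f$, $C=\nabla h$, $B_k=\partial g_k$, $S_k=\partial\varphi_k$, invokes \eqref{e:l} and the subdifferential-calculus arguments of Proposition~\ref{p:5} to pass between the inclusions and the minimization problems, and concludes via \eqref{e:prox2} and Proposition~\ref{p:1}. You have simply spelled out the details the paper leaves implicit, including the correct observation that only the forward (inclusion $\Rightarrow$ minimizer) implication of the sum rule is needed.
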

\begin{proof}
Using \eqref{e:l} and the same arguments as in the proof of 
Proposition~\ref{p:5}, we first identify Problem~\ref{prob:8} 
as a special case of Problem~\ref{prob:2} with $A=\partial f$, 
$C=\nabla h$, and $(\forall k\in\{1,\ldots,K\})$
$B_k=\partial g_k$ and $S_k=\partial\varphi_k$. Using 
\eqref{e:prox2}, we then deduce the results from 
Proposition~\ref{p:1}.
\end{proof}

We conclude this section with an application to the approximation
of inconsistent convex feasibility problems where, for the sake
of brevity, we discuss only the primal problem.

\begin{example}
\label{p:kl74Gh9c6-23}
In Problem~\ref{prob:8}, set $K_1=K_2=K$, $z=0$, $h=0$, $f=0$, and,
for every $k\in\{1,\ldots,K\}$ $r_k=0$ and $g_k=\iota_{C_k}$, 
where $C_k$ is a nonempty closed convex subset of $\GG_k$ with 
projection operator $P_k$. In addition, suppose that 
\begin{equation}
\label{e:kl74Gh9c6-12y}
(\forall k\in\{1,\ldots,K\})\quad\text{Argmin}\,\varphi_k=\{0\},
\;\varphi_k(0)=0,\;\;\text{and}\;\;
0\in\sri(\dom \iota^*_{C_k}-\dom\varphi_k^*).
\end{equation}
It follows from \cite[Proposition~15.7(i)]{Livre1} that the infimal 
convolutions $(\iota_{C_k}\infconv\varphi_k)_{1\leq k\leq K}$ are 
exact. Hence, \eqref{e:3jqocv9k-12-10p} becomes
\begin{equation}
\label{e:kl74Gh9c6-23p}
\minimize{x\in\HH}{\sum_{k=1}^{K}\underset{y_k\in C_k}{\text{min}}
\varphi_k(L_kx-y_k)},
\end{equation}
and it is assumed to have at least one solution. We can interpret
\eqref{e:kl74Gh9c6-23p} as a relaxation of the (possibly
inconsistent) convex feasibility problem
\begin{equation}
\label{e:kl74Gh9c6-23f}
\text{find}\;\;\overline{x}\in\HH\;\;\text{such that}\;\;
(\forall k\in\{1,\ldots,K\})\quad L_k\overline{x}\in C_k.
\end{equation}
Indeed, it follows from \eqref{e:kl74Gh9c6-12y} that, if 
\eqref{e:kl74Gh9c6-23f} is consistent, then its solutions coincide 
with those of \eqref{e:kl74Gh9c6-23p}.
Furthermore, in view of \eqref{e:prox2}, Algorithm 
\eqref{e:3jqocv9k-12-10g} can be written as
\begin{equation}
\label{e:3jqocv9k-12-10j}
\begin{array}{l}
\text{For}\;n=0,1,\ldots\\
\left\lfloor
\begin{array}{l}
p_{1,1,n}=x_{n}-\gamma_n\big(
\sum_{k=1}^KL_k^*v_{k,n}+a_{1,1,n}\big)\\
\text{For}\;k=1,\ldots,K\\
\left\lfloor
\begin{array}{l}
s_{1,k+1,n}=y_{k,n}+\gamma_nv_{k,n}\\
p_{1,k+1,n}=\prox_{\gamma_n \varphi_k}s_{1,k+1,n}+b_{1,k+1,n}\\
s_{2,k,n}=v_{k,n}-\gamma_n\big(y_{k,n}-L_kx_n+a_{2,k,n}\big)\\
p_{2,k,n}=s_{2,k,n}-\gamma_n\big(P_{k}
(\gamma_n^{-1}s_{2,k,n})+b_{2,k,n}\big)\\
q_{2,k,n}=p_{2,k,n}-\gamma_n\big(p_{1,k+1,n}-L_kp_{1,1,n}
+c_{2,k,n}\big)\\
v_{k,n+1}=v_{k,n}-s_{2,k,n}+q_{2,k,n}
\end{array}
\right.\\[1mm]
q_{1,1,n}=p_{1,1,n}-\gamma_n\big(
\sum_{k=1}^KL_k^*p_{2,k,n}+c_{1,1,n}\big)\\
x_{n+1}=x_{n}-p_{1,1,n}+q_{1,1,n}\\
\text{For}\;k=1,\ldots,K\\
\left\lfloor
\begin{array}{l}
q_{1,k+1,n}=p_{1,k+1,n}+\gamma_np_{2,k,n}\\
y_{k,n+1}=y_{k,n}-s_{1,k+1,n}+q_{1,k+1,n}.
\end{array}
\right.\\
\end{array}
\right.\\
\end{array}
\end{equation}
By Proposition~\ref{p:8}\ref{p:8i}, $(x_n)_{n\in\NN}$ converges 
weakly to a solution to \eqref{e:kl74Gh9c6-23p} if
$\text{inf}_{n\in\NN}\,\gamma_n>0$ and 
$\text{sup}_{n\in\NN}\,\gamma_n<
\big({1+\sum_{k=1}^K\|L_k\|^2}\big)^{-1/2}$.
Now suppose that, for every $k\in\{1,\ldots,K\}$, 
$\GG_k=\HH$, $L_k=\Id$, $\varphi_k=\iota_{\{0\}}$ if $k=1$, and
$\varphi_k=\omega_k\|\cdot\|^2$, where $\omega_k\in\RPP$, if
$k\neq 1$. Then \eqref{e:kl74Gh9c6-23f} reduces to the 
feasibility problem of finding $\overline{x}\in\bigcap_{k=1}^KC_k$
and \eqref{e:kl74Gh9c6-23p} reduces to the 
constrained least-squares relaxation studied in 
\cite{Sign99}, namely,
$\minimize{x\in C_1}{\sum_{k=2}^{K}\omega_kd_{C_k}^2(x)}$.
\end{example}

\end{document}